\newtheorem{remark}{Remark}[section]
\def\Ome{{\Omega}}
\def\reff#1{\eqref{#1}}
\def\Norm#1#2{\left\|\,#1\,\right\|_{#2}}
\def\no{{\nonumber}}
\def\div{{\mbox{\rm div\,}}}
\def\Ome{\Omega}
\newcommand{\ow}{\hat{\varpi}}
\def\bv{\mathbf{v}}
\newcommand{\op}{\hat{p}}
\newcommand{\ot}{\hat{T}}
\newcommand{\bt}{\boldsymbol{\Theta}}
\def\bn{\mathbf{n}}
\def\bu{\mathbf{u}}
\def\bw{\mathbf{w}}
\def\bn{\mathbf{n}}
\def\bV{\mathbf{V}}
\def\bV{\mathbf{V}}
\begin{document}

%    \ifpdf
%    \DeclareGraphicsExtensions{.pdf,.jpg,.tif,.png}
%    \else
%    \DeclareGraphicsExtensions{,.jpg}
%    \fi

%\title{Multiphysics Finite Element Methods for a Poroelasticity Model}
\title{A new mixed finite element method for arbitrary element pair for a quasi-static nonlinear permeability thermo-poroelasticity model\footnote{Last update: \today}}

\author{
%\and	
Zhihao Ge\thanks{School of Mathematics and Statistics, Henan University, Kaifeng 475004, P.R. China ({\tt zhihaoge@henu.edu.cn}).
	The work of this author was supported by the National Natural Science Foundation of China under grant No.12371393 and 11971150.}
\and
Wenshuai Hu\thanks{School of Mathematics and Statistics, Henan University, Kaifeng 475004, P.R. China.}
}

%%%
\maketitle

%\vspace{-1.4in}
%\slugger{sinum}{200x}{xx}{x}{xxx--xxx}
%\vspace{1.1in}

\setcounter{page}{1}

%\begin{PII}
%S00000000000
%\end{PII}

%\large
\begin{abstract}
In this paper, we develop a multiphysics finite element method for solving the quasi-static thermo-poroelasticity model with nonlinear permeability. The model involves multiple physical processes such as deformation, pressure, diffusion and heat transfer. To reveal the multi-physical processes of
deformation, diffusion and heat transfer, we reformulate the original model into a fluid coupled problem that is general Stokes equation coupled with two reaction-diffusion equations. Then, we prove the existence and uniqueness of weak solution for the original problem by the $B$-operator technique and by sequence approximation for the reformulated problem. As for the reformulated problem we propose a fully discrete finite element method which can use arbitrary finite element pairs to solve the displacement $\bu$  pressure $\tau $ and variable $\varpi,\varsigma$, and the backward Euler method for time discretization. Finally, we give the stability analysis of the above proposed method, also we prove that the fully discrete multiphysics finite element method has an optimal convergence order. Numerical experiments show that the proposed method can achieve good results under different finite element pairs and are consistent with the theoretical analysis.
\end{abstract}
\begin{keywords}
Thermo-poroelasticity; nonlinear permeability;  mixed finite element method
\end{keywords}

\pagestyle{myheadings}
\thispagestyle{plain}
\markboth{ZHIHAO GE, WENSHUAI HU}{A NEW MFEM FOR A NONLINEAR THERMO-POROELASTICITY MODEL}

%%%%%%%%%%%%%%%%%%%%%%%%%%%%%%
\section{Introduction}
Thermo-poroelasticity is a branch of poroelasticity that describes the thermo-fluid-solid interaction and deals with the coupling between mechanical effects, fluid flow and heat transfer in porous media.This branch has attracted more attention in recent years due to its applications in various engineering and environmental fields, such as soil, medicine pills, polymer gels, mechanical, biomedical and chemical systems, one can refer to  \cite{ Hamley2007,Terzaghi1943,Schowalter2000,Tanaka1979,coussy2004}.
In these fields, the porous medium may experience deformations, fluid pressure changes and temperature variations due to external loading or fluid injection/withdrawal. To model these phenomena, different thermo-poroelasticity models have been proposed and studied. For example, Ge and Chen show the convergence order of variables under a linear condition  \cite{Chen2020}, Brun proves the well-posedness of the solution under a nonlinear condition with nonlinear thermal convection terms  \cite{Brun2021}, Zhang uses the standard Galerkin method  to analyze the fully coupled nonlinear thermo-poroelasticity model, gives the fully discrete scheme of the model, and proves the convergence and error estimation \cite{Zh2022}.
Brun  derives the linear form of the thermo-poroelasticity model from the energy conservation equation \cite{Brun2020}, which combines the momentum and mass equations. Luo  derives the specific expression of permeability by assuming that permeability, stress and pressure follow a negative exponential function \cite{Luo2016}, and analyzes the effects of stress and pressure. As for the  thermo-poroelasticity model, we further consider the nonlinear relationship between permeability and stress, pressure, temperature following a negative exponential function, and obtain the following  quasi-static thermo-poroelasticity model:
\begin{alignat}{2}\label{1-1} 
	-\mu\nabla^{2}\mathbf{u}-(\lambda+\mu)\nabla\mathrm{div}\mathbf{u}+ \alpha \nabla p +\beta \nabla T&=\mathbf{f}
	&&\qquad \mbox{in } \Omega_t:=\Omega\times (0,t_f)\\ %\subset \mathbf{\mathbb{R}}^d\times (0,t_f),\\
	(c_{0}p-b_0 T+\alpha \mathrm{div} \mathbf{u})_t - \mathrm{div} \left(k(\mathbf{u},p,T)\nabla p\right)  &=g &&\qquad \mbox{in } \Omega_t,\label{1-2}\\
	(a_0 T -b_0 p +\beta \mathrm{div} \bu)_{t}-\mathrm{div}(\bt \nabla T) &=\phi && \qquad \mbox{in } \Omega_t,\label{1-3}
\end{alignat}
where  $\Omega \subset \mathbb{R}^{d}(d=2,3)$  is a bounded polygonal domain with the boundary  $\partial \Omega$.   $  \partial_{t}$  is the derivative with respect to time,  $a_{0}$  is the effective thermal capacity,  $b_{0}$  is the thermal dilation coefficient, $ c_{0} $ is the specific storage coefficient, $ \alpha$  is the Biot-Willis constant, $\beta$  is thermal stress coefficient,  $k$ is the permeability and $ \bt=\left(\Theta_{i j}\right)_{i, j=1}^{d}$  is the effective thermal conductivity, $ \mu $ and $ \lambda$  are the Lam¨¦ parameters. The primary variables are the temperature distribution $ T $, displacement $ \mathbf{u}$  and fluid pressure $p $.

In a homogeneous and isotropic medium, where the permeability, viscosity and conductivity $k$ are constants. In this case, the well posedness and regularity of solution of problem \eqref{1-1}--\eqref{1-3} are studied in  \cite{Santos2021}. In this paper, we consider a permeability $k$ which depends on the dilation $\nabla \cdot \bu$, pressure $p$ and temperature $T$. For the convenience of analysis and calculation, we assume that the relationship satisfies a negative exponential function $k=k(\mathbf{u},p,T)=ak_{0}e^{-b \left((\lambda+\mu)\mathrm{div} \mathbf{u}-\alpha p- \beta T\right )}$, where $a$ is a mutation coefficient to account for the increase in conductivity of the material during fracture formation, $k_{0}$ is the initial conductivity, $b$ is
a coupling coefficient which reflects the influence of stress
on the coefficient of conductivity and $\nabla \cdot \bu$  represents the change in fluid content due to the local volume change.

We mainly encounter the following difficulties: The proof of the well-posedness of equations with nonlinear terms $k(\mathbf{u},p,T)\nabla p$. As for the linear case, we can obtain the existence, uniqueness and stability of solutions  \cite{Chen2020}, but nonlinear terms may cause energy to be non-conservative or grow unbounded  in the nonlinear case \cite{Lin2007}. Therefore, we need to find other methods to prove the well-posedness of  nonlinear equations or give the conditions and range for the well-posedness to hold. Then it is difficult for    the numerical solution. The original model is a complex system of multi-field coupling with nonlinear terms that is difficult to solve  directly. Multi-physics  coupling with the solid skeleton
behaving as an incompressible medium may cause locking phenomenon   \cite{Wheeler2007,Wheeler2009}, and complex system may cause matrix ill-conditioning, excessive computation or memory shortage. Therefore, we need to design suitable numerical schemes and algorithms to not only perform numerical stability and optimal error analysis but also  solve the model effectively. 

 This work has two main innovations: the first innovation is to consider the nonlinear situation of the permeability $k$ according to the changes of displacement, pressure, temperature which more matches the actual circumstances; the second innovation is to introduce 3 new variables to reformulate the original model, and by imposing restrictions on the parameter $\lambda$, avoid the occurrence of saddle point problems, so that any finite element pairs can be used for numerical solution of the variables in the reformulated model, avoiding numerical instability and locking phenomenon.

The rest of this paper is organized as follows. In Section \ref{sec-2}, we introduce the mathematical model of thermo-poroelasticity with nonlinear permeability, present the multiphysics reformulation of the model and its weak formulation. and prove the the existence and uniqueness of a weak solution for both the original problem and the reformulated problem. In Section \ref{sec-3}, we propose the fully discrete finite element method and prove its stability and convergence properties. In Section \ref{sec-4}, we provide some numerical results to illustrate the performance of the proposed method and the effect of nonlinear permeability on the thermo-poroelasticity problem. Finally, we conclude with some remarks.
	
\section{reformulation and PDE analysis}\label{sec-2}
\subsection{Preliminaries}
To close the \eqref{1-1}--\eqref{1-3}, we set the following boundary and initial conditions in this paper:
\begin{alignat}{2} \label{2-1}
	(\mu\nabla\mathbf{u}+(\lambda+\mu)\mathrm{div} \mathbf{u}I-\alpha pI-\beta T I) \mathbf{n} &= \mathbf{f}_1
	&&\qquad \mbox{on } \partial\Omega_t:=\partial\Omega\times (0,t_f),\\
	k(\mathbf{u},p,T)\nabla p\cdot \mathbf{n}
	&=g_1 &&\qquad \mbox{on } \partial\Omega_t,\label{2-2} \\
	\bt \nabla T \cdot \bn &= \phi_1 &&\qquad \mbox{on} ~\partial\Omega_t,\\
	\mathbf{u}=\mathbf{u}_0,\qquad p=p_0,\qquad T&=T_0&&\qquad \mbox{in } \Omega\times\{t=0\}. \label{2-3}
\end{alignat}
Introduce new variables
\[
q:=\mathrm{div} \mathbf{u},\quad \varpi:=c_{0}p-b_0 T+\alpha q,\quad 
\tau:=\alpha p -(\lambda+\mu) q+\beta T,\quad 
\varsigma:=a_0 T -b_0 p +\beta q.
\]
In some engineering literature, Lam\'e constant
$\mu$ is also called the {\em shear modulus} and denoted by $G$, and
$B:=\lambda +\frac23 G$ is called the {\em bulk modulus}. $\lambda,~\mu$ and $B$
are computed from the {\em Young's modulus} $E$ and the {\em Poisson ratio}
$\nu$ by the following formulas
\[
\lambda=\frac{E\nu}{(1+\nu)(1-2\nu)},\qquad \mu=G=\frac{E}{2(1+\nu)},\qquad
B=\frac{E}{3(1-2\nu)}.
\]
It is easy to check that
\begin{align}\label{2-4}
	T=\gamma_{1} \tau +\gamma_{2} \varpi+\gamma_{3} \varsigma,\qquad
	p=\gamma_4 \tau + \gamma_5 \varpi+\gamma_2 \varsigma,\qquad 
	q=-\gamma_6 \tau + \gamma_4 \varpi +\gamma_{1} \varsigma,
\end{align}
where 
\begin{align*}
	\centering
	&\gamma_{1}=\frac{\alpha \beta c_0+\alpha^2 b_0}{\mathcal{M}},
	\gamma_{2}=\frac{ab_0 (\lambda+\mu)-\alpha^2 \beta}{\mathcal{M}},
	\gamma_{3}=\frac{\alpha^3+\alpha c_0 (\lambda+\mu)}{\mathcal{M}},\\
	&\gamma_{4}=\frac{a_0 \alpha^2+\alpha \beta b_0}{\mathcal{M}},
	\gamma_{5}=\frac{a_0 \alpha (\lambda+\mu)+\alpha \beta^2}{\mathcal{M}},
	\gamma_{6}=\frac{\alpha c_0 a_0-\alpha b_0^{2}}{\mathcal{M}},\\
	&\mathcal{M}=\alpha c_{0} \beta^{2}+2 \alpha^{2} \beta b_{0}+a_{0} \alpha^{3}+\left(c_{0} a_{0} \alpha-b_{0}^{2} \alpha\right) (\lambda+\mu).
\end{align*}
Then the problem \eqref{1-1}--\eqref{1-3} can be rewritten as
\begin{alignat}{2} \label{2-5}
	-\mu\mathrm{div}\nabla\mathbf{u} + \nabla \tau &= \mathbf{f} &&\qquad \mbox{in } \Omega_t,\\
	\gamma_6\tau +\mathrm{div} \mathbf{u} &=\gamma_4\varpi+\gamma_{1} \varsigma &&\qquad \mbox{in } \Omega_t,\label{2-6}\\
	\varpi_{t} - \mathrm{div}( k(\tau)\nabla (\gamma_4 \tau + \gamma_5 \varpi+\gamma_2 \varsigma))&=g
	&&\qquad \mbox{in } \Omega_t,\label{2-7}\\
	\varsigma_{t}-\mathrm{div}  (\bt(\gamma_{1} \tau +\gamma_{2} \varpi+\gamma_{3} \varsigma)) &=\phi &&\qquad \mbox{in } \Omega_t.\label{2--7}
\end{alignat}
The boundary and initial conditions \eqref{2-1}--\eqref{2-3} can be rewritten as
\begin{alignat}{2} \label{2-8}
(\mu\nabla\mathbf{u}-\tau I) \mathbf{n} &= \mathbf{f}_1
&& \mbox{on } \partial\Omega_t,\\
k(\tau)\nabla(\gamma_4 \tau + \gamma_5 \varpi+\gamma_2 \varsigma)\cdot \mathbf{n}
&=g_1 && \mbox{on } \partial\Omega_t,\label{2-9} \\
\bt \nabla (\gamma_{1} \tau +\gamma_{2} \varpi+\gamma_{3} \varsigma)\cdot \bn &= \phi_1 && \mbox{on } \partial\Omega_t,\label{b}\\
\varpi=\varpi_0,\qquad \varsigma=\varsigma_0,\quad  && \mbox{in } \Omega\times\{t=0\}. \label{a}
\end{alignat}

To prove the existence and uniqueness of solution of the problem \eqref{1-1}--\eqref{1-3}, we need the following assumptions:

$\mathrm{A} 1$: We assume that the permeability $k(\cdot):(\mathbb{R}^d,\mathbb{R},\mathbb{R}) \rightarrow \mathbb{R}$ is continuous and there exist constants $k_m,k_M$ such that 
$
0 <k_m|\zeta|^{2}<k(\bu,p,T)\zeta^{\top}\zeta<k_M|\zeta|^{2},~ \forall  (\bu,p,T) \in (\mathbb{R}^d,\mathbb{R},\mathbb{R}).
$

A2:  $\boldsymbol{\Theta}: \mathbb{R}^{d} \rightarrow \mathbb{R}^{d \times d}$  is symmetric and uniformly positive definite in the sense that there exist positive constants  $\theta_{m}>0 $ and  $\theta_{M}>0 $ such that  $\theta_{m}|\zeta|^{2} \leq \zeta^{\top} \boldsymbol{\Theta}(x) \zeta \leq   \theta_{M}|\zeta|^{2},~\forall \zeta \in \mathbb{R}^{d} \backslash\{0\} $.

A3: The coefficients  $a_{0}, b_{0}, c_{0},$ $\alpha $ and  $\beta $ are non-negative constants, and  $c_{0}-b_{0}>0,  a_{0}-b_{0}>0$.

For convenience, we assume that $\mathbf{f},~ \mathbf{f}_1,~ \phi$
and $\phi_1$ all are independent of $t$ in the remaining of the paper. We note that all the results of this paper can be easily extended to the case of time-dependent source functions.
\begin{remark}
	From  \cite{Karel1982}, it is easy to verify that $k(\bu,p,T)$ is a  Nemytskii operator.
	According to the assumption condition A3, we can see that from \eqref{2-6} under the condition of limited $\lambda$, the divergence of displacement u is not equal to 0, which means that the first two equations of the reformulated poroelasticity equation system no longer have the characteristics of saddle point problem. Therefore, the selection of finite element pairs after reformulation does not need to follow the format requirements of stabilization, but can be flexibly chosen according to the actual situation. We will verify this in the following numerical experiments and show the difference of different finite element pairs on the numerical experiment results.
\end{remark}
\subsection{Definition of weak solution}
 The standard function space notation is adopted in this paper, their
precise definitions can be found in  \cite{bs08,cia,temam}.
In particular, $(\cdot,\cdot)$ and $\langle \cdot,\cdot\rangle$
denote respectively the standard $L^2(\Omega)$ and $L^2(\partial\Omega)$ inner products. For any Banach space $B$, we let $\mathbf{B}=[B]^d$, and use $\mathbf{B}^\prime$ to denote its dual space. In particular,  $\left\|\cdot\right\|_{L^p(B)}$ is a shorthand notation for
$\left\| \cdot\right\|_{L^p(0,t_f;B)}.$\\
We also introduce the function spaces
\begin{align*}
	&L^2_0(\Omega):=\{q\in L^2(\Omega);\, (q,1)=0\},\qquad \mathbf{X}:= \mathbf{H}^1(\Omega).
\end{align*}
From  \cite{temam}, it is well known  that the following  inf-sup condition holds in the space $\mathbf{X}\times L^2_0(\Omega)$:
\begin{align}\label{2-14}
	\sup_{\mathbf{v}\in \mathbf{X}}\frac{(\mathrm{div} \mathbf{v},\varphi)}{\left\|\mathbf{v}\right\|_{H^1(\Omega)}}
	\geq \alpha_0 \left\|\varphi\right\|_{L^2(\Omega)} \qquad \forall
	\varphi\in L^2_0(\Omega),\quad \alpha_0>0.
\end{align}
Let
\[
\mathbf{RM}:=\{\mathbf{r}:=\mathbf{a}+\mathbf{b} \times x;\,\mathbf{a},\mathbf{b}, x\in \mathbb{R}^3;\, \mathbf{r}:=\mathbf{a}+b(x_2-x_1)^t,\mathbf{a} \in \mathbb{R}^2,b\in \mathbb{R}\},
\]
denote the space of infinitesimal rigid motions. It is well known  \cite{bs08, gra, temam} that $\mathbf{RM}$ is the kernel of
the strain operator $\varepsilon$, that is, $\mathbf{r}\in \mathbf{RM}$ if and only if
$\varepsilon(\mathbf{r})=0$. Hence, we have
\begin{align}
	\varepsilon(\mathbf{r})=0,\quad \mathrm{div} \mathbf{r}=0 \qquad\forall \mathbf{r}\in \mathbf{RM}. 
\end{align}
Let $\mathbf{L}^2_\bot(\partial\Omega)$ and $\mathbf{H}^1_\bot(\Omega)$ denote respectively the subspace of $\mathbf{L}^2(\partial\Omega)$ and $\mathbf{H}^1(\Omega)$ which are orthogonal to $\mathbf{RM}$, that is,
\begin{align*}
	&\mathbf{H}^1_\bot(\Omega):=\{\mathbf{v}\in \mathbf{H}^1(\Omega);\, (\mathbf{v},\mathbf{r})=0\,\,\forall \mathbf{r}\in \mathbf{RM}\},
	\\
	&\mathbf{L}^2_\bot(\partial\Omega):=\{\mathbf{g}\in \mathbf{L}^2(\partial\Omega);\,\langle \mathbf{g},\mathbf{r}\rangle=0\,\,
	\forall \mathbf{r}\in \mathbf{RM} \}.
\end{align*}
It is well known  \cite{dautray} that there exists a constant $c_1>0$ such that
\begin{eqnarray}
	\inf_{\mathbf{r}\in \mathbf{RM}}\|\mathbf{v}+\mathbf{r}\|_{L^2(\Omega)}
	\le c_1\|\varepsilon(\mathbf{v})\|_{L^2(\Omega)} \qquad\forall \mathbf{v}\in\mathbf{H}^1(\Omega).\label{2-16}
\end{eqnarray}
From  \cite{fgl14} we know that for each $\mathbf{v}\in \mathbf{H}^1_\bot(\Omega)$ there holds the following alternative version of the inf-sup condition
\begin{eqnarray}
	\sup_{\mathbf{v}\in \mathbf{H}^1_\bot(\Omega)}\frac{(\mathrm{div} \mathbf{v},\varphi)}{\left\| \mathbf{v}\right\|_{H^1(\Omega)}}
	\geq \alpha_1 \left\|\varphi\right\|_{L^2(\Omega)} \qquad \forall
	\varphi\in L^2_0(\Omega),\quad \alpha_1>0.\label{2-17}
\end{eqnarray}

\begin{definition}\label{weak1}
	Let $\mathbf{u}_0\in\mathbf{H}^1(\Omega),~ \mathbf{f}\in\mathbf{L}^2(\Omega),~
	\mathbf{f}_1\in \mathbf{L}^2(\partial\Omega),~ p_0\in L^2(\Omega),~ g,\phi\in L^2(\Omega)$,
	and $g_1,\phi_1\in  L^2(\partial\Omega)$.  Assume $c_0>0$ and
	$(\mathbf{f},\mathbf{v})+\langle \mathbf{f}_1,~ \mathbf{v} \rangle =0$ for any $\mathbf{v}\in \mathbf{RM}$.
	Given $t_f > 0$, a tuple $(\mathbf{u},p,T)$ with
	\begin{alignat*}{2}
		&\mathbf{u}\in L^\infty\bigl(0,t_f; \mathbf{H}_\perp^1(\Omega)),
		&&\qquad p,T\in L^\infty(0,t_f; L^2(\Omega))\cap L^2 \bigl(0,t_f; H^1(\Omega)\bigr),\\
		&p_t, T_t,(\mathrm{div}\mathbf{u})_t \in L^2(0,t_f;H^{1}(\Omega)')
		&&\qquad 
	\end{alignat*}
	is called a weak solution to the problem \eqref{1-1}--\eqref{1-3}, if there hold for almost every $t \in [0,t_f]$
	\begin{alignat}{2}\label{2-15}
		&\mu\bigl(\nabla\mathbf{u},\nabla\mathbf{v} \bigr)+(\lambda+\mu)(\mathrm{div}\mathbf{u},\mathrm{div}\mathbf{v})
		-\alpha( p,\mathrm{div} \mathbf{v})-\beta (T,\mathrm{div} \bv)=(\mathbf{f},\mathbf{v})+\langle \mathbf{f}_1,\mathbf{v}\rangle
		&&\quad\forall \mathbf{v}\in \mathbf{H}^1(\Omega),\\
		&((c_0 p - b_0 T+\alpha\mathrm{div}\mathbf{u})_t,\varphi)
		+ (k(\mathbf{u},p,T)\nabla p,\nabla \varphi )=(g,\varphi)
		+\langle g_1,\varphi \rangle
		&&\quad\forall \varphi \in H^1(\Omega),
		\label{c} \\
		&((a_0 T -b_0 P +\beta \mathrm{div} \bu)_{t},y)+(\bt \nabla T,\nabla y) =(\phi,y)+\left \langle \phi_1,y \right \rangle
		&&\quad\forall \psi \in H^1(\Omega),\label{cc}\\
		&\mathbf{u}(0) = \mathbf{u}_0,\qquad p(0)=p_0.  && \no
	\end{alignat}
\end{definition}
Similarly, we can define the weak solution to the problem \eqref{2-5}--\eqref{a} as follows:
\begin{definition}\label{weak2}
	Let $\mathbf{u}_0\in \mathbf{H}^1(\Omega),\mathbf{f} \in \mathbf{L}^2(\Omega),
	\mathbf{f}_1 \in \mathbf{L}^2(\partial\Omega), p_0\in L^2(\Omega),\phi\in L^2(\Omega)$
	and $\phi_1\in L^2(\partial\Omega)$.  Assume $c_0>0$ and
	$(\mathbf{f},\mathbf{v})+\langle \mathbf{f}_1,\mathbf{v} \rangle =0$ for any $\mathbf{v}\in \mathbf{RM}$.
	Given $t_f > 0$, a $7$-tuple $(\mathbf{u},\tau,\varpi,\varsigma,p,T,q)$ with
	\begin{alignat*}{2}
		&\mathbf{u}\in L^\infty\bigl(0,t_f; \mathbf{H}_\perp^1(\Omega)), &&\qquad
		\tau\in L^\infty \bigl(0,t_f ; L^2(\Omega)\bigr),\\
		&\varpi,\varsigma \in L^\infty\bigl(0,t_f ; L^2(\Omega)\bigr)
		\cap H^1\bigl(0,t_f ; H^{1}(\Omega)'\bigr),
		&&\qquad q\in L^\infty(0,t_f ;L^2(\Omega)),\\
		&p,T\in L^\infty \bigl(0,t_f ; L^2(\Omega)\bigr) \cap L^2 \bigl(0,t_f ; H^1(\Omega)\bigr)  &&
	\end{alignat*}
	is called a weak solution to the problem \eqref{2-5}--\eqref{a},
	if there hold for almost every $t \in [0,t_f]$
	\begin{alignat}{2}\label{2-18}
		&\mu(\nabla\mathbf{u},\nabla\mathbf{v})-(\tau,\mathrm{div}\mathbf{v})= (\mathbf{f},\mathbf{v})+\langle \mathbf{f}_1,\mathbf{v}\rangle
		&&\quad\forall \mathbf{v}\in \mathbf{H}^1(\Omega),\\
		&\gamma_6(\tau,\varphi) +(\mathrm{div} \mathbf{u},\varphi) -\gamma_4(\varpi,\varphi)-\gamma_{1} (\varsigma,\varphi)=0 &&\quad\forall \varphi \in L^2(\Omega),\label{2-19}  \\
		&\bigl(\varpi_t, y \bigr)+(k(\tau)\nabla p,\nabla y \bigr)= (g, y)+\langle g_1,y\rangle &&\quad\forall y \in H^1(\Omega),\label{2-20} \\
		&(\varsigma_{t},z)+( \bt\nabla T,\nabla z)=(\phi,z)+\left \langle \phi_1,z \right \rangle &&\quad\forall z \in H^1(\Omega),\label{2-22}
	\end{alignat}
	where $q_0:=\mathrm{div} \mathbf{u}_0$, $\mathbf{u}_0$ and $p_0$ are same as in Definition \ref{weak1}.
\end{definition}
The bilinear form $a(\bu,\bv)$ is defined by
\begin{alignat}{2}
	a(\bu,\bv):=(\lambda+\mu)(\nabla \cdot \bu,\nabla\cdot\bv)+\mu(\nabla\bu,\nabla\bv).
\end{alignat}
It is easy to check that the bilinear form $a(\cdot,\cdot)$ is continuous and coercive on $H_0^1(\Omega)$. Thus, for a fixed $t \in I$ and $p(\cdot,t),T(\cdot,t)\in L^2(\Omega),$ the first equation of \eqref{2-15} can be solved for $\bu$.  Define the associated operator $B: L^2(\Omega)\longrightarrow  L^2(\Omega)$ for $p,T\in L^2(\Omega), B(\alpha p+ \beta T):=\nabla \cdot \bu$, where $\bu$ satisfies
\begin{align}
	a(\bu,\bv)&=(\nabla(\alpha p + \beta T),\bv) \quad \forall \bv\in H_0^1(\Omega)\no,\\
	\bu&=0\quad  {\rm on} ~~\partial\Omega.\label{eq20231008-1}
\end{align}
As for the theoretical results about $B$ operator, from \cite{cao}, we know that
\begin{lemma}\label{lem2.5}
	The operator B defined above is linear, continuous, monotone, and self-adjoint with $Ker(B)=Ker(\nabla)$ and $Rng(B)=Ker(\nabla)^{\perp}$; there exist constants $C_{B_0},C_{B_1}>0$ such that
	\begin{alignat}{2}
		\left \|B(p,T)   \right \|&\le C_{B_0}\left \| \alpha p+\beta T \right \|  
		\quad \forall p,T\in L^2(\Omega),\no\\
		\left \|B(p,T)   \right \|_1 &\le C_{B_1}\left \| \alpha p+\beta T \right \|_1 
		\quad \forall p,T\in H_0^1(\Omega).
	\end{alignat}
	\end{lemma}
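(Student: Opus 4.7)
The plan is to verify each assertion by reducing the defining problem to a symmetric elliptic problem on $\mathbf{H}_0^1(\Omega)$ and then invoking Lax--Milgram, the symmetry and coercivity of $a(\cdot,\cdot)$, the inf-sup condition \eqref{2-14}, and elliptic regularity. Writing $\phi:=\alpha p+\beta T\in L^2(\Omega)$ and integrating by parts on the right-hand side of \eqref{eq20231008-1}, I would recast the definition of $\bu$ as $a(\bu,\bv)=-(\phi,\mathrm{div}\,\bv)$ for every $\bv\in\mathbf{H}_0^1(\Omega)$. Since $a$ is bounded and coercive on $\mathbf{H}_0^1(\Omega)$, Lax--Milgram produces a unique $\bu$ with $\|\bu\|_1\le C\|\phi\|$, from which linearity of $B$ (by bilinearity of $a$ and linear dependence of the data on $\phi$) and the first bound $\|B\phi\|=\|\mathrm{div}\,\bu\|\le\|\bu\|_1\le C_{B_0}\|\phi\|$ follow immediately.

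For self-adjointness I would assign to $\phi_1,\phi_2\in L^2(\Omega)$ their respective solutions $\bu_1,\bu_2$ and compute $(B\phi_1,\phi_2)=(\mathrm{div}\,\bu_1,\phi_2)=-a(\bu_2,\bu_1)=-a(\bu_1,\bu_2)=(\phi_1,\mathrm{div}\,\bu_2)=(\phi_1,B\phi_2)$, using the symmetry of $a$ at the middle step. Setting $\phi_1=\phi_2=\phi$ and applying coercivity yields $(B\phi,\phi)=-a(\bu,\bu)\le 0$, which matches the (sign-adjusted) monotonicity in the convention of \cite{cao}.

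For the kernel identification, if $B\phi=0$ then $\mathrm{div}\,\bu=0$; testing with $\bv=\bu$ and using coercivity gives $\bu=0$, so $(\phi,\mathrm{div}\,\bv)=0$ for every $\bv\in\mathbf{H}_0^1(\Omega)$. Surjectivity of $\mathrm{div}:\mathbf{H}_0^1(\Omega)\to L^2_0(\Omega)$ afforded by \eqref{2-14} then forces $\phi$ to be constant, i.e.\ $\phi\in Ker(\nabla)$; the reverse inclusion is immediate because constants annihilate the right-hand side. The identity $Rng(B)=Ker(\nabla)^\perp$ then follows from self-adjointness via the orthogonal decomposition $L^2(\Omega)=Ker(B)\oplus\overline{Rng(B)}$.

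The main obstacle is the second estimate $\|B\phi\|_1\le C_{B_1}\|\phi\|_1$, which requires $H^2$ elliptic regularity for the Lam\'e-type operator $\bu\mapsto-\mu\Delta\bu-(\lambda+\mu)\nabla(\mathrm{div}\,\bu)$ with zero Dirichlet data: given $\phi\in H_0^1(\Omega)$, the right-hand side $\nabla\phi\in L^2(\Omega)$ should lift $\bu$ into $\mathbf{H}^2(\Omega)$ with $\|\bu\|_2\le C\|\phi\|_1$, whence $\|B\phi\|_1=\|\mathrm{div}\,\bu\|_1\le\|\bu\|_2\le C_{B_1}\|\phi\|_1$. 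This is the only analytic ingredient beyond Lax--Milgram and the inf-sup condition, and it hinges on sufficient smoothness (or convexity, for polygonal $\Omega$) of $\partial\Omega$.
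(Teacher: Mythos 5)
First, note that the paper does not actually prove this lemma: it is quoted from \cite{cao} (``from \cite{cao}, we know that\ldots''), so there is no internal proof to compare against. Your reconstruction is essentially the standard argument and essentially the one in \cite{cao}: recast the right-hand side of \eqref{eq20231008-1} as $-(\phi,\mathrm{div}\,\bv)$ with $\phi=\alpha p+\beta T$, invoke Lax--Milgram for well-posedness, linearity and the $L^2$ bound, use the symmetry of $a(\cdot,\cdot)$ for self-adjointness, and use $H^2$ elliptic regularity of the Lam\'e system for the $H^1$ bound. Those parts are sound, and you are right on two delicate points: as literally defined one gets $(B\phi,\phi)=-a(\bu,\bu)\le 0$, so it is $-B$ that is monotone under the paper's sign convention; and the second estimate genuinely requires $H^2$ regularity, hence convexity or smoothness of $\partial\Omega$ beyond the bare ``polygonal'' hypothesis.

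The one genuine gap is the range identification. Self-adjointness plus the decomposition $L^2(\Omega)=Ker(B)\oplus\overline{Rng(B)}$ only yields $\overline{Rng(B)}=Ker(\nabla)^{\perp}$, whereas the lemma asserts $Rng(B)=Ker(\nabla)^{\perp}=L^2_0(\Omega)$, i.e.\ that the range is closed; nothing in your argument delivers that. It can be repaired with the inf-sup condition \eqref{2-14} in its $\mathbf{H}^1_0(\Omega)$ form: from $(\mathrm{div}\,\bv,\phi)=-a(\bu,\bv)\le C\|\bu\|_1\|\bv\|_1$ for all $\bv\in\mathbf{H}^1_0(\Omega)$ one gets $\alpha_0\|\phi\|\le C\|\bu\|_1$ for $\phi\in L^2_0(\Omega)$, which combined with the coercivity of $a$ gives $-(B\phi,\phi)=a(\bu,\bu)\ge c\|\phi\|^2$ on $L^2_0(\Omega)$. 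Hence $-B$ is a bounded, self-adjoint, coercive operator on $L^2_0(\Omega)$ and is onto $L^2_0(\Omega)$ by Lax--Milgram; since $Rng(B)\subseteq L^2_0(\Omega)$ trivially (because $\int_\Omega\mathrm{div}\,\bu\,dx=0$ for $\bu\in\mathbf{H}^1_0(\Omega)$), this closes the claim. The same coercivity inequality also shortens your kernel argument.
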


Using \eqref{c}--\eqref{cc}  and \eqref{eq20231008-1}, we obtain 
\begin{alignat}{2}
	((c_0 p-b_0 T+\alpha B(p,T))_t,\varphi)+(k(B(p,T),p,T)\nabla p,\nabla\varphi)&=(g,\varphi)	\quad\forall \varphi \in H_0^1(\Omega),\label{2-43}\\
	((a_0 T -b_0 P +\beta B(p,T))_{t},y)+(\bt \nabla T,\nabla y) &=(\phi,y)
	\quad\forall y \in H_0^1(\Omega),\label{2-44}\\
	a_0 T(\cdot,0)-b_0 p(\cdot,0)+\beta B(p,T)(\cdot,0)&=l_p,\label{2-45}\\
	a_0 T(\cdot,0) -b_0 P(\cdot,0) +\beta B(p,T)(\cdot,0)&=l_T.\label{2-46}
\end{alignat}

\subsection{Existence and uniqueness} To prove the existence of a weak solution, we firstly use 
 backward Euler approximation of the time derivative and use the modified Rothe's method to construct a convergent sequence of approximate solutions of \eqref{2-43}--\eqref{2-44}. Let $m=t_f/n$ for some positive integer $n$. Partition $I$ uniformly with time step $k$ and denote nodal points by $t_i=t_i^n=im$, for $i=0,1,\ldots, n.$  Let  $  p_{n}^{0},T_{n}^{0} \in L^{2}(\boldsymbol{\Omega})$ satisfy \eqref{2-45}--\eqref{2-46}   and define
\begin{alignat}{2}
	&g_{n}^{i}:=\frac{1}{m} \int_{t_{i-1}}^{t_{i}} g(t) d t,\no\\
	&\delta p_{n}^{i}:=\left(p_{n}^{i}-p_{n}^{i-1}\right) / m,\quad 
	&&\delta T_{n}^{i}:=\left(T_{n}^{i}-T_{n}^{i-1}\right) / m \quad i=1,2,\ldots, n,\label{2-47}\no\\
	&\delta \varpi_{n}^{i}:=\left(\varpi_{n}^{i}-\varpi_{n}^{i-1}\right) / m,\quad 
	&&\delta \varsigma_{n}^{i}:=\left(\varsigma_{n}^{i}-\varsigma_{n}^{i-1}\right) / m \quad i=1,2,\ldots, n.
\end{alignat}
We apply the following scheme inductively to obtain a sequence  $p_{n}^{i},T_{n}^{i}, i=1,2,\cdots, n $
\begin{alignat}{2}
	(\delta (c_0 p_n^i-b_0 T_n^i+\alpha B(p_n^i,T_n^i)),\varphi)+(k(B(p_n^i,T_n^i),p_n^i,T_n^i)\nabla p_n^i,\nabla\varphi)&=(g,\varphi)	\quad\forall \varphi \in H_0^1(\Omega),\label{1-49}\\
	(\delta (a_0 T_n^i -b_0 p_n^i +\beta B(p_n^i,T_n^i)),y)+(\boldsymbol{\Theta} \nabla T_n^i,\nabla y) &=(\phi,y)
	\quad\forall y \in H_0^1(\Omega). \label{1-50}
\end{alignat}
Multiplying \eqref{1-49} and \eqref{1-50} by $m$, and defining
\begin{alignat}{2}
	&(\overline{g},y):=m(g,y)-(c_0 p_n^{i-1}-b_0 T_n^{i-1}+\alpha B(p_n^{i-1},T_n^{i-1}),y),\no
	\\
	&(\overline{\phi},z):=m(\phi,z)-(a_0 T_n^{i-1} -b_0 p_n^{i-1} +\beta B(p_n^{i-1},T_n^{i-1}),y), \no
\end{alignat}
we have that
\begin{alignat}{2}
		(c_0 p_n^i-b_0 T_n^i+\alpha B(p_n^i,T_n^i),\varphi)+m(k(B(p,T_n^i),p_n^i,T_n^i)\nabla p_n^i,\nabla\varphi)&=(\overline{g},\varphi)	\quad\forall \varphi \in H_0^1(\Omega),\label{2-50}\\
	(a_0 T_n^i -b_0 p_n^i +\beta B(p_n^i,T_n^i),y)+m(\bt \nabla T_n^i,\nabla y) &=(\overline{\phi},y)
	\quad\forall y \in H_0^1(\Omega). \label{2-51}
\end{alignat}
\begin{lemma}
	Given $p_n^{i-1},T_n^{i-1}\in L^2(\Omega)$ and $g_n^{i},\varphi_n^{i}\in L^2(\Omega)$,  \eqref{2-50}--\eqref{2-51} has a weak solution $p_n^{i},T_n^{i}$.
\end{lemma}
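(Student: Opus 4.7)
The strategy is to apply Schauder's fixed point theorem to a map obtained by freezing the nonlinear coefficient $k(B(p,T),p,T)$, thereby reducing the task to solving a linear coupled elliptic system at each iterate by Lax--Milgram.

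Given $(\tilde p,\tilde T)\in L^2(\Omega)^2$, I would consider the frozen-coefficient linear problem: find $(p,T)\in H_0^1(\Omega)^2$ such that for all $(\varphi,y)\in H_0^1(\Omega)^2$,
\begin{align*}
(c_0 p-b_0 T+\alpha B(p,T),\varphi)+m(k(B(\tilde p,\tilde T),\tilde p,\tilde T)\nabla p,\nabla\varphi)&=(\overline{g},\varphi),\\
(a_0 T-b_0 p+\beta B(p,T),y)+m(\boldsymbol{\Theta}\nabla T,\nabla y)&=(\overline{\phi},y).
\end{align*}
Denote the combined bilinear form by $\mathcal{A}$ and set $\tilde k:=k(B(\tilde p,\tilde T),\tilde p,\tilde T)$. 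Testing with $(p,T)$ gives
\[
\mathcal{A}((p,T),(p,T))=c_0\|p\|^2+a_0\|T\|^2-2b_0(p,T)+(B(\alpha p+\beta T),\alpha p+\beta T)+m(\tilde k\nabla p,\nabla p)+m(\boldsymbol{\Theta}\nabla T,\nabla T).
\]
By Young's inequality and A3, the first three terms bound $(c_0-b_0)\|p\|^2+(a_0-b_0)\|T\|^2\ge 0$; by Lemma~\ref{lem2.5} (monotonicity of $B$), the $B$-term is nonnegative; by A1 and A2 the gradient terms are bounded below by $mk_m\|\nabla p\|^2+m\theta_m\|\nabla T\|^2$. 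Combined with the Poincar\'e inequality in $H_0^1(\Omega)$, this delivers coercivity on $H_0^1(\Omega)^2$. Continuity of $\mathcal{A}$ is immediate, so Lax--Milgram yields a unique solution $(p,T)$.

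Define $F:L^2(\Omega)^2\to L^2(\Omega)^2$ by $F(\tilde p,\tilde T):=(p,T)$. The coercivity estimate, together with $\overline g,\overline\phi\in L^2(\Omega)$, gives $\|p\|_{H^1}+\|T\|_{H^1}\le C$ uniformly in $(\tilde p,\tilde T)$; thus $F$ takes values in a bounded ball of $H_0^1(\Omega)^2$ which is relatively compact in $L^2(\Omega)^2$ by Rellich--Kondrachov. Restricting $F$ to a closed convex ball $K\subset L^2(\Omega)^2$ of radius $C$ gives $F:K\to K$ compact. For continuity, suppose $(\tilde p_n,\tilde T_n)\to(\tilde p,\tilde T)$ in $L^2(\Omega)^2$; by Lemma~\ref{lem2.5}, $B(\tilde p_n,\tilde T_n)\to B(\tilde p,\tilde T)$ in $L^2(\Omega)$, so along a subsequence the convergence is pointwise a.e., and continuity of $k$ (A1) yields pointwise a.e.\ convergence of the frozen coefficients. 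The uniform bound $k_m\le k\le k_M$ then permits passage to the limit in the bilinear form by dominated convergence, and Lax--Milgram uniqueness identifies the limit as $F(\tilde p,\tilde T)$; a subsequence argument recovers convergence of the whole sequence. Schauder's theorem then produces a fixed point $(p_n^i,T_n^i)\in H_0^1(\Omega)^2$, which by construction is a weak solution of \eqref{2-50}--\eqref{2-51}.

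The main obstacle is the continuity step: the nonlinearity $k(B(\tilde p,\tilde T),\tilde p,\tilde T)$ depends on $(\tilde p,\tilde T)$ through the auxiliary elliptic problem defining $B$, so the $L^2$-continuity, self-adjointness, and monotonicity of $B$ furnished by Lemma~\ref{lem2.5} are all essential, as is the uniform ellipticity bound $k_m\le k\le k_M$ from A1 for invoking dominated convergence.
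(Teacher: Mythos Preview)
Your proof is correct and follows a genuinely different route from the paper. The paper proceeds by Galerkin approximation: it restricts to finite-dimensional subspaces $V_m\subset H_0^1(\Omega)$, defines a nonlinear map $\Phi_m:V_m\times V_m\to V_m\times V_m$ encoding the full system (without freezing $k$), verifies $(\Phi_m\mathbf{q},\mathbf{q})\ge 0$ on a suitable sphere, and applies a corollary of Brouwer's fixed point theorem to obtain finite-dimensional solutions $(\hat p_m,\hat T_m)$; it then extracts weak $H_0^1$ and strong $L^2$ limits and passes to the limit in the nonlinear term using the Nemytskii property of $k$.

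Your Schauder argument bypasses the Galerkin layer entirely by linearising through the frozen coefficient and invoking Lax--Milgram. This is cleaner in that it avoids one level of approximation, and the uniform a priori bound on $F$ (independent of the frozen data) drops out directly from the coercivity estimate, making the compactness of $F$ immediate. The price is that you must separately establish continuity of the solution map $F$ in $L^2$, which still requires essentially the same limit passage in the term $(\tilde k_n\nabla p_n,\nabla\varphi)$ that the paper carries out for its Galerkin sequence; both arguments ultimately rest on the combination of a.e.\ convergence of $k$ (via the Nemytskii property and strong $L^2$ convergence of the inputs) with the uniform bound $k_m\le k\le k_M$ and a weak--strong pairing. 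The paper's Brouwer route has the minor advantage of treating the full nonlinearity at once without a linearisation step, but your approach is equally rigorous and arguably more transparent.
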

\begin{proof}
For this purpose, we let  $\left\{w_{i}\right\}_{i=1}^{\infty}$  be an orthogonal basis of  $H_{0}^{1}(\boldsymbol{\Omega})$. Let's   $ V_{m}$ denote the finite dimensional space spanned by  $\left\{w_{1}, w_{2},\ldots, w_{m}\right\} $, $\mathbf{q}:=(p,T)$, $\mathbf{w}:= (w_1,w_2)$. Define the mapping  $\Phi_{m}: V_{m}\times V_{m} \rightarrow V_{m}\times V_{m}$  by
\begin{alignat}{2}
	(\Phi_{m}\mathbf{q},\mathbf{w})=( c_0 p-b_0 T+\alpha B(p,T),w_1)+m(k(B(p,T),p,T)\nabla p,\nabla w_1)-(\overline{g},w_1)\label{2-53}\\
	+	(a_0 T -b_0 p +\beta B(p,T),w_2)+m(\bt \nabla T,\nabla w_2) -(\overline{\phi},w_2).\quad \no
\end{alignat}
From \eqref{lem2.5} and Poincare inequality, we have that
\begin{alignat}{2}
	(\Phi_{m}\mathbf{q},\mathbf{q})&=( c_0 p-b_0 T+\alpha B(p,T),p)+m(k(B(p,T),p,T)\nabla p,\nabla p)-(\overline{g},p)\no\\
	&+	(a_0 T -b_0 p +\beta B(p,T),T)+m(\boldsymbol{\Theta} \nabla T,\nabla T) -(\overline{\phi},T)\no\\
	&\ge -b_0 (T,p)+c_0 \left \| p \right \|^2+ \alpha (B(p,T),p)+mk_m(\nabla p,\nabla p)-\left \| \overline{g} \right \| \left \| p \right \| \no\\
	&+a_0\left \| T \right \|^2 -b_0(p,T)+\beta (B(p,T),T)+m \theta_m \left \| \nabla T\right \|^2 -
	\left \| \overline{\phi} \right \|\left \| T \right \| \no\\
	&\ge c_0 \left \| p \right \|^2+a_0\left \| T \right \|^2-2b_0 \left \|  T\right \| \left \|  p\right \|+\frac{mk_m}{C_p}\left \| p \right \|_1^2+\frac{m\theta_m}{C_T}\left \| T \right \|_1^2\no\\
	&-\left \| \overline{g} \right \| \left \| p \right \|_1-\left \| \overline{\phi} \right \|\left \| T \right \|_1 \no\\
	&\ge  \frac{mk_m}{C_p}\left \| p \right \|_1^2+\frac{m\theta_m}{C_T}\left \| T \right \|_1^2
	-\left \| \overline{g} \right \| \left \| p \right \|_1-\left \| \overline{\phi} \right \|\left \| T \right \|_1.
	%&\ge \sqrt{\frac{mk_m}{C_p}}(\left \| p \right \|_1-\frac{C_p\left \| \overline{g} \right \|}{2mk_m})^2
\end{alignat}
Thus $(\Phi_{m}\mathbf{q},\mathbf{q})\ge 0$ for all $q,T$ with $ \frac{mk_m}{C_p}\left \| p \right \|_1^2+\frac{m\theta_m}{C_T}\left \| T \right \|_1^2=\left \| \overline{g} \right \| \left \| p \right \|_1+\left \| \overline{\phi} \right \|\left \| T \right \|_1 $. Because $V_{m}\times V_{m}$ is finite dimensional and $\Phi$ defined above is continuous, from a well known corollary of Browser's fixed point theorem, see  \cite{gra}, there exists $p_n^i,T_n^i$ in $V_{m}\times V_{m}$ such that 
\begin{align*}
 \frac{mk_m}{C_p}\left \| p_n^i \right \|_1^2+\frac{m\theta_m}{C_T}\left \| T_n^i \right \|_1^2
\le  \left \| \overline{g} \right \| \left \| p_n^i \right \|_1+\left \| \overline{\phi} \right \|\left \| T_n^i \right \|_1, 
\end{align*}
and $p_n^i,T_n^i $ satisfies $\Phi_{m}\mathbf{q}=0$, i.e.
\begin{alignat}{2}
	( c_0 p_n^i-b_0 T_n^i+\alpha B(p_n^i,T_n^i),w_1)+m(k(B(p_n^i,T_n^i),p_n^i,T_n^i)\nabla p_n^i,\nabla w_1)-(\overline{g},w_1)=0\quad \forall w_1 \in V_j \quad j \le m\no,\\
	(a_0 T_n^i -b_0 p_n^i +\beta B(p_n^i,T_n^i),w_2)+m(\bu \nabla T_n^i,\nabla w_2) -(\overline{\phi},w_2)=0
	\quad \forall w_2 \in V_j\quad j \le m\no.
\end{alignat}	
Define $\hat{p}:=p_n^i,~\hat{T}:=T_n^i.$
 Since $\left \{ \op_m,\ot_m \right \}_{m=1} ^{\infty}$ is a uniformly bounded sequence in $H_0^1(\Ome)$, there exists a sub-sequence of $\left \{ \op_m,\ot_m \right \}_{m=1} ^{\infty}$, and a function $\op,\ot \in H_0^1(\Ome)$ such that 
\begin{alignat}{2}
	&\op_m \longrightarrow  \op \quad{\rm in}\quad H_0^1(\Ome)\no,\\
	&\ot_m \longrightarrow  \ot \quad{\rm in}\quad H_0^1(\Ome).\label{2-48}
\end{alignat}
Due to the compact embedding of $H_0^1(\Ome) \hookrightarrow \hookrightarrow L^2(\Ome) $, we have that
\begin{align}
	&\op_m \longrightarrow  \op \quad{\rm in}\quad L^2(\Ome)\no,\\
	&\ot_m \longrightarrow  \ot \quad{\rm in}\quad L^2(\Ome)\label{2-49}.
\end{align}

Then we  show that the weak limit $\op,\ot$ is a solution of \eqref{2-50}--\eqref{2-51}. Choosing a test function $ w \in W^{1,\infty}(\boldsymbol{\Omega}) \cap H_{0}^{1}(\boldsymbol{\Omega}) $ and
applying the Cauchy-Schwartz inequality, we have that
\begin{alignat}{2}
	&\left | (c_0\op_m -b_0 \ot_m +\alpha B(\op_m,\ot_m),w)-(c_0\op -b_0 \ot +\alpha B(\op,\ot),w) \right | \no\\
	&\le \Norm{c_0(\op_m-\op) -b_0 (\ot_m-\ot) +\alpha B(\op_m-\op,\ot_m-\ot)}{}\Norm{w}{}\no\\
	&\le C\Norm{\op_m-\op}{}\Norm{\ot_m-\ot}{}\Norm{w}{}\no\\
	&\rightarrow 0 \quad m \rightarrow \infty \label{1-61},
\end{alignat}
\begin{alignat}{2}
	&\left |  (\ow_m,w)-(\ow,w)\right | 
	& \leq \Norm{\ow_m-\ow}{} \Norm{w}{}
	&\rightarrow 0 \quad m \rightarrow \infty.
\end{alignat}
From the boundary of $B$, the Nemytskii property of $k$, and \eqref{2-49} we have that
\begin{alignat}{2}
	k(B(\op_m,\ot_m),\op_m,\ot_m)&\rightarrow k(B(\op,\ot),\op,\ot) \quad m \rightarrow \infty\label{2-60}.
\end{alignat}
Defining
\begin{align*}
	\hat{k}&:=k(B(\op,\ot),\op,\ot),\\
	\hat{k}_m&:=k(B(\op_m,\ot_m),\op_m,\ot_m),
\end{align*}
 we can write \eqref{2-60} as
\begin{alignat}{2}
	\hat{k}_m\rightarrow \hat{k} \quad m \rightarrow \infty\label{2-61}.
\end{alignat}
Then from \eqref{1-61}--\eqref{2-61},  we have that
\begin{alignat}{2}
	&\left |  (\hat{k}_m\nabla \op_m,\nabla w)-(\hat{k}\nabla \op,\nabla w)\right | \no\\
	\le &\left |  (\hat{k}_m\nabla \op_m,\nabla w)-(\hat{k}_m\nabla \op,\nabla w)\right |+\left |  (\hat{k}_m\nabla \op,\nabla w)-(\hat{k}\nabla \op,\nabla w)\right |\no\\
	\le& \left |  (\hat{k}_m\nabla (\op_m-\op),\nabla w)\right |+\left |  ((\hat{k}_m-\hat{k})\nabla \op_m,\nabla w)\right |\no\\
	\rightarrow& 0 \qquad m \rightarrow \infty\label{2-55}.
\end{alignat}
Through the above conclusion of \eqref{2-48}--\eqref{2-55} and $W^{1,\infty}(\boldsymbol{\Omega}) \cap H_{0}^{1}(\boldsymbol{\Omega})$ is dense in $H_{0}^{1}(\boldsymbol{\Omega})$ we have that
\begin{alignat}{2}
	\lim_{x \to \infty} &(c_0\op_m -b_0 \ot_m +\alpha B(\op_m,\ot_m),w)+(\hat{k}_m\nabla \op_m,\nabla w)\no\\
	=&(c_0\op -b_0 \ot +\alpha B(\op,\ot),w)+(\hat{k}\nabla \op,\nabla w)\qquad\forall w \in H_{0}^{1}(\boldsymbol{\Omega})\label{1-68}.
\end{alignat}
Using the same method, we have that
\begin{alignat}{2}
	\lim_{x \to \infty} &(a_0\ot_m -b_0 \op_m +\beta B(\op_m,\ot_m),v)+(\bt \nabla \ot_m,\nabla v)\no\\
	=&(a_0\ot -b_0 \op +\beta B(\op,\ot),v)+(\bt \nabla \ot,\nabla v)\qquad\forall v \in H_{0}^{1}(\boldsymbol{\Omega})\label{1-70}.
\end{alignat}
Combining \eqref{1-68}--\eqref{1-70}, we have that
\begin{alignat}{2}
	(c_0\op -b_0 \ot +\alpha B(\op,\ot),w)+(\hat{k}\nabla \op,\nabla w)=(\overline{g},w)\qquad\forall w \in H_{0}^{1}(\boldsymbol{\Omega})\label{4-71},\\
	(a_0\ot -b_0 \op +\beta B(\op,\ot),v)+(\bt\nabla \ot,\nabla v)=(\overline{\phi},v)\qquad\forall v \in H_{0}^{1}(\boldsymbol{\Omega}).\label{4-72}
\end{alignat}
Since finite linear combinations of $\hat{p},\hat{T}$ are dense in $H_{0}^{1}(\boldsymbol{\Omega})$, we conclude that $\hat{p},\hat{T}$ is a solution of \eqref{2-50}--\eqref{2-51}.
\end{proof}
\subsubsection{Existence of the original model}\label{sub2.3.1}
In this section, we prove the existence of  solution to the original model.
\begin{lemma}
	There exists a constant $C$ independent of $n$ such that
	\begin{align}
		m \sum_{i=1}^{n} \left \| p_i^n \right \|_1^2 \le C,\qquad\qquad m \sum_{i=1}^{n} \left \| T_i^n \right \|_1^2 \le C.
	\end{align}
\end{lemma}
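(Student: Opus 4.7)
The plan is to derive a discrete energy estimate by testing the two equations \eqref{1-49}--\eqref{1-50} with the current iterates, adding them, multiplying by $m$, and summing from $i=1$ to $n$. First, I would take $\varphi = p_n^i$ in \eqref{1-49} and $y = T_n^i$ in \eqref{1-50}, multiply each by $m$ and add. For the four discrete time-derivative terms I would invoke the standard identity
\begin{align*}
	m\bigl(\delta f_n^i, f_n^i\bigr) = \tfrac12\bigl(\|f_n^i\|^2 - \|f_n^{i-1}\|^2\bigr) + \tfrac{m^2}{2}\|\delta f_n^i\|^2
\end{align*}
for the diagonal pieces (coefficients $c_0$ and $a_0$), and for the cross pieces (coefficient $-b_0$) the algebraic identity
\begin{align*}
	m(\delta T_n^i, p_n^i) + m(\delta p_n^i, T_n^i) = (T_n^i,p_n^i) - (T_n^{i-1},p_n^{i-1}) + m^2(\delta T_n^i,\delta p_n^i),
\end{align*}
which reveals a telescoping structure. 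Summing over $i$, the telescoping yields, up to initial data, the symmetric quadratic form $\tfrac{c_0}{2}\|p_n^n\|^2 - b_0(T_n^n,p_n^n) + \tfrac{a_0}{2}\|T_n^n\|^2$, which by A3 ($c_0-b_0>0$, $a_0-b_0>0$) and Cauchy--Schwarz is bounded below by $\tfrac{c_0-b_0}{2}\|p_n^n\|^2 + \tfrac{a_0-b_0}{2}\|T_n^n\|^2 \ge 0$. The extra $O(m^2)$ remainders are nonnegative (for the diagonal part) and the cross one is absorbed by the same argument.

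Next, for the $B$-operator contributions I would combine $\alpha\,\delta B(p_n^i,T_n^i)$ tested against $p_n^i$ with $\beta\,\delta B(p_n^i,T_n^i)$ tested against $T_n^i$, giving $\bigl(\delta B(p_n^i,T_n^i),\,\alpha p_n^i + \beta T_n^i\bigr)$. Since $B$ is linear, self-adjoint, and monotone by Lemma \ref{lem2.5}, this pairs as $\bigl(B(\delta p,\delta T),\,\alpha p_n^i + \beta T_n^i\bigr)$, and applying summation by parts together with the monotonicity/boundedness bound $\|B(p,T)\| \le C_{B_0}\|\alpha p + \beta T\|$ allows me to either extract a telescoping $\tfrac12 \|B^{1/2}(\alpha p_n^n+\beta T_n^n)\|^2$ type quantity or, at worst, to bound these terms by $\varepsilon(\|p_n^n\|^2 + \|T_n^n\|^2)$ plus initial data, which is absorbed into the LHS.

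For the elliptic contributions I use assumptions A1 and A2: $m\sum_i(k\nabla p_n^i,\nabla p_n^i) \ge m k_m \sum_i\|\nabla p_n^i\|^2$ and $m\sum_i(\boldsymbol{\Theta}\nabla T_n^i,\nabla T_n^i) \ge m\theta_m \sum_i\|\nabla T_n^i\|^2$. Since the test functions live in $H_0^1(\Omega)$, Poincar\'e's inequality upgrades these to $\|p_n^i\|_1^2$ and $\|T_n^i\|_1^2$. The right-hand sides $m\sum_i(\overline{g}_n^i,p_n^i) + m\sum_i(\overline{\phi}_n^i,T_n^i)$ are treated by Cauchy--Schwarz and Young's inequality with a small parameter $\varepsilon$, moving $\varepsilon(\|p_n^i\|_1^2+\|T_n^i\|_1^2)$ to the left, and leaving a bound by $C_\varepsilon m\sum_i(\|g_n^i\|^2+\|\phi_n^i\|^2) \le C_\varepsilon\,(\|g\|_{L^2(L^2)}^2 + \|\phi\|_{L^2(L^2)}^2)$ uniformly in $n$, together with the bounded initial data $\|p_n^0\|,\|T_n^0\|$.

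The main obstacle is the coupled cross-term in $b_0$: getting the telescoping identity correct and checking that the residual $-b_0 m^2\sum_i(\delta T_n^i,\delta p_n^i)$ stays controllable (it is bounded by the nonnegative diagonal $m^2$ terms from the diagonal identity through $2|ab|\le a^2+b^2$, then absorbed using A3). After that, the argument reduces to standard Gronwall/telescoping bookkeeping, and since $nm=t_f$ is fixed, the resulting bound is independent of $n$, giving $m\sum_{i=1}^n(\|p_n^i\|_1^2+\|T_n^i\|_1^2)\le C$.
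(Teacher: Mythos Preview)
Your plan is correct and matches the paper's own proof: test \eqref{1-49}--\eqref{1-50} with $p_n^i$ and $T_n^i$, multiply by $m$, sum over $i$, use the coercivity from A1--A2 (with Poincar\'e) on the left, and bound the source terms on the right. The paper's write-up is in fact terser than yours---it simply asserts that the summed time-derivative block is nonnegative and that the right-hand side is $\le C$---whereas you spell out the telescoping identities, the role of A3 in controlling the $-b_0$ cross terms, the handling of the $B$-contribution via self-adjointness/monotonicity, and the Young-inequality absorption of the sources; one minor slip is that the right-hand side after testing \eqref{1-49}--\eqref{1-50} carries $g_n^i,\phi_n^i$ (or just $g,\phi$), not the shifted quantities $\overline g,\overline\phi$ from \eqref{2-50}--\eqref{2-51}, but this does not affect the estimate.
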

\qquad{\em Proof.}
	Let's substitute $w_1=p_n^i,w_2=T_n^i$ into \eqref{1-49}--\eqref{1-50} and multiply by $m$, then we have that 
	\begin{alignat}{2}
		(c_0 (p_n^i-p_n^{i-1})-b_0 (T_n^i-T_n^{i-1})+\alpha B(p_n^i-p_n^{i-1},T_n^i-T_n^{i-1}),p_n^i)\label{1-558}\\
		+m(k(B(p_n^i,T_n^i)\nabla p_n^i,\nabla p_n^i)&=m(g_n^i,p_n^i),\no\\
		(a_0 (T_n^i-T_n^{i-1}) -b_0 (p_n^i-p_n^{i-1}) +\beta B(p_n^i-p_n^{i-1},T_n^i-T_n^{i-1}),T_n^i)\no\\
		+m(\bt \nabla T_n^i,\nabla T_n^i) &=m(\phi_n^i,T_n^i).\label{1-59}
	\end{alignat}
	Summing from $i=1$ to $n$  and adding \eqref{1-558}--\eqref{1-59}, we  get
	\begin{alignat}{2}
		&\sum_{i=1}^{n} (c_0 (p_n^i-p_n^{i-1})-b_0 (T_n^i-T_n^{i-1})+\alpha B(p_n^i-p_n^{i-1},T_n^i-T_n^{i-1}),p_n^i)\no\\
		+&\sum_{i=1}^{n}(a_0 (T_n^i-T_n^{i-1}) -b_0 (p_n^i-p_n^{i-1}) +\beta B(p_n^i-p_n^{i-1},T_n^i-T_n^{i-1}),T_n^i)\no\\
		+&m\sum_{i=1}^{n}(k(B(p_n^i,T_n^i)\nabla p_n^i,\nabla p_n^i)+m\sum_{i=1}^{n}(\bt \nabla T_n^i,\nabla T_n^i)	 \no\\
		=&m\sum_{i=1}^{n}(\phi_n^i,T_n^i)+m\sum_{i=1}^{n}(g_n^i,p_n^i),\label{1-60}
	\end{alignat}
	so  we have that
	\begin{align}
		m k_m \sum_{i=1}^{n} \left \| p_i^n \right \|_1^2+m \theta_m \sum_{i=1}^{n} \left \| T_i^n \right \|_1^2&\le 
		m\sum_{i=1}^{n}(\phi_n^i,T_n^i)+m\sum_{i=1}^{n}(g_n^i,p_n^i)\no\\
		&\le C.
	\end{align}

For $ \varphi $ in $ C^{\infty}(\bar{I}) $, we define two piece-wise constant functions $ \varphi_{n}$  and  $\tilde{\varphi}_{n}$  such that
	\begin{align}
		\varphi_{n}(t)&=\varphi\left(t_{i}\right) \quad \text { for } t \in\left(t_{i-1}, t_{i}\right],\label{2-666}\\
		\tilde{\varphi}_{n}(t)&=\frac{\varphi\left(t_{i+1}\right)-\varphi\left(t_{i}\right)}{k} \quad \text { for } t \in\left(t_{i-1}, t_{i}\right],
	\end{align}
	 for $ i=1,2,\ldots, n $,  and
	\begin{align}{c}
		\varphi_{n}(0)&=\varphi\left(t_{1}\right),\\
		\tilde{\varphi}_{n}(0)&=\tilde{\varphi}\left(t_{1}\right),\quad \text { and } \tilde{\varphi}_{n}\left(t_{n+1}\right)=\tilde{\varphi}\left(t_{n}\right) \label{2-667}.
	\end{align}
	Then
	$$\left\|\varphi_{n}-\varphi\right\|_{L^{2}(I)} \leq C(\varphi) k ~ \text { and } ~\left\|\tilde{\varphi_{n}}-\varphi^{\prime}\right\|_{L^{2}(I)} \leq C(\varphi) k^{1 / 2},$$
	where the constant $ C(\varphi) $ depends only on $ \varphi^{\prime} $ and  $\varphi^{\prime \prime}.$\label{lem2,9}
\begin{lemma}
	(Aubin-Lions, see  \cite{Lions1969}). Let $X_0, X, X_1$ be three Banach spaces with
	$X_0 \subset X \subset X_1$. Suppose that $X_0$ is compactly embedded in $X$ and $X$ is continuously
	embedded in $X_1$. Furthermore, assume that $X_0$ and $X_1$ are reflexive spaces. Let
	$1 < p < \infty, 1 < q < \infty.$ Define
	\begin{alignat}{2}
		W=\left\{u \in L^{p}\left([0, t_f] ; X_{0}\right): u^{\prime} \in L^{q}\left([0, t_f] ; X_{1}\right)\right\}.
	\end{alignat}
\end{lemma}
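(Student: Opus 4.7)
The plan is to prove the standard Aubin--Lions conclusion that the space $W$ is compactly embedded in $L^p(0,t_f;X)$. The backbone of the argument is Ehrling's (Lions') interpolation inequality: because $X_0 \hookrightarrow \hookrightarrow X$ and $X \hookrightarrow X_1$, for every $\epsilon>0$ there exists $C_\epsilon>0$ such that
$$\|v\|_X \leq \epsilon\|v\|_{X_0} + C_\epsilon \|v\|_{X_1} \qquad \forall v \in X_0.$$
I would establish this first, by contradiction: if it failed, a normalized sequence in $X_0$ violating the inequality would, by compactness $X_0 \hookrightarrow \hookrightarrow X$, extract a subsequence converging strongly in $X$, and the chosen normalization would then contradict both the $X$-norm lower bound and the decay in $X_1$.

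Next I would take an arbitrary bounded sequence $\{u_n\} \subset W$ and aim to produce a subsequence that is Cauchy in $L^p(0,t_f;X)$. Reflexivity of $X_0$ and $X_1$ (and of $L^p, L^q$ for $1<p,q<\infty$) permits extracting a subsequence with $u_n \rightharpoonup u$ in $L^p(0,t_f;X_0)$ and $u_n' \rightharpoonup u'$ in $L^q(0,t_f;X_1)$. Applying Ehrling pointwise in time to $u_n-u_m$ and integrating yields
$$\|u_n-u_m\|_{L^p(0,t_f;X)} \leq \epsilon\,\|u_n-u_m\|_{L^p(0,t_f;X_0)} + C_\epsilon\,\|u_n-u_m\|_{L^p(0,t_f;X_1)}.$$
The first term is controlled by $2\epsilon M$ where $M$ bounds the sequence in $L^p(0,t_f;X_0)$. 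Everything therefore reduces to showing strong convergence $u_n \to u$ in $L^p(0,t_f;X_1)$, after which one lets $\epsilon \to 0$ to conclude Cauchyness in $L^p(0,t_f;X)$.

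The main obstacle is precisely this strong $L^p(0,t_f;X_1)$ convergence. I would exploit the derivative bound to obtain equicontinuity in $X_1$: for $0 \leq s<t\leq t_f$, H\"older's inequality gives
$$\|u_n(t)-u_n(s)\|_{X_1} = \Bigl\|\int_s^t u_n'(r)\,dr\Bigr\|_{X_1} \leq |t-s|^{1-1/q}\,\|u_n'\|_{L^q(0,t_f;X_1)},$$
which is a uniform H\"older modulus in the $X_1$-norm. At each fixed $t$ the values $u_n(t)$ sit in a bounded set of $X_0$ (after a careful regularization or via averages in $X_0$, needed because pointwise values are a priori only distributional), hence in a relatively compact subset of $X_1$. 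An Arzel\`a--Ascoli-type argument in $C([0,t_f];X_1)$, followed by dominated convergence upgraded by the uniform $L^p(0,t_f;X_0)$ bound, then delivers strong convergence in $L^p(0,t_f;X_1)$. The technical subtlety in justifying pointwise evaluation (and in the endpoint cases $p$ or $q = \infty$, which are not needed here) is where the proof requires genuine care; once past it, combining with Ehrling and sending $\epsilon \to 0$ finishes the argument.
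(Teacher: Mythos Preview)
Your proof sketch is a correct outline of the standard argument for the Aubin--Lions compactness lemma (Ehrling's inequality combined with an Arzel\`a--Ascoli/equicontinuity argument in $X_1$, then sending $\epsilon\to 0$). However, the paper does not prove this statement at all: it is quoted as a classical result from \cite{Lions1969}, with the conclusion ``Then the embedding of $W$ into $L^{p}([0, t_f] ; X)$ is compact'' stated immediately after the lemma environment, and no proof is supplied. So there is nothing in the paper to compare your argument against---you have written out a proof where the authors simply invoke the literature.
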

Then the embedding of  $W $ into  $L^{p}([0, t_f] ; X)$  is compact.
\begin{remark}
	Specifically, let  $X_{0}=H_{0}^{1}(\boldsymbol{\Omega}), X=L^{2}(\boldsymbol{\Omega}), X_{1}=H^{-1}(\boldsymbol{\Omega}), I=[0, t_f] $ and  $p=q=2$, then
\begin{align*}
		W=\left\{u \in L^{2}\left(I ; H_{0}^{1}(\boldsymbol{\Omega})\right): u^{\prime} \in L^{2}\left(I ; H^{-1}(\boldsymbol{\Omega})\right)\right\} \hookrightarrow \hookrightarrow L^{2}\left(I ; L^{2}(\boldsymbol{\Omega})\right)
\end{align*}
(where the above embedding is compact).
\end{remark}

Armed with the above lemmas, we are now ready to show the existence of a
solution of equation \eqref{2-15}--\eqref{cc}.
\begin{theorem}
	Given $p_0,T_0$ and $g,\phi $, the problem \eqref{2-15}--\eqref{cc} have at least one weak solution $(p,T)$.
\end{theorem}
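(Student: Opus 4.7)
The plan is a Rothe-type compactness argument on the equivalent reduced system \eqref{2-43}--\eqref{2-44}. I would promote the discrete family $\{p_n^i, T_n^i\}$ produced by the scheme \eqref{1-49}--\eqref{1-50} to continuous-in-time interpolants, derive uniform a priori bounds (including bounds on the time-derivative of the relevant storage combinations), invoke the Aubin--Lions lemma to extract a strongly convergent subsequence in $L^2(I;L^2(\Omega))$, and pass to the limit in the variational identities. Once the pair $(\op,\ot)$ is obtained, the displacement $\bu$ is recovered through the $B$-operator defined by \eqref{eq20231008-1}.

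First I would introduce, following the template \eqref{2-666}--\eqref{2-667}, the piecewise-constant and piecewise-linear interpolants
\begin{align*}
P_n(t)&:=p_n^i,\qquad \Tc_n(t):=T_n^i \qquad \text{for } t\in (t_{i-1},t_i],\\
\widetilde P_n(t)&:=p_n^{i-1}+(t-t_{i-1})\,\delta p_n^i,\qquad \widetilde\Tc_n(t):=T_n^{i-1}+(t-t_{i-1})\,\delta T_n^i,
\end{align*}
so that \eqref{1-49}--\eqref{1-50} becomes a continuous-in-time identity in $H^{-1}(\Omega)$ coupling $(\widetilde P_n,\widetilde\Tc_n)$ and $(P_n,\Tc_n)$. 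The preceding lemma already gives uniform bounds of $P_n,\Tc_n$ in $L^2(I;H_0^1(\Omega))$; testing \eqref{1-49}--\eqref{1-50} with $(p_n^i,T_n^i)$ and exploiting the monotonicity and self-adjointness of $B$ together with A3 ($c_0-b_0>0,\,a_0-b_0>0$) upgrades these to uniform $L^\infty(I;L^2(\Omega))$ bounds.

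Next I would obtain bounds on $\partial_t\widetilde P_n$ and $\partial_t\widetilde\Tc_n$ in $L^2(I;H^{-1}(\Omega))$. Testing \eqref{1-49} against $\varphi\in H_0^1(\Omega)$ with $\|\varphi\|_1\le 1$, the diffusion term is controlled via A1 by $k_M\|\nabla P_n\|\,\|\nabla\varphi\|$ and the source by $\|g\|$; this yields a uniform bound on $\partial_t(c_0\widetilde P_n-b_0\widetilde\Tc_n+\alpha B(\widetilde P_n,\widetilde\Tc_n))$, and the analogous argument on \eqref{1-50} handles $\partial_t(a_0\widetilde\Tc_n-b_0\widetilde P_n+\beta B(\widetilde P_n,\widetilde\Tc_n))$. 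Viewing these as a $2\times 2$ linear system in the time-derivatives and using A3 together with the continuity of $B$ (Lemma 2.5) isolates uniform $L^2(I;H^{-1})$ bounds on $\partial_t\widetilde P_n$ and $\partial_t\widetilde\Tc_n$. The Aubin--Lions lemma with $X_0=H_0^1$, $X=L^2$, $X_1=H^{-1}$ then produces (not relabelled) subsequences with
\[
\widetilde P_n\to\op,\qquad \widetilde\Tc_n\to\ot \quad\text{strongly in } L^2(I;L^2(\Omega)),
\]
and weakly in $L^2(I;H_0^1(\Omega))$; the comparison $\|P_n-\widetilde P_n\|_{L^2(I;L^2)}\le Cm^{1/2}$ transfers these convergences to $P_n,\Tc_n$, and continuity of $B$ gives $B(P_n,\Tc_n)\to B(\op,\ot)$ strongly in $L^2(I;L^2)$.

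The hard step will be passing to the limit in the nonlinear permeability term $\bigl(k(B(P_n,\Tc_n),P_n,\Tc_n)\nabla P_n,\nabla w\bigr)$. Strong $L^2(I;L^2)$ convergence yields pointwise a.e.\ convergence of $P_n,\Tc_n,B(P_n,\Tc_n)$ along a further subsequence; the Nemytskii continuity of $k$ combined with the uniform bound $k\le k_M$ from A1 and dominated convergence gives
\[
\hat k_n:=k(B(P_n,\Tc_n),P_n,\Tc_n)\longrightarrow \hat k:=k(B(\op,\ot),\op,\ot)\quad\text{in } L^q(I\times\Omega),\ q<\infty,
\]
in exact parallel with \eqref{2-60}--\eqref{2-61}. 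Splitting as in \eqref{2-55},
\[
(\hat k_n\nabla P_n,\nabla w)-(\hat k\nabla\op,\nabla w)=((\hat k_n-\hat k)\nabla P_n,\nabla w)+(\hat k\nabla(P_n-\op),\nabla w),
\]
the first piece tends to zero by H\"older with the $L^q$-convergence of $\hat k_n$ and the uniform $L^2$-bound on $\nabla P_n$, and the second vanishes by weak convergence $\nabla P_n\rightharpoonup\nabla\op$ tested against $\hat k\nabla w\in L^2(I;L^2)$. The remaining linear terms pass routinely by weak/strong convergence. Integrating against $\psi\in C_c^\infty(I)$ recovers \eqref{2-43}--\eqref{2-44} for $(\op,\ot)$; the displacement is defined by \eqref{eq20231008-1}, and the initial data \eqref{2-45}--\eqref{2-46} are inherited from the construction of $p_n^0,T_n^0$.
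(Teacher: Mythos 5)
Your proposal is correct and follows the same overall strategy as the paper: a Rothe/backward-Euler discretization, uniform energy bounds from the preceding lemma, extraction of a convergent subsequence, and a limit passage in which the nonlinear permeability term is handled by splitting $(\hat k_n\nabla P_n,\nabla w)-(\hat k\nabla p,\nabla w)$ exactly as in \eqref{2-55}, using the Nemytskii continuity of $k$ and the uniform bound $k\le k_M$. The one place where you genuinely diverge is the treatment of the time derivative. The paper never estimates $\partial_t P_n$ at all: it multiplies the discrete equations by $m\varphi(t_i)$, performs a discrete summation by parts so that the difference quotient lands on the test function (via the interpolants $\varphi_n,\tilde\varphi_n$ of \eqref{2-666}--\eqref{2-667}), and only then passes to the limit; the Aubin--Lions lemma is stated beforehand but the strong $L^2(I;L^2)$ convergence needed for $\hat k_n\to\hat k$ is essentially asserted. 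You instead introduce piecewise-linear interpolants, derive explicit $L^2(I;H^{-1})$ bounds on $\partial_t\widetilde P_n,\partial_t\widetilde T_n$ by inverting the $2\times2$ system for the storage combinations, and then invoke Aubin--Lions to \emph{justify} the strong convergence. Your route is more self-contained on precisely the point the paper glosses over; its cost is the inversion step, where you must check that the linear operator $(\dot p,\dot T)\mapsto(c_0\dot p-b_0\dot T+\alpha B(\dot p,\dot T),\,a_0\dot T-b_0\dot p+\beta B(\dot p,\dot T))$ is boundedly invertible on $H^{-1}$ and not just on $L^2$ --- the monotonicity and self-adjointness of $B$ together with A3 make this work, but it deserves a line of justification since Lemma \ref{lem2.5} only states $L^2$ and $H^1$ bounds for $B$. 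The paper's summation-by-parts device avoids this inversion entirely, at the price of leaving the strong compactness implicit.
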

\begin{proof}
	In the previous content, we have proved that \eqref{4-71}--\eqref{4-72} have at least one solution. For each positive integer $n$, define $P_n,T_n \in L^2(I,H_0^1(\Omega))$ as
	\begin{align}
		P_n(t)=p_n^i\qquad {\rm for } ~~t\in (t_{i-1},t_i] \quad i=1,2,\ldots,n,\no\\
		T_n(t)=T_n^i\qquad {\rm for } ~~t\in (t_{i-1},t_i] \quad i=1,2,\ldots,n.
	\end{align}
Recall that $\left \{ P_n\right \}_{n=1} ^{\infty},\left \{ T_n\right \}_{n=1} ^{\infty}$ is bounded in $L^2(I,H_0^1(\Omega))$,  hence there exist a pair $p, T \in L^2(I,H_0^1(\Omega))$ such that 
\begin{align}
	&P_n \to p \quad{\rm in }~~L^2(I,H_0^1(\Omega)),\\
	&	T_n \to T \quad{\rm in }~~L^2(I,H_0^1(\Omega)).
\end{align}
Multiplying \eqref{1-49}--\eqref{1-50} by $m \phi(t_i)$, summing up from $i=1$ to $n$, and using summation by parts, we obtain 
\begin{align}
	\sum_{i=1}^{n} (c_0 (p_n^i-p_n^{i-1})-b_0 (T_n^i-T_n^{i-1})+\alpha( B(p_n^i-p_n^{i-1},T_n^i-T_n^{i-1}),w_1)\varphi(t_i)\no\\
	=(c_0 p_n^n-b_0 T_n^n+\alpha B(p_n^n,T_n^n),w_1)\varphi(t_n)-(c_0 p_n^0-b_0 T_n^0+\alpha B(p_n^0,T_n^0),w_1)\varphi(t_1)
	\no\\-m \sum_{i=1}^{n-1}(c_0 p_n^i-b_0 T_n^i+\alpha B(p_n^i,T_n^i),w_1)(\varphi(t_{i+1})-\varphi(t_i))/m).
\end{align}
Since $\varphi(t_n)=0$, letting $n \to \infty$, $\varphi(t_1)=\varphi(m)=0 $, we have that
\begin{alignat}{2}
	-\sum_{i=1}^{n-1}( c_0 p_n^i-b_0 T_n^i+\alpha B(p_n^i,T_n^i),w_1)(\varphi(t_{i+1})-\varphi(t_i))\no\\+m\sum_{i=1}^{n}(k(B(p_n^i,T_n^i),p_n^i,T_n^i)\nabla p_n^i,\nabla w_1)\varphi(t_i)&=m\sum_{i=1}^{n}(g_n^i,w_1)\varphi(t_i),	 \no\\
	-\sum_{i=1}^{n-1}( a_0 T_n^i -b_0 p_n^i +\beta B(p_n^i,T_n^i),w_2)(\varphi(t_{i+1})-\varphi(t_i))\no\\+m\sum_{i=1}^{n}(\bt \nabla T_n^i,\nabla w_2)\varphi(t_i) &=m\sum_{i=1}^{n}(\phi,w_2)\varphi(t_i).
\end{alignat}
Then since $P_n,T_n,\varphi $ is all piecewise continuous, we can write the above equation as
\begin{align}
		-\int_{0}^{t_f} ( c_0 P_n-b_0 T_n+\alpha B(P_n,T_n),w_1)\tilde{\varphi}_n dt\no\\+\int_{0}^{t_f} (k(B(P_n,T_n),P_n,T_n)\nabla P_n,\nabla w_1)\varphi_ndt&=\int_{0}^{t_f} (g_n^i,w_1)\varphi_n	dt,\no\\
	-\int_{0}^{t_f} ( a_0 T_n -b_0 P_n +\beta B(P_n,T_n),w_2)\tilde{\varphi}_n dt\no\\+\int_{0}^{t_f} (\bt \nabla T_n,\nabla w_2)\varphi_ndt &=\int_{0}^{t_f} (\phi,w_2)\varphi_ndt.
\end{align}
Notice that 
\begin{align*}
	&\int_{0}^{t_f} (c_0 P_n-b_0 T_n+\alpha B(P_n,T_n),w_1)\tilde{\varphi}_n dt\\
	=&\int_{0}^{t_f} (c_0 P_n-b_0 T_n+\alpha B(P_n,T_n),w_1)(\tilde{\varphi}_n- \varphi') dt+\int_{0}^{t_f} (c_0 P_n-b_0 T_n+\alpha B(P_n,T_n),w_1){\varphi}' dt,\\
		&\int_{0}^{t_f} (a_0 T_n -b_0 P_n +\beta B(P_n,T_n),w_2)\tilde{\varphi}_n dt\\
	=&\int_{0}^{t_f} (a_0 T_n -b_0 P_n +\beta B(P_n,T_n),w_2)(\tilde{\varphi}_n- \varphi') dt+\int_{0}^{t_f} (a_0 T_n -b_0 P_n +\beta B(P_n,T_n),w_2){\varphi}' dt.
\end{align*}
From \eqref{2-666}--\eqref{2-667}, we have that
\begin{align}
	\left |  \int_{0}^{t_f} ( c_0 P_n-b_0 T_n+\alpha B(P_n,T_n),w_1)(\tilde{\varphi}_n- \varphi') dt\right | \le \Norm{w_1}{} \Norm{P_n}{} \Norm{T_n}{} \Norm{\tilde{\varphi}_n- \varphi'}{L^2(I)}\to 0,\no\\
	\left |  \int_{0}^{t_f} ( a_0 T_n -b_0 P_n +\beta B(P_n,T_n),w_2)(\tilde{\varphi}_n- \varphi') dt\right | \le \Norm{w_1}{} \Norm{P_n}{} \Norm{T_n}{} \Norm{\tilde{\varphi}_n- \varphi'}{L^2(I)}\to 0,\no
\end{align}
and  let $n \to \infty$
\begin{align*}
	\int_{0}^{t_f} (c_0 P_n-b_0 T_n+\alpha B(P_n,T_n),w_1){\varphi}' dt \to \int_{0}^{t_f} (c_0 p-b_0 t+\alpha B(p,t),w_1){\varphi}' dt\\
	=-\int_{0}^{t_f} (c_0 p-b_0 t+\alpha B(p,t)',w_1){\varphi} dt.
\end{align*}
Hence, we have
\begin{align*}
		-\int_{0}^{t_f} (c_0 P_n-b_0 T_n+\alpha B(P_n,T_n),w_1)\tilde{\varphi}_n dt \to \int_{0}^{t_f} (c_0 p-b_0 t+\alpha B(p,t)',w_1){\varphi} dt,\\
				-\int_{0}^{t_f} (a_0 T_n -b_0 P_n +\beta B(P_n,T_n),w_2)\tilde{\varphi}_n dt \to \int_{0}^{t_f} (a_0 t -b_0 t +\beta B(p,t),w_2)'{\varphi} dt.
	\end{align*}
Letting $k_n$ denote $k(B(P_n,T_n),P_n,T_n)$, we can get
\begin{align*}
	&\int_{0}^{t_f} (k_n\nabla P_n,\nabla w_1)\varphi_ndt-\int_{0}^{t_f} (k\nabla P,\nabla w_1)\varphi dt\\
	= & \int_{0}^{t_f} (k_n\nabla P_n,\nabla w_1)(\varphi_n-\varphi)dt+\int_{0}^{t_f} (k\nabla (P_n-p),\nabla w_1)\varphi dt\\
	+& \int_{0}^{t_f} ((k_n-k)\nabla P_n,\nabla w_1)\varphi_ndt\\
	\le& k_M \Norm{w_1}{} \Norm{P_n}{} \Norm{T_n}{} \Norm{\tilde{\varphi}_n- \varphi'}{L^2(I)}+C\Norm{k_n-k}{L^2(I;L^2(\Omega))} \Norm{P_n}{} \Norm{T_n}{}\\
	\to &0,\\
	&\int_{0}^{t_f} (\bt\nabla T_n,\nabla w_2)\varphi_ndt-\int_{0}^{t_f} (\bt\nabla t,\nabla w_2)\varphi dt\\
	=&\int_{0}^{t_f} \bt(\nabla (T_n-t),\nabla w_2)dt\\
	\to 0.
\end{align*}
It is straightforward to show that 
\begin{align*}
	\int_{0}^{t_f} (g_n,w_1)\varphi_{n}dt=
	\int_{0}^{t_f} (g(t),w_1)\varphi_{n}(t)dt
	\to \int_{0}^{t_f} (g,w_1)\varphi dt,\\
		\int_{0}^{t_f} (\phi_n,w_2)\varphi_{n}dt=
	\int_{0}^{t_f} (\phi(t),w_2)\varphi_{n}(t)dt
	\to \int_{0}^{t_f} (\phi,w_2)\varphi dt.
\end{align*}
Letting $n \to \infty$, using the above conclusion, we conclude that for any test function $\hat{\bw}$ of the form $\bw \varphi$ satisfying \eqref{2-43}--\eqref{2-46}. The proof is complete.
\end{proof}

\subsubsection{Existence of reformulated model solution}
In this section, we prove the existence of the reformulated model.

From subsection \ref{sub2.3.1}, we can find a series construct sequences $ \left\lbrace \bu_n\right\rbrace _{n=0}^{\infty}\subset H^{1}(\Omega) $, $ \left\lbrace p_n\right\rbrace _{n=0}^{\infty}\subset L^{2}(\Omega) $, $ \left\lbrace T_n\right\rbrace _{n=0}^{\infty}\subset L^{2}(\Omega) $ which satisfy 
	\begin{alignat}{2}
	&\mu\bigl(\nabla\mathbf{u_n},\nabla\mathbf{v} \bigr)+(\lambda+\mu)(\mathrm{div}\mathbf{u_n},\mathrm{div}\mathbf{v})
	-\alpha( p_n,\mathrm{div} \mathbf{v})-\beta (T_n,\mathrm{div} \bv)=(\mathbf{f},\mathbf{v})+\langle \mathbf{f}_1,\mathbf{v}\rangle
	&&\quad\forall \mathbf{v}\in \mathbf{H}^1(\Omega),\no\\
	&((c_0 p_n - b_0 T_n+\alpha\mathrm{div}\mathbf{u_n})_t,\varphi)
	+ (k(\mathbf{u_n},p_n,T_n)\nabla p_n,\nabla \varphi )=(g,\varphi)
	+\langle g_1,\varphi \rangle
	&&\quad\forall \varphi \in H^1(\Omega),\no\\
	&((a_0 T -b_0 P_n +\beta \mathrm{div} \mathbf{u_n})_{t},y)+(\boldsymbol{\Theta} \nabla T_n,\nabla y) =(\phi,y)+\left \langle \phi_1,y \right \rangle
	&&\quad\forall \psi \in H^1(\Omega), \no
\end{alignat}
\begin{align*}
		\lim_{n\rightarrow \infty}\nabla\cdot\bu_n=\nabla\cdot\bu,\quad \lim_{n\rightarrow \infty}\bu_n=\bu,\quad 
	\lim_{n\rightarrow \infty}p_n=p,\quad \lim_{n\rightarrow \infty}T_n=T,
\end{align*}
and $\bu,p,T$ satisfy
	\begin{alignat}{2}
	&\mu\bigl(\nabla\mathbf{u},\nabla\mathbf{v} \bigr)+(\lambda+\mu)(\mathrm{div}\mathbf{u},\mathrm{div}\mathbf{v})
	-\alpha( p,\mathrm{div} \mathbf{v})-\beta (T,\mathrm{div} \bv)=(\mathbf{f},\mathbf{v})+\langle \mathbf{f}_1,\mathbf{v}\rangle
	&&\quad\forall \mathbf{v}\in \mathbf{H}^1(\Omega),\no\\
	&((c_0 p - b_0 T+\alpha\mathrm{div}\mathbf{u})_t,\varphi)
	+ (k(\mathbf{u},p,T)\nabla p,\nabla \varphi )=(g,\varphi)
	+\langle g_1,\varphi \rangle
	&&\quad\forall \varphi \in H^1(\Omega),\no\\
	&((a_0 T -b_0 P +\beta \mathrm{div} \bu)_{t},y)-(\boldsymbol{\Theta} \nabla T,\nabla y) =(\phi,y)+\left \langle \phi_1,y \right \rangle
	&&\quad\forall \psi \in H^1(\Omega).\no
\end{alignat}
Define \[
q_n:=\mathrm{div} \mathbf{u_n},\quad \varpi_n:=c_{0}p_n-b_0 T_n+\alpha q_n,\quad 
\tau_n:=\alpha p_n -(\lambda+\mu) q_n+\beta T_n,\quad 
\varsigma_n:=a_0 T_n -b_0 p_n +\beta q_n,
\]
So we have that
	\begin{alignat}{2}
	&\mu(\nabla\mathbf{u_n},\nabla\mathbf{v})-(\tau_n,\mathrm{div}\mathbf{v})= (\mathbf{f},\mathbf{v})+\langle \mathbf{f}_1,\mathbf{v}\rangle
	&&\quad\forall \mathbf{v}\in \mathbf{H}^1(\Omega),\\
	&\gamma_6(\tau_n,\varphi) +(\mathrm{div} \mathbf{u_n},\varphi) -\gamma_4(\varpi_n,\varphi)-\gamma_{1} (\varsigma_n,\varphi)=0 &&\quad\forall \varphi \in L^2(\Omega),  \\
	&\bigl(\varpi_n, y \bigr)_t+(k(\tau_n)\nabla (\gamma_4 \tau_n + \gamma_5 \varpi_n+\gamma_2 \varsigma_n ),\nabla y \bigr)= (g, y)+\langle g_1,y\rangle &&\quad\forall y \in H^1(\Omega),  \\
	&(\varsigma_n,z)_{t}+( (\boldsymbol{\Theta} \nabla (\gamma_{1} \tau_n +\gamma_{2} \varpi_n+\gamma_{3} \varsigma_n),\nabla z)=(\phi,z)+\left \langle \phi_1,z \right \rangle &&\quad\forall z \in H^1(\Omega).
\end{alignat}
Then there exists $\bu \subset H^{1}(\Omega),\tau \subset L^{2}(\Omega),\varpi\subset L^{2}(\Omega),\varsigma\subset L^{2}(\Omega) $ satisfy 
\begin{align*}
	\lim_{n\rightarrow \infty}\bu_n=\bu,\quad 
	\lim_{n\rightarrow \infty}\tau_n=\tau,\quad \lim_{n\rightarrow \infty}\varpi_n=\varpi,\quad \lim_{n\rightarrow \infty}\varsigma_n=\varsigma.
\end{align*}
From these above conclusions, we have that
\begin{alignat}{2}
	&\lim_{n\rightarrow \infty}\mu(\nabla\mathbf{u_n},\nabla\mathbf{v})-(\tau_n,\mathrm{div}\mathbf{v})\no\\
	=&\mu(\nabla\mathbf{u},\nabla\mathbf{v})-(\tau,\mathrm{div}\mathbf{v})\no,\\
	&\lim_{n\rightarrow \infty}\gamma_6(\tau_n,\varphi) +(\mathrm{div} \mathbf{u_n},\varphi) -\gamma_4(\varpi_n,\varphi)-\gamma_{1} (\varsigma_n,\varphi)\no\\=&
	\gamma_6(\tau,\varphi) +(\mathrm{div} \mathbf{u},\varphi) -\gamma_4(\varpi,\varphi)-\gamma_{1} (\varsigma,\varphi)\no,\\
	&\lim_{n\rightarrow \infty}(\varsigma_n,z)_{t}+( (\boldsymbol{\Theta} \nabla (\gamma_{1} \tau_n +\gamma_{2} \varpi_n+\gamma_{3} \varsigma_n),\nabla z)\no\\=&
	(\varsigma,z)_{t}+( (\boldsymbol{\Theta} \nabla (\gamma_{1} \tau +\gamma_{2} \varpi+\gamma_{3} \varsigma),\nabla z).\label{2-85}
\end{alignat}
Then
\begin{alignat}{2}
	&\lim_{n\rightarrow \infty} (k(\tau_n)\nabla (\gamma_4 \tau_n + \gamma_5 \varpi_n+\gamma_2 \varsigma_n ),\nabla y \bigr)-(k(\tau)\nabla (\gamma_4 \tau + \gamma_5 \varpi+\gamma_2 \varsigma ),\nabla y \bigr)\no\\=&\lim_{n\rightarrow \infty}
	((k(\tau_n)-k(\tau))\nabla (\gamma_4 \tau_n + \gamma_5 \varpi_n+\gamma_2 \varsigma_n ),\nabla y \bigr)\no\\&+(k(\tau)\nabla (\gamma_4 (\tau-\tau_n) + \gamma_5 (\varpi-\varpi_n)+\gamma_2 (\varsigma-\varsigma_n)
	 ),\nabla y \bigr)\no,\\
	 =&\lim_{n\rightarrow \infty}
	 (-e^{-\tau_n}(\tau_n-\tau)\nabla (\gamma_4 \tau_n + \gamma_5 \varpi_n+\gamma_2 \varsigma_n ),\nabla y \bigr)\no\\&+(k(\tau)\nabla (\gamma_4 (\tau-\tau_n) + \gamma_5 (\varpi-\varpi_n)+\gamma_2 (\varsigma-\varsigma_n)
	 ),\nabla y \bigr)\no,\\
	 \rightarrow&0,
\end{alignat}
so we have that
\begin{alignat}{2}
	&\lim_{n\rightarrow \infty}\bigl(\varpi_n, y \bigr)_t+(k(\tau_n)\nabla (\gamma_4 \tau_n + \gamma_5 \varpi_n+\gamma_2 \varsigma_n ),\nabla y \bigr)\no\\
	=&\bigl(\varpi, y \bigr)_t+(k(\tau)\nabla (\gamma_4 \tau + \gamma_5 \varpi+\gamma_2 \varsigma ),\nabla y \bigr)\no,\\
	&\lim_{n\rightarrow \infty}(\varsigma_n,z)_{t}+( \boldsymbol{\Theta} \nabla (\gamma_{1} \tau_n +\gamma_{2} \varpi_n+\gamma_{3} \varsigma_n),\nabla z)\no\\
	=&(\varsigma,z)_{t}+( \boldsymbol{\Theta} \nabla (\gamma_{1} \tau +\gamma_{2} \varpi+\gamma_{3} \varsigma),\nabla z)\label{2-86}.
\end{alignat}
Combing \eqref{2-85} and \eqref{2-86}, we get
\begin{alignat}{2}
	&\mu(\nabla\mathbf{u},\nabla\mathbf{v})-(\tau,\mathrm{div}\mathbf{v})= (\mathbf{f},\mathbf{v})+\langle \mathbf{f}_1,\mathbf{v}\rangle
	&&\quad\forall \mathbf{v}\in \mathbf{H}^1(\Omega),\\
	&\gamma_6(\tau,\varphi) +(\mathrm{div} \mathbf{u},\varphi) -\gamma_4(\varpi,\varphi)-\gamma_{1} (\varsigma,\varphi)=0 &&\quad\forall \varphi \in L^2(\Omega),  \\
	&\bigl(\varpi, y \bigr)_t+(k(\tau)\nabla (\gamma_4 \tau + \gamma_5 \varpi+\gamma_2 \varsigma ),\nabla y \bigr)= (g, y)+\langle g_1,y\rangle &&\quad\forall y \in H^1(\Omega),  \\
	&(\varsigma,z)_{t}+( (\boldsymbol{\Theta} \nabla (\gamma_{1} \tau +\gamma_{2} \varpi+\gamma_{3} \varsigma),\nabla z)=(\phi,z)+\left \langle \phi_1,z \right \rangle &&\quad\forall z \in H^1(\Omega).
\end{alignat}
This show  $ (\bu,\tau,\varpi,\varsigma) $ is a weak solution of \eqref{2-18}--\eqref{2-22}, the proof is complete.

\subsubsection{Uniqueness}
In this section, we give some conditions and guarantee the uniqueness of solutions of both original and reformulated equation.
\begin{theorem}Assume:\\ $\sqrt{\frac{c_0 k_m}{C_p^2}-C_1(\epsilon)-M} \sqrt{\frac{\theta_m a_0 }{C_T^2}-C_2(\epsilon)-M}/(M |b_0-1|)\ge 1$, $M=max\left\{k_M,\theta_M\right\}$,\\
	 then the solution $p,T$ of equation \eqref{2-43}--\eqref{2-44} is unique.
\end{theorem}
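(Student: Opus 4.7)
The plan is to prove uniqueness by the standard energy/subtraction argument, adapted to handle the nonlinear diffusion $k(B(p,T),p,T)$. Let $(p_1,T_1)$ and $(p_2,T_2)$ be two weak solutions of \eqref{2-43}--\eqref{2-44} sharing the same data, and set $p := p_1 - p_2$, $T := T_1 - T_2$. By Lemma \ref{lem2.5} the operator $B$ is linear, so $B(p,T) = B(p_1,T_1) - B(p_2,T_2)$. Subtracting the two weak formulations yields difference equations in $(p,T)$ with zero initial data.

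The first step is to test the pressure difference equation with $\varphi = p$ and the temperature difference equation with $y = T$, then add the two identities. The time-derivative terms collapse into
\[
\tfrac{1}{2}\tfrac{d}{dt}\Bigl[c_0\|p\|^2 + a_0\|T\|^2 + \alpha(B(p,T),p) + \beta(B(p,T),T)\Bigr] - 2b_0\tfrac{d}{dt}(p,T),
\]
where by Lemma \ref{lem2.5} the $B$-contributions are non-negative and can be absorbed into a natural energy $E(t)\ge 0$, while the mixed $b_0$-term is controlled using assumption A3 ($c_0 - b_0 > 0$, $a_0 - b_0 > 0$).

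The main technical step is the nonlinear diffusion term. Writing $k_i := k(B(p_i,T_i),p_i,T_i)$ and splitting
\[
k_1\nabla p_1 - k_2\nabla p_2 = k_1\nabla p + (k_1 - k_2)\nabla p_2,
\]
assumption A1 yields the coercive contribution $k_m\|\nabla p\|^2$. The residual $(k_1-k_2)\nabla p_2$ is handled via Lipschitz continuity of $k$ (which follows from its explicit exponential form $k = ak_0\, e^{-b((\lambda+\mu)\mathrm{div}\mathbf{u} - \alpha p - \beta T)}$ together with the a priori boundedness of the arguments coming from the regularity class of Definition \ref{weak1}), followed by Cauchy--Schwarz and Young's inequality with parameter $\epsilon$. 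This produces absorbable gradient terms together with mass terms $C_1(\epsilon)\|p\|^2$, $C_2(\epsilon)\|T\|^2$, which are exactly the constants appearing in the hypothesis. The same splitting applied to the temperature equation, using A2 and $\theta_m,\theta_M$, yields the analogous $\theta_m\|\nabla T\|^2$ coercivity. Invoking Poincar\'e's inequality with constants $C_p$ and $C_T$, the coercive part becomes $\tfrac{c_0 k_m}{C_p^2}\|p\|_1^2 + \tfrac{\theta_m a_0}{C_T^2}\|T\|_1^2$, while the residual cross coupling between the two equations is bounded by $M|b_0-1|\|p\|_1\|T\|_1$ with $M := \max\{k_M,\theta_M\}$.

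Collecting everything, I obtain a differential inequality of the form $\tfrac{d}{dt}E(t) + Q(\|p\|_1,\|T\|_1) \le 0$, where $Q$ is a $2\times 2$ quadratic form with diagonal entries $\tfrac{c_0 k_m}{C_p^2} - C_1(\epsilon) - M$ and $\tfrac{\theta_m a_0}{C_T^2} - C_2(\epsilon) - M$ and off-diagonal entry $M|b_0-1|$. The hypothesis of the theorem is precisely the Cauchy--Schwarz condition that forces this matrix to be positive semi-definite, so $Q\ge 0$. Since $E(0) = 0$, integration in time yields $E(t)\equiv 0$, hence $p\equiv 0$ and $T\equiv 0$. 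I expect the main obstacle to be the rigorous Lipschitz estimate on $k$: obtaining a uniform $L^\infty$ bound on $B(p_i,T_i)$, $p_i$, $T_i$ --- needed to make the exponential Lipschitz constant finite --- is delicate in the weak class of Definition \ref{weak1}, and may require extra regularity, a truncation of $k$, or an a priori smallness on the data to ensure that $\epsilon$ can be chosen so the quadratic form is genuinely non-negative.
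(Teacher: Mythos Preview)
Your overall architecture (subtract, test, collect a quadratic form, apply Gronwall) is correct, but your choice of test functions is different from the paper's and does not produce the constants that appear in the theorem's hypothesis. You test the pressure difference equation with $\varphi=p$ and the temperature difference equation with $y=T$. The paper instead tests with the \emph{composite} functions
\[
w=\varpi_1-\varpi_2=c_0 p-b_0 T+\alpha B(p,T),\qquad
v=\varsigma_1-\varsigma_2=a_0 T-b_0 p+\beta B(p,T).
\]
With this choice the time-derivative terms become exactly $\tfrac12\tfrac{d}{dt}\|w\|^2$ and $\tfrac12\tfrac{d}{dt}\|v\|^2$, with no cross terms to manage, and Gronwall is applied to $\|w\|^2+\|v\|^2$ (concluding $\varpi_1=\varpi_2$, $\varsigma_1=\varsigma_2$, hence $p_1=p_2$, $T_1=T_2$). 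More importantly, it is this test that puts the factor $c_0$ in front of the coercive diffusion term $(k_2\nabla p,\nabla(c_0 p))\ge c_0 k_m\|\nabla p\|^2$ and generates the $b_0$ cross term $(k_2\nabla p,\nabla(-b_0 T))$, together with the analogous $a_0\theta_m$ and $b_0$ contributions from the temperature equation. After Poincar\'e and the treatment of the $B$-terms these combine into exactly the diagonal entries $\tfrac{c_0 k_m}{C_p^2}-C_1(\epsilon)-M$, $\tfrac{\theta_m a_0}{C_T^2}-C_2(\epsilon)-M$ and the off-diagonal $M|b_0-1|$ of the hypothesis.

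By contrast, testing with $p$ and $T$ as you propose yields the diffusion bound $k_m\|\nabla p\|^2+\theta_m\|\nabla T\|^2$ with \emph{no} $c_0,a_0$ factors and \emph{no} $b_0$ cross term from the elliptic part; the $c_0,a_0,b_0$ then sit only in the energy $E(t)$ on the time-derivative side. So the sentence ``the coercive part becomes $\tfrac{c_0 k_m}{C_p^2}\|p\|_1^2+\tfrac{\theta_m a_0}{C_T^2}\|T\|_1^2$'' is not what your test actually gives, and the resulting sufficient condition would differ from the one stated. Your route can be made into a valid uniqueness proof, but for a theorem with a different hypothesis; to prove the theorem as written you should switch to the composite test functions $w,v$. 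Your final remark about the Lipschitz bound on $k$ is well taken: the paper simply invokes a constant $L_k$ without deriving the needed $L^\infty$ control, so that concern is shared by both approaches.
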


{\em Proof.}
	Use $k_1,k_2$ denote $k(B(p_1,T_1),p_1,T_1),k(B(p_2,T_2),p_2,T_2) $ and suppose $p_1,T_1,p_2,T_2$ respectively satisfy \eqref{2-43} and \eqref{2-44},i.e.
	\begin{alignat}{2}
		(c_0p_1 -b_0 T_1 +\alpha B(p_1,T_1)_t,w)+(k_1 \nabla p_1,\nabla w)=({g},w)\qquad\forall w \in H_{0}^{1}(\boldsymbol{\Omega})\label{2-66},\\
		((a_0T_1 -b_0 p_1 +\beta B(p_1,T_1))_t,v)+(\bt\nabla T_1,\nabla v)=({\phi},v)\qquad\forall v \in H_{0}^{1}(\boldsymbol{\Omega})\label{2-67}.
	\end{alignat}
	\begin{alignat}{2}
		(c_0p_2 -b_0 T_2 +\alpha B(p_2,T_2)_t,w)+(k_2 \nabla p_2,\nabla w)=({g},w)\qquad\forall w \in H_{0}^{1}(\boldsymbol{\Omega})\label{2-68},\\
		((a_0T_2 -b_0 p_2 +\beta B(p_2,T_2))_t,v)+(\bt\nabla T_2,\nabla v)=({\phi},v)\qquad\forall v \in H_{0}^{1}(\boldsymbol{\Omega})\label{2-69}.
	\end{alignat}
Subtracting  \eqref{2-68}--\eqref{2-69} from \eqref{2-66}--\eqref{2-67} and set $w=\varpi_1-\varpi_2=c_0(p_1-p_2) -b_0 (T_1-T_2) +\alpha B(p_1-p_2,T_1-T_2)$, $v=\varsigma_1-\varsigma_2=a_0(T_1-T_2) -b_0 (p_1-p_2) +\beta B(p_1-p_2,T_1-T_2)$. 
We recall that 
\begin{alignat}{2}
	&(c_0(p_1-p_2) -b_0 (T_1-T_2) +\alpha B(p_1-p_2,T_1-T_2)_t,w)\\=&\frac{1}{2}\frac{d}{dt} \left \| c_0(p_1-p_2) -b_0 (T_1-T_2) +\alpha B(p_1-p_2,T_1-T_2) \right \|^2 \no\\
	=&\frac{1}{2} \frac{d}{dt}\Norm{\varpi_1-\varpi_2}{}^2\no,\\
	&(a_0(T_1-T_2) -b_0 (p_1-p_2) +\beta B(p_1-p_2,T_1-T_2),v)\\=&\frac{1}{2}\frac{d}{dt} \left \| a_0(T_1-T_2) -b_0 (p_1-p_2) +\beta B(p_1-p_2,T_1-T_2) \right \|^2 \no\\
		=&\frac{1}{2} \frac{d}{dt}\Norm{\varsigma_1-\varsigma_2}{}^2\no.
\end{alignat}
We can get  
\begin{alignat}{2}
	\frac{1}{2}\frac{d}{dt} \left \| c_0(p_1-p_2) -b_0 (T_1-T_2) +\alpha B(p_1-p_2,T_1-T_2) \right \|^2\label{2-72}\\+((k_1 - k_2) \nabla p_1,\nabla w)+(k_2 \nabla(p_1-p_2),\nabla w)=0,\no\\
	\frac{1}{2}\frac{d}{dt} \left \| a_0(T_1-T_2) -b_0 (p_1-p_2) +\beta B(p_1-p_2,T_1-T_2) \right \|^2\label{2-73}\\+(\bt \nabla(T_1-T_2),\nabla v)=0,\no
\end{alignat}
which implies that 
\begin{alignat}{2}
	&\left | (k_1 - k_2) \nabla p_1,\nabla (c_0(p_1-p_2) -b_0 (T_1-T_2) +\alpha B(p_1-p_2,T_1-T_2) \right | \no\\
	\le &L_k\epsilon  \Norm{k_1-k_2}{}+C(\epsilon )\Norm{c_0(p_1-p_2) -b_0 (T_1-T_2) +\alpha B(p_1-p_2,T_1-T_2)}{1}^2\no\\
	\le &\epsilon_1 \Norm{p_1-p_2}{} +\epsilon_2 \Norm{T_1-T_2}{} +C_1(\epsilon )\Norm{p_1-p_2}{1}^2
	+C_2(\epsilon )\Norm{T_1-T_2}{1}^2,\label{2-74}
\end{alignat}
and
\begin{alignat}{2}
	(k_2 \nabla(p_1-p_2),\nabla (c_0(p_1-p_2) -b_0 (T_1-T_2) +\alpha B(p_1-p_2,T_1-T_2))\label{2-75}\\
	\ge \frac{c_0 k_m}{C_p^2} \Norm{p_1-p_2}{1}^2-k_M b_0\Norm{p_1-p_2}{1}\Norm{T_1-T_2}{1}+\alpha k_2(\nabla(p_1-p_2),\nabla B(p_1-p_2,T_1-T_2)\no,\\
	(\bt \nabla(T_1-T_2),\nabla (a_0(T_1-T_2) -b_0 (p_1-p_2) +\beta B(p_1-p_2,T_1-T_2)))\label{2-76}\\
	\ge \frac{\theta_m a_0 }{C_T^2}\Norm{T_1-T_2}{1}^2-\theta^m b_0\Norm{p_1-p_2}{1}\Norm{T_1-T_2}{1}+\beta \theta(\nabla(T_1-T_2),\nabla B(p_1-p_2,T_1-T_2). \no
\end{alignat}
Substituting \eqref{2-74}--\eqref{2-76} into \eqref{2-72}--\eqref{2-73}  and adding it with  $M={\rm max}\{k_M,\theta_M\}$, we can get 
\begin{alignat}{2}
	\frac{1}{2}\frac{d}{dt} \left \| w \right \|^2+\frac{1}{2}\frac{d}{dt} \left \| v \right \|^2-\epsilon_1 \Norm{p_1-p_2}{} -\epsilon_2 \Norm{T_1-T_2}{}\\ \no+
	(\frac{c_0 k_m}{C_p^2}-C_1(\epsilon)-M) \Norm{p_1-p_2}{1}^2+
	(\frac{\theta_m a_0 }{C_T^2}-C_2(\epsilon)-M)\Norm{T_1-T_2}{1}^2\no\\-2M (b_0-1)\Norm{p_1-p_2}{1}\Norm{T_1-T_2}{1}\no \le 0.
\end{alignat}

From the above conclusion, we have when $M |b_0-1| \le \sqrt{\frac{c_0 k_m}{C_p^2}-C_1(\epsilon)-M} \sqrt{\frac{\theta_m a_0 }{C_T^2}-C_2(\epsilon)-M}$:
\begin{alignat}{2}
	\frac{1}{2}\frac{d}{dt} \left \| w \right \|^2+\frac{1}{2}\frac{d}{dt} \left \| v \right \|^2\le \epsilon_1 \Norm{p_1-p_2}{} +\epsilon_2 \Norm{T_1-T_2}{},
\end{alignat}
and that is
\begin{alignat}{2}
		\frac{1}{2}\frac{d}{dt} \left \| w \right \|^2 \le C \left \| w \right \|^2,\\
		\frac{1}{2}\frac{d}{dt} \left \| v \right \|^2\le C \left \| v \right \|^2.
\end{alignat}
Now Gronwall's inequality implies that $w=v=0$, which in turns that 
\begin{align*}
	c_0(p_1-p_2) -b_0 (T_1-T_2) +\alpha B(p_1-p_2,T_1-T_2)=0,\\
	a_0(T_1-T_2) -b_0 (p_1-p_2) +\beta B(p_1-p_2,T_1-T_2)=0,\\
	\varpi_1-\varpi_2=0,\qquad \varsigma_1-\varsigma_2=0.
\end{align*}
Then it's easy to confirm
 $$\varpi_1=\varpi_2,\varsigma_1=\varsigma_2,$$$$p_1=p_2,T_1=T_2.$$
\begin{remark}
	After obtaining $p,T$,we can solve $\bu$ from the \eqref{2-15}, thus linear dependence of $\bu$ on $p,T$ guarantee the uniqueness of $\bu$ as long as $p,T$ is
	unique.
\end{remark}
\begin{remark}
	After obtaining $\varpi,\varsigma$,we can solve $\bu,\tau$ from the \eqref{2-18}--\eqref{2-19}, thus linear dependence of $\bu,\tau$ on $\varpi,\varsigma$ guarantee the uniqueness of $\bu,\tau$ as long as $\varpi,\varsigma$ is
	unique.
\end{remark}

\section{A new mixed finite element method}\label{sec-3}
In this section, the time interval $[0, t_f]$ is divided into $N$ equal intervals, denoted by $[t_{n-1}, t_{n}], ~ n=1,2,\ldots,N$,  and $\Delta t=\frac{t_f}{N}$, then $t_n=n\Delta t$. We use backward Euler method and denote $ d_{t} v^{n}:=\frac{v^{n}-v^{n-1}}{\Delta t}$. Let $\mathcal{T}_h$ be a quasi-uniform triangulation or rectangular partition of $\Omega$ with maximum mesh size $h$, and  $\bar{\Omega}=\bigcup_{\mathcal{K}\in\mathcal{T}_h}\bar{\mathcal{K}}$. The triangulation is constructed by connecting the midpoints
of each side to uniformly refine the rough grid $T_{H}$ to form a fine grid $T_{h}$, where $T_{H}$ represents the coarse triangular mesh, $T_{h}$ represents the fine triangular mesh. Fine mesh $T_{h}$ is obtained after mesh refinement on $T_{H}$, where $T_{H}$ is the coarse grid size and $T_{h}$ is the fine grid size. The above-mentioned triangulation finite element space pairs are $(\mathbf{X}_h, M_h)$ with  $\mathbf{X}_h\subset \mathbf{H}^1(\Omega)$ and $M_h\subset L^2(\Omega)$. 
In general, to solve the stokes problem, we usually need to choose stable finite element pairs that satisfy the inf-sup condition, that is,   $\mathbf{X}_h$ and $M_h$ 
satisfy 
\begin{alignat}{2}\label{3.1}
	\sup_{\mathbf{v}_h\in \mathbf{X}_h}\frac{({\mathrm{div}} \mathbf{v}_h,\varphi_h)}{\|\mathbf{v}_h\|_{H^1(\Omega)}}
	\geq \beta_0\|\varphi_h\|_{L^2(\Omega)} &&\quad \forall~\varphi_h\in M_{0h}:=M_h\cap L_0^2(\Omega),\ \beta_0>0,
\end{alignat}
so that we can ensure the well-posedness and accuracy of the discrete problem.
But if we consider our reformulated model \eqref{2-5}--\eqref{2-6}, which satisfy $\nabla \cdot \bu$ is not identically zero, then we can avoid the saddle point problem when solving for the displacement, that is, we do not need to consider the inf-sup condition anymore, can  choose arbitrary finite element spaces for $\bu$ and $\tau$. 
We will choose the following finite element pairs in our analysis and numerical tests,
$P_k-P_j$ element:
\begin{align*}
	\mathbf{X}_{h} &=\bigl\{\mathbf{v}_h\in \mathbf{C}^0(\overline{\Omega});\,
	\mathbf{v}_h|_\mathcal{K}\in \mathbf{P}_k(\mathcal{K})~~\forall \mathcal{K}\in T_h \bigr\},\\
	M_{h} &=\bigl\{\varphi_h\in C^0(\overline{\Omega});\,\varphi_h|_\mathcal{K}\in P_j(\mathcal{K})
	~~\forall \mathcal{K}\in T_h \bigr\},
\end{align*}
when $k=2,j=1$, it is called Taylor-Hood element  that satisfy \eqref{3.1} (cf.  \cite{ber,brezzi}).\\
Finite element approximation space $W_h,Z_h$ for $\varpi,\varsigma$ variable can be chosen independently, any piecewise polynomial space is acceptable provided that
$W_h \supset M_h$, the most convenient choice is $Z_h= W_h =M_h$. Moreover,we 
define
\begin{equation}\label{3.2}
	\mathbf{V}_h:=\bigl\{\mathbf{v}_h\in \mathbf{X}_h;\,  (\mathbf{v}_h,\mathbf{r})=0\,\,
	\forall \mathbf{r}\in \mathbf{RM} \bigr\}.
\end{equation}
It is easy to check that $\mathbf{X}_h=\mathbf{V}_h\bigoplus \mathbf{RM}$. It was proved in  \cite{Feng2010}
that there holds the following inf-sup condition:
\begin{align}\label{3.3}
	\sup_{\mathbf{v}_h\in \mathbf{V}_h}\frac{(\mathrm{div}\mathbf{v}_h,\varphi_h)}{\left\|\mathbf{v}_h\right\|_{H^1(\Omega)}} 
	\geq \beta_1\left\|\varphi_h\right\|_{L^2(\Omega)} \quad \forall \varphi_h\in M_{h},\quad \beta_1>0.
\end{align}
Also, we recall the following inverse inequality for polynomial functions  \cite{bs08,brezzi,cia}:
\begin{align}
	\left\|\nabla\varphi_{h} \right\|_{L^{2}(\mathcal{K})}\leq c_{1}h^{-1}\left\|\varphi_{h} \right\|_{L^{2}(\mathcal{K})}~~~~~\forall\varphi_{h}\in P_{r}(\mathcal{K}), ~\mathcal{K}\in T_{h}.\label{3.4} 
\end{align}
Next we propose the fully discrete new mixed finite element algorithm for the problem \eqref{2-5}--\eqref{2-9} as follows:

$\mathbf{Multiphysics~finite ~element~ algorithm ~(MAFEA):}$
\begin{itemize}
	\item [Step 1:] Compute $\mathbf{u}^{0}_{h}\in \mathbf{V}_{h}$ and $q^{0}_{h}\in W_h$ by 
\begin{align*}
	\mathbf{u}^{0}_{h} &=\mathcal{R}_{h}\mathbf{u}_{0},\quad 
	p^{0}_{h} =\mathcal{Q}_{h}p_{0},\quad 
	T^{0}_{h} =\mathcal{Q}_{h}T_{0},\quad \\
	q^{0}_{h}&=\mathrm{div} \mathbf{u}^{0}_{h},\quad 
	\varpi^{0}_{h}=c_{0}p^{0}_{h}-b_0 T^{0}_{h}+\alpha q^{0}_{h},\quad \\
	\tau^{0}_{h}&=\alpha p^{0}_{h} -(\lambda+\mu) q^{0}_{h}+\beta T^{0}_{h},\quad 
	\varsigma^{0}_{h} =a_0 T^{0}_{h} -b_0 p^{0}_{h} +\beta q^{0}_{h}.
\end{align*}
	\item [Step 2:] For $n=0,1,2,\cdots$,  do the following two steps.
	
{\em (i)} Solve for $(\mathbf{u}^{n+1}_{h},\tau^{n+1}_{h},\varpi^{n+1}_h,\varsigma^{n+1}_{h})
\in \mathbf{V}_{h}\times M_{h} \times  W_{h}\times  Z_{h}$ such that
\begin{eqnarray}
\mu\left(\nabla \left(\mathbf{u}_{h}^{n+1}\right),\nabla \left(\mathbf{v}_{h}\right)\right)-\left(\tau_{h}^{n+1},\nabla \cdot \mathbf{v}_{h}\right)=\left(\mathbf{f},\mathbf{v}_{h}\right)+\left\langle\mathbf{f}_{1},\mathbf{v}_{h}\right\rangle \quad \forall \mathbf{v}_{h} \in \boldsymbol{V}_{h},\label{3-5}\\
\gamma_{6}\left(\tau_{h}^{n+1},\varphi_{h}\right)+\left(\nabla \cdot \mathbf{u}_{h}^{n+1},\varphi_{h}\right)=\gamma_{4}\left(\varpi_{h}^{n+\theta},\varphi_{h}\right)+\gamma_{1}\left(\varsigma_{h}^{n+\theta},\varphi_{h}\right) \quad \forall \varphi_{h} \in M_{h},  \label{3-6}\\
\left(d_{t} \varpi_{h}^{n+1}, y_{h}\right)+\left(k(\tau_h^{n+1})\nabla\left(\gamma_{4} \tau_{h}^{n+1}+\gamma_{5} \varpi_{h}^{n+1}+\gamma_{2} \varsigma_{h}^{n+1}\right),\nabla y_{h}\right)\qquad\qquad\qquad \no\\
=\left(g, y_{h}\right)+\left\langle g_{1}, y_{h}\right\rangle \quad \forall y_{h} \in W_{h},  \label{3-7}\\
\left(d_{t} \varsigma_{h}^{n+1}, z_{h}\right)+\left(\bt \nabla\left(\gamma_{1} \tau_{h}^{n+1}+\gamma_{2} \varpi_{h}^{n+1}+\gamma_{3} \varsigma_{h}^{n+1}\right),\nabla z_{h}\right) \qquad\qquad\qquad\no\\
=\left(\phi, z_{h}\right)+\left\langle\phi_{1}, z_{h}\right\rangle \quad \forall z_{h} \in Z_{h},\label{3-8}
\end{eqnarray}
	where $d_{t}(\varpi_{h}^{n+1})=\dfrac{\varpi_{h}^{n+1}-\varpi_{h}^{n}}{\Delta t} $ and $\theta=0,1$.
	
{\em (ii)} Update $p^{n+1}_h,T^{n+1}_h$ and $q^{n+1}_h$ by
	\begin{alignat}{2}
	T^{n+1}_h&=\gamma_{1} \tau_h^{n+1} +\gamma_{2} \varpi_h^{n+\theta}+\gamma_{3} \varsigma_h^{n+\theta},\qquad
	p^{n+1}_h=\gamma_4 \tau_h^{n+1} + \gamma_5 \varpi_h^{n+\theta}+\gamma_2 \varsigma_h^{n+\theta},\qquad\no \\
	q^{n+1}_h&=-\gamma_6 \tau_h^{n+1} + \gamma_4 \varpi_h^{n+\theta} +\gamma_{1} \varsigma_h^{n+\theta}.
 	\end{alignat}
\end{itemize}
\begin{remark}
	
	(a) At each time step, the problem \eqref{3-5}--\eqref{3-6} is a generalized Stokes problem
with a mixed boundary condition for $(\bu_h^{n+1},\tau_h^{n+1})$. Because $\lambda$ is a limited number, $\nabla \cdot \bu$ in \eqref{3-6} is
not equal to zero, which show \eqref{3-5}--\eqref{3-6} is not a saddle point
problem,  so we can use arbitrary finite element pairs to handle it such as $P_2-P_1,P_2-P_2,P_1-P_1$,etc.

(b) When $\theta=0$, step 1  consists of two decoupled sub-problems that can be solved independently, therefore we can substitute the solution $\tau_h^{n+1}$ of \eqref{3-5}--\eqref{3-6} into $k(\tau_h^{n+1})$ of \eqref{3-7},  then the second sub-problem become easy to solve.
\end{remark}

\subsection{Stability analysis}
% It turns out that such a discrete energy law only holds if $h$ and $ \varDelta  t$ satisfy the meth constraint $\varDelta t = O(h^{2})$ when $\theta=0$, but for all $h,\varDelta t>0$ when $\theta=1$.\\
Before discussing the stability of MFEA, we first show that the numerical solution satisfies all side constraints which are full-filled by the PDE solution.
\begin{lemma}\label{lem3.1}
	Let $\bigl\{({\textbf{u}_{h}^{n}},\tau_{h}^{n},\varpi_{h}^{n},\varsigma_{h}^{n})\bigr\}_{n>=0}$ be defined by the MFEA, then there hold
\begin{alignat}{2}
	\left(\varpi_{h}^{n}, 1\right) & =C_{\varpi}\left(t_{n}\right) & & \text { for } n=0,1,2,\cdots,\\
		\left(\varsigma_{h}^{n}, 1\right) & =C_{\varsigma}\left(t_{n}\right) & & \text { for } n=0,1,2,\cdots,\\
\left(\tau_{h}^{n}, 1\right) & =C_{\tau}\left(t_{n-1+\theta}\right) & & \text { for } n=1-\theta, 1,2,\cdots,\\
\left\langle\mathbf{u}_{h}^{n} \cdot \mathbf{n}, 1\right\rangle & =C_{\mathbf{u}}\left(t_{n-1+\theta}\right) & & \text { for } n=1-\theta, 1,2,\cdots.
\end{alignat}
	\begin{proof}
		Taking $y_{h}=1,z_{h}=1$ in \eqref{3-7}--\eqref{3-8}, $\mathbf{v}_h=\mathbf{x}$ in \eqref{3-5} and $\phi_h=1$, we have
		\begin{alignat}{2}
			\mu (\nabla {u}_h^{n},I)-d(\tau_h^{n},1)  & =(\mathbf{f},\mathbf{x})+\left\langle\mathbf{f}_{1},\mathbf{x} \right\rangle\label{3.13},\\
\gamma_{6}\left(\tau_{h}^{n}, 1\right)+\left(\nabla \cdot \mathbf{u}_{h}^{n}, 1\right)&=\gamma_{4}\left(\varpi_{h}^{n-1+\theta}, 1\right)+\gamma_{1}\left(\varsigma_{h}^{n-1+\theta}, 1\right)\label{3.14},\\
(d_{t}\varpi_{h}^{n},1)&=(\phi,1)+\left\langle\phi_{1},1 \right\rangle\label{3-16},\\
(d_{t}\varsigma_{h}^{n},1)&=(\phi,1)+\left\langle\phi_{1},1 \right\rangle.\label{3-17}
		\end{alignat}
{\rm 	Multipling $\sum_{0}^{l} $ on the two side of \eqref{3-16}--\eqref{3-17}, we have}
	\begin{alignat}{2}
		(\varpi_{h}^n,1)&=(\varpi_{h}^{0},1)+t_n((\phi,1)+(\phi_{1},1))&&=C_{\varpi}(t_n)\label{3-18},\\
		(\varsigma_{h}^n,1)&=(\varsigma_{h}^{0},1)+t_n((\phi,1)+(\phi_{1},1))&&=C_{
			\varsigma}(t_n)\label{3-19}.
	\end{alignat}
{\rm 	Using \eqref{3.13}, \eqref{3.14}, we obtain}
	\begin{alignat}{2}
		(\mu\gamma_{6}+d)(\tau_h^n,1)
		&=\mu
		(\gamma_{4}(\varpi_{h}^{n-1+\theta},1)+
		\gamma_{1}(\varsigma_{h}^{n-1+\theta},1)
		)-(\mathbf{f},\mathbf{x})-\left\langle\mathbf{f}_{1},\mathbf{x} \right\rangle\label{3-20},
	\end{alignat}
{\rm so we have that }
\begin{align*}
	(\tau_h^{n},1)&=C_{\mathbf{\tau}}(t_{n-1+\theta}).
\end{align*}
{\rm Then using Gaussian divergence theorem and \eqref{3.14}, we obtain}
	\begin{align*}
		\left\langle\mathbf{u}_{h}^{n} \cdot \mathbf{n}, 1\right\rangle&=({\rm div} \mathbf{u}_h^{n},1)\\
		&= \gamma_{4}\left(\varpi_{h}^{n-1+\theta}, 1\right)+\gamma_{1}\left(\varsigma_{h}^{n-1+\theta}, 1\right)-
		\gamma_{6}\left(\tau_{h}^{n}, 1\right) \\
		&=C_{\mathbf{u}}(t_{n-1+\theta}).
	\end{align*}
{\rm The proof is complete.}
	\end{proof}
	\end{lemma}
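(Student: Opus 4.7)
The plan is to exploit the fact that constant and affine test functions annihilate all gradient-type terms in \eqref{3-5}--\eqref{3-8}, leaving only mass-like identities that can be iterated in the time index $n$.

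First I would establish the two easy identities for $\varpi_h^n$ and $\varsigma_h^n$. Choosing $y_h=1\in W_h$ in \eqref{3-7} kills the flux term $(k(\tau_h^{n+1})\nabla(\cdots),\nabla 1)=0$, and likewise $z_h=1\in Z_h$ in \eqref{3-8} kills the thermal flux term. This leaves
\begin{equation*}
(d_t\varpi_h^{n+1},1)=(g,1)+\langle g_1,1\rangle,\qquad (d_t\varsigma_h^{n+1},1)=(\phi,1)+\langle \phi_1,1\rangle.
\end{equation*}
A telescoping sum over $n=0,\dots,l-1$, together with the initial values $\varpi_h^0,\varsigma_h^0$ defined in Step~1, then produces
\begin{equation*}
(\varpi_h^l,1)=(\varpi_h^0,1)+t_l\bigl[(g,1)+\langle g_1,1\rangle\bigr]=:C_\varpi(t_l),
\end{equation*}
and analogously $(\varsigma_h^l,1)=C_\varsigma(t_l)$. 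This gives the first two claims.

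Next I would handle $(\tau_h^n,1)$. Taking $\varphi_h=1\in M_h$ in \eqref{3-6} yields
\begin{equation*}
\gamma_6(\tau_h^{n+1},1)+(\nabla\cdot\mathbf{u}_h^{n+1},1)=\gamma_4(\varpi_h^{n+\theta},1)+\gamma_1(\varsigma_h^{n+\theta},1),
\end{equation*}
whose right-hand side is known from the previous step. To eliminate the divergence term I would use $\mathbf{v}_h=\mathbf{x}$ in \eqref{3-5}; since $\mathbf{X}_h=\mathbf{V}_h\oplus\mathbf{RM}$ and the load $(\mathbf{f},\cdot)+\langle\mathbf{f}_1,\cdot\rangle$ is compatible on $\mathbf{RM}$ by the assumption in Definition \ref{weak1}, the equation \eqref{3-5} extends to all of $\mathbf{X}_h$, so $\mathbf{x}$ (which lies in $\mathbf{X}_h$ for any $P_k$, $k\geq1$) is an admissible test. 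Combining the resulting identity with the previous one isolates $(\tau_h^{n+1},1)$ as a linear combination of known data at time $t_{n+\theta}$, giving $(\tau_h^n,1)=C_\tau(t_{n-1+\theta})$.

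Finally, for the boundary flux $\langle\mathbf{u}_h^n\cdot\mathbf{n},1\rangle$ I would invoke the divergence theorem, $\langle\mathbf{u}_h^n\cdot\mathbf{n},1\rangle=(\nabla\cdot\mathbf{u}_h^n,1)$, and then substitute the formula just derived from \eqref{3-6} to express it purely in terms of $(\tau_h^n,1)$, $(\varpi_h^{n-1+\theta},1)$, and $(\varsigma_h^{n-1+\theta},1)$, all of which are known functions of $t_{n-1+\theta}$. The only subtle point I anticipate is the justification for using $\mathbf{v}_h=\mathbf{x}$ as a test function, which requires the compatibility of the load data on rigid motions and the decomposition $\mathbf{X}_h=\mathbf{V}_h\oplus\mathbf{RM}$; the rest is algebraic bookkeeping.
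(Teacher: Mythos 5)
Your proposal follows essentially the same route as the paper: test \eqref{3-7}--\eqref{3-8} with constants and telescope in $n$ to get the $\varpi_h$ and $\varsigma_h$ identities, test \eqref{3-6} with $\varphi_h=1$ and \eqref{3-5} with $\mathbf{v}_h=\mathbf{x}$ to isolate $(\tau_h^n,1)$, then apply the divergence theorem for the boundary flux. Your added remark justifying the admissibility of $\mathbf{v}_h=\mathbf{x}$ via the decomposition $\mathbf{X}_h=\mathbf{V}_h\oplus\mathbf{RM}$ and the compatibility of the load on rigid motions is a point the paper leaves implicit, and you correctly write the source term in the $\varpi_h$ identity as $(g,1)+\langle g_1,1\rangle$ where the paper's display has a typo.
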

	\begin{lemma}
		 Let $ \left\lbrace \left( \bu_h^{n},\tau_h^{n},\varpi_h^{n},\varsigma
			_h^{n}\right) \right\rbrace_{n\geq 0}  $ be defined by MFEA, then there holds the following identity:
			\begin{alignat}{2}
			J_{h,\theta}^{l}+S_{h,\theta}^{l}=J_{h,\theta}^{0}\qquad {\rm for}\quad l\geq1,\theta=0,1,\label{3.18}
		\end{alignat}
	where
	\begin{align*}
J_{h,\theta}^{l}: & =\frac{1}{2}\left[\mu\left\|\nabla\left(\mathbf{u}_{h}^{l+1}\right)\right\|_{L^{2}(\Omega)}^{2}+\gamma_{6}\left\|\tau_{h}^{l+1}\right\|_{L^{2}(\Omega)}^{2}+\left(\gamma_{5}+\gamma_{2}\right)\left\|\varpi_{h}^{l+\theta}\right\|_{L^{2}(\Omega)}^{2}\right. \\
& +\left(\gamma_{3}+\gamma_{2}\right)\left\|\varsigma_{h}^{l+\theta}\right\|_{L^{2}(\Omega)}^{2}-\frac{\gamma_{2}}{2}\left\|\varsigma_{h}^{l+1}-\varpi_{h}^{l+1}\right\|_{L^{2}(\Omega)}^{2}-2\left(\mathbf{f},\mathbf{u}_{h}^{l+1}\right) \\
& \left.-2\left\langle\mathbf{f}_{1},\mathbf{u}_{h}^{l+1}\right\rangle\right],\\
S_{h,\theta}^{l}: & =\Delta t \sum_{n=0}^{l}\left[\frac{\mu \Delta t}{2}\left\|d_{t} \nabla\left(\mathbf{u}_{h}^{n+1}\right)\right\|_{L^{2}(\Omega)}^{2}+\frac{\gamma_{6} \Delta t}{2}\left\|d_{t} \tau_{h}^{n+1}\right\|_{L^{2}(\Omega)}^{2}\right. \\
& +\frac{\left(\gamma_{5}+\gamma_{2}\right) \Delta t}{2}\left\|d_{t} \varpi_{h}^{n+\theta}\right\|_{L^{2}(\Omega)}^{2}+\frac{\left(\gamma_{3}+\gamma_{2}\right) \Delta t}{2}\left\|d_{t} \varsigma_{h}^{n+\theta}\right\|_{L^{2}(\Omega)}^{2} \\
& -\frac{\gamma_{2}}{2} \Delta t\left\|d_{t} \varsigma_{h}^{n+\theta}-d_{t} \varpi_{h}^{n+\theta}\right\|_{L^{2}(\Omega)}^{2}+\left(k(\tau_{h}^{n+1}) \nabla p_{h}^{n+1},\nabla p_{h}^{n+1}\right) \\
& +\left(\boldsymbol{\Theta} T_{h}^{n+1},\nabla T_{h}^{n+1}\right)-(1-\theta) \gamma_{4} \Delta t\left(k(\tau_{h}^{n+1}) \left(d_{t} \nabla \tau_{h}^{n+1}\right),\nabla p_{h}^{n+1}\right) \\
& -(1-\theta) \gamma_{1} \Delta t\left(\boldsymbol{\Theta}\left(d_{t} \nabla \tau_{h}^{n+1}\right),\nabla T_{h}^{n+1}\right)-\left(g, p_{h}^{n+1}\right) \\
& \left.-\left\langle g_{1}, p_{h}^{n+1}\right\rangle-\left(\phi, T_{h}^{n+1}\right)-\left\langle\phi_{1}, T_{h}^{n+1}\right\rangle\right]. 
	\end{align*}
\end{lemma}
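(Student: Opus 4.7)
The plan is to treat this as a standard discrete energy identity for the four coupled equations \eqref{3-5}--\eqref{3-8}, obtained by testing each equation with a carefully chosen function and adding. Specifically, I would take $\mathbf{v}_h=d_t\mathbf{u}_h^{n+1}$ in \eqref{3-5}, $\varphi_h=-d_t\tau_h^{n+1}$ in the time-differenced version of \eqref{3-6}, $y_h=p_h^{n+1}$ in \eqref{3-7}, and $z_h=T_h^{n+1}$ in \eqref{3-8}. The reason for these choices is that $p_h^{n+1}$ and $T_h^{n+1}$ are the natural ``flux potentials'' for the diffusion terms, producing the coercive quantities $(k(\tau_h^{n+1})\nabla p_h^{n+1},\nabla p_h^{n+1})$ and $(\bt\nabla T_h^{n+1},\nabla T_h^{n+1})$ that live in $S_{h,\theta}^l$, while $d_t\mathbf{u}_h^{n+1}$ paired with the elastic bilinear form produces the desired $\frac{\mu}{2}d_t\|\nabla\mathbf{u}_h^{n+1}\|^2$ telescoping term.

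First I would subtract \eqref{3-6} at time level $n$ from level $n+1$ and divide by $\Delta t$ to get
\[
\gamma_6(d_t\tau_h^{n+1},\varphi_h)+(\mathrm{div}\,d_t\mathbf{u}_h^{n+1},\varphi_h)=\gamma_4(d_t\varpi_h^{n+\theta},\varphi_h)+\gamma_1(d_t\varsigma_h^{n+\theta},\varphi_h).
\]
Testing \eqref{3-5} with $d_t\mathbf{u}_h^{n+1}$ and using this identity to eliminate $(\tau_h^{n+1},\mathrm{div}\,d_t\mathbf{u}_h^{n+1})$, I apply the standard identity $(a^{n+1},d_ta^{n+1})=\tfrac12 d_t\|a^{n+1}\|^2+\tfrac{\Delta t}{2}\|d_ta^{n+1}\|^2$ to produce the terms $\tfrac{\mu}{2}d_t\|\nabla\mathbf{u}_h^{n+1}\|^2$, $\tfrac{\gamma_6}{2}d_t\|\tau_h^{n+1}\|^2$, their matching dissipative $\tfrac{\Delta t}{2}\|d_t\cdot\|^2$ counterparts, and residual coupling terms $-\gamma_4(\tau_h^{n+1},d_t\varpi_h^{n+\theta})-\gamma_1(\tau_h^{n+1},d_t\varsigma_h^{n+\theta})$.

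Next, expanding $p_h^{n+1}=\gamma_4\tau_h^{n+1}+\gamma_5\varpi_h^{n+\theta}+\gamma_2\varsigma_h^{n+\theta}$ in the $(d_t\varpi_h^{n+1},p_h^{n+1})$ term from \eqref{3-7}, and similarly for $T_h^{n+1}$ in \eqref{3-8}, produces three kinds of contributions: the two coupling terms $\gamma_4(d_t\varpi_h^{n+1},\tau_h^{n+1})$ and $\gamma_1(d_t\varsigma_h^{n+1},\tau_h^{n+1})$, which cancel against the residuals from the previous step (up to the correction $-(1-\theta)\gamma_4\Delta t(k(\tau_h^{n+1})d_t\nabla\tau_h^{n+1},\nabla p_h^{n+1})$ and its $\Theta$-analogue that explicitly appear in $S_{h,\theta}^l$ precisely to absorb the $\theta=0$ time-lag mismatch); the diagonal terms $\gamma_5(d_t\varpi_h^{n+1},\varpi_h^{n+\theta})$ and $\gamma_3(d_t\varsigma_h^{n+1},\varsigma_h^{n+\theta})$ handled by the same discrete identity as above; and the symmetric cross terms $\gamma_2[(d_t\varpi_h^{n+1},\varsigma_h^{n+\theta})+(d_t\varsigma_h^{n+1},\varpi_h^{n+\theta})]$, which I rewrite using $2(a,b)=\|a\|^2+\|b\|^2-\|a-b\|^2$ to obtain the $(\gamma_3+\gamma_2)$, $(\gamma_5+\gamma_2)$ coefficients in $J_{h,\theta}^l$ together with the $-\tfrac{\gamma_2}{2}\|\varsigma_h^{\cdot}-\varpi_h^{\cdot}\|^2$ correction.

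Finally, adding the four tested equations, multiplying by $\Delta t$, and summing from $n=0$ to $l$ telescopes the $d_t\|\cdot\|^2$ terms to $J_{h,\theta}^l-J_{h,\theta}^0$, while the remaining $\|d_t\cdot\|^2$ stabilization terms, the coercive diffusion integrals, and the source contributions assemble exactly into $S_{h,\theta}^l$, yielding \eqref{3.18}. The main obstacle I anticipate is the bookkeeping of the $\theta$-shifted indices: for $\theta=1$ the pairings $(d_t\varpi_h^{n+1},\varpi_h^{n+1})$ are immediate, but for $\theta=0$ one has $(d_t\varpi_h^{n+1},\varpi_h^n)$, which flips the sign of the dissipative correction. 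Reconciling both cases into a single formula with $\|d_t\varpi_h^{n+\theta}\|^2$ and justifying the explicit $(1-\theta)$ correction terms in $S_{h,\theta}^l$ is the delicate point that must be handled by careful index shifting under the sum $\sum_{n=0}^l$ and using Lemma~\ref{lem3.1} to keep the constraint \eqref{3-6} consistent at the endpoints.
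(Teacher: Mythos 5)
Your proposal is correct and follows essentially the same route as the paper's proof: test \eqref{3-5} with $d_t\bu_h^{n+1}$, the time-differenced \eqref{3-6} with $\tau_h^{n+1}$, and \eqref{3-7}--\eqref{3-8} with $p_h^{n+1}$, $T_h^{n+1}$ (index-lowered when $\theta=0$, producing the $(1-\theta)$ correction terms), then apply the discrete product-rule and polarization identities and telescope. The only slip is your stated test function $\varphi_h=-d_t\tau_h^{n+1}$: the elimination of $(\tau_h^{n+1},\mathrm{div}\,d_t\bu_h^{n+1})$ and the terms you subsequently derive correspond to taking $\varphi_h=\tau_h^{n+1}$ in the differenced constraint, which is what the paper does.
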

\begin{proof}
1. We firstly consider  the case $\theta=1$. Setting $\bv_h=d_t\bu_{h}^{n+1},\varphi_{h}=\tau_h^{n+1}$ in \eqref{3-5}--\eqref{3-6} and $y_h=p_h^{n+1},z_h=T_h^{n+1}$ in \eqref{3-7}--\eqref{3-8}, we obtain
\begin{alignat}{2}
\mu\left(\nabla\left(\mathbf{u}_{h}^{n+1}\right),\nabla\left(d_{t} \mathbf{u}_{h}^{n+1}\right)\right)-\left(\tau_{h}^{n+1},\nabla \cdot d_{t} \mathbf{u}_{h}^{n+1}\right)=\left(\mathbf{f}, d_{t} \mathbf{u}_{h}^{n+1}\right)+\left\langle\mathbf{f}_{1}, d_{t} \mathbf{u}_{h}^{n+1}\right\rangle \label{3.29},\\
\gamma_{6}\left(d_{t} \tau_{h}^{n+1},\tau_{h}^{n+1}\right)+\left(\nabla \cdot d_{t} \mathbf{u}_{h}^{n+1},\tau_{h}^{n+1}\right)=\gamma_{4}\left(d_{t} \varpi_{h}^{n+1},\tau_{h}^{n+1}\right)+\gamma_{1}\left(d_{t} \varsigma_{h}^{n+1},\tau_{h}^{n+1}\right) \label{3-30},\\
\gamma_{5}\left(d_{t} \varpi_{h}^{n+1},\varpi_{h}^{n+1}\right)+\gamma_{4}\left(d_{t} \varpi_{h}^{n+1},\tau_{h}^{n+1}\right)+\gamma_{2}\left(d_{t} \varpi_{h}^{n+1},\varsigma_{h}^{n+1}\right) \no\\
+\left(k(\tau_{h}^{n+1}) \nabla p_{h}^{n+1},\nabla p_{h}^{n+1}\right)=\left(g, p_{h}^{n+1}\right)+\left\langle g_{1}, p_{h}^{n+1}\right\rangle \label{3-31},\\
\gamma_{3}\left(d_{t} \varsigma_{h}^{n+1},\varsigma_{h}^{n+1}\right)+\gamma_{1}\left(d_{t} \varsigma_{h}^{n+1},\tau_{h}^{n+1}\right)+\gamma_{2}\left(d_{t} \varsigma_{h}^{n+1},\varpi_{h}^{n+1}\right) \no \\
+\left(\boldsymbol{\Theta} \nabla T_{h}^{n+1},\nabla T_{h}^{n+1}\right)=\left(\phi, T_{h}^{n+1}\right)+\left\langle\phi_{1}, T_{h}^{n+1}\right\rangle\label{3-32}.
\end{alignat}
{\rm The first term on the left-hand side of \eqref{3.29} can be written as }
\begin{alignat}{2}
\mu\left(\nabla\left(\mathbf{u}_{h}^{n+1}\right),\nabla\left(d_{t} \mathbf{u}_{h}^{n+1}\right)\right)&=\frac{\mu}{\Delta t}\left(\nabla\left(\mathbf{u}_{h}^{n+1}\right),\nabla\left(\mathbf{u}_{h}^{n+1}\right)-\nabla\left(\mathbf{u}_{h}^{n}\right)\right) \no\\
&=\frac{\mu}{2 \Delta t}\left[2\left(\nabla\left(\mathbf{u}_{h}^{n+1}\right),\nabla\left(\mathbf{u}_{h}^{n+1}\right)\right)-2\left(\nabla\left(\mathbf{u}_{h}^{n+1}\right),\nabla\left(\mathbf{u}_{h}^{n}\right)\right)\right. \no\\
&\left.+\left(\nabla\left(\mathbf{u}_{h}^{n}\right),\nabla\left(\mathbf{u}_{h}^{n}\right)\right)-\left(\nabla\left(\mathbf{u}_{h}^{n}\right),\nabla\left(\mathbf{u}_{h}^{n}\right)\right)\right] \no\\
&=\frac{\mu}{2 \Delta t}\left[\left(\nabla\left(\mathbf{u}_{h}^{n+1}\right)-\nabla\left(\mathbf{u}_{h}^{n}\right),\nabla\left(\mathbf{u}_{h}^{n+1}\right)-\nabla\left(\mathbf{u}_{h}^{n}\right)\right)\right. \no\\
&\left.+\left(\nabla\left(\mathbf{u}_{h}^{n+1}\right),\nabla\left(\mathbf{u}_{h}^{n+1}\right)\right)-\left(\nabla\left(\mathbf{u}_{h}^{n}\right),\nabla\left(\mathbf{u}_{h}^{n}\right)\right)\right] \no\\
&=\frac{\mu \Delta t}{2}\left\|d_{t} \nabla\left(\mathbf{u}_{h}^{n+1}\right)\right\|_{L^{2}(\Omega)}^{2}+\frac{\mu}{2} d_{t}\left\|\nabla\left(\mathbf{u}_{h}^{n+1}\right)\right\|_{L^{2}(\Omega)}^{2}.\label{3-33}
\end{alignat}
{\rm The first term on the left-hand side of \eqref{3-30}--\eqref{3-32} can be written as }
\begin{alignat}{2}
\left( d_{t}\varpi_{h}^{n+1},\varpi_h^{n+1}\right)
&= \frac{1}{2}d_t||\varpi_{h}^{n+1}||_{L^{2}(\Omega)}^{2}+\frac{\Delta t}{2}||d_t\varpi_{h}^{n+1}||_{L^{2}(\Omega)}^{2},\label{3-27}\\
\left( d_{t}\tau_{h}^{n+1},\tau_h^{n+1}\right)
&= \frac{1}{2}d_t||\tau_{h}^{n+1}||_{L^{2}(\Omega)}^{2}+\frac{\Delta t}{2}||d_t\tau_{h}^{n+1}||_{L^{2}(\Omega)}^{2},\label{3-28}\\
\left( d_{t}\varsigma_{h}^{n+1},\varsigma_h^{n+1}\right)
&= \frac{1}{2}d_t||\varsigma_{h}^{n+1}||_{L^{2}(\Omega)}^{2}+\frac{\Delta t}{2}||d_t\varsigma_{h}^{n+1}||_{L^{2}(\Omega)}^{2}.\label{3-29}
\end{alignat}
Using \eqref{3-30}, we obtain
\begin{alignat}{2}
&\gamma_{2}\left(d_{t} \varpi_{h}^{n+1},\varsigma_{h}^{n+1}\right) \no\\
&=\frac{\gamma_{2}}{2 \Delta t}\left[\left(\varpi_{h}^{n+1}-\varpi_{h}^{n},\varsigma_{h}^{n+1}\right)-\left(\varpi_{h}^{n},\varsigma_{h}^{n+1}-\varsigma_{h}^{n}\right)+\left(\varpi_{h}^{n+1},\varsigma_{h}^{n+1}\right)-\left(\varpi_{h}^{n},\varsigma_{h}^{n}\right)\right],\\
&\gamma_{2}\left(d_{t} \varsigma_{h}^{n+1},\varpi_{h}^{n+1}\right) \no \\
&=\frac{\gamma_{2}}{2 \Delta t}\left[\left(\varsigma_{h}^{n+1}-\varsigma_{h}^{n},\varpi_{h}^{n+1}\right)-\left(\varsigma_{h}^{n},\varpi_{h}^{n+1}-\varpi_{h}^{n}\right)+\left(\varsigma_{h}^{n+1},\varpi_{h}^{n+1}\right)-\left(\varsigma_{h}^{n},\varpi_{h}^{n}\right)\right].\label{3-38}
\end{alignat}
Then through adding \eqref{3.29}--\eqref{3-32} and substituting \eqref{3-33}--\eqref{3-38},   we have that 
\begin{alignat}{2}
\frac{\mu}{2} d_{t}\left\|\nabla\left(\mathbf{u}_{h}^{n+1}\right)\right\|_{L^{2}(\Omega)}^{2}+\frac{\mu \Delta t}{2}\left\|d_{t} 
\nabla\left(\mathbf{u}_{h}^{n+1}\right)\right\|_{L^{2}(\Omega)}^{2}+\frac{\gamma_{6}}{2} d_{t}\left\|\xi_{h}^{n+1}\right\|_{L^{2}(\Omega)}^{2} \no\\
+\frac{\gamma_{6}}{2} \Delta t\left\|d_{t} \xi_{h}^{n+1}\right\|_{L^{2}(\Omega)}^{2}+\frac{\gamma_{5}+\gamma_{2}}{2} d_{t}\left\| \varpi_{h}^{n+1}\right\|_{L^{2}(\Omega)}^{2}+\frac{\gamma_{5}+\gamma_{2}}{2} \Delta t\left\|d_{t} \varpi_{h}^{n+1}\right\|_{L^{2}(\Omega)}^{2} \no\\
+\frac{\gamma_{3}+\gamma_{2}}{2} d_{t}\left\|\varsigma_{h}^{n+1}\right\|_{L^{2}(\Omega)}^{2}+\frac{\gamma_{3}+\gamma_{2}}{2} \Delta t\left\|d_{t} \varsigma_{h}^{n+1}\right\|_{L^{2}(\Omega)}^{2}-\frac{\gamma_{2}}{2} \Delta t\left\|d_{t} \varsigma_{h}^{n+1}-d_{t} \varpi_{h}^{n+1}\right\|_{L^{2}(\Omega)}^{2} \no\\
-\frac{\gamma_{2}}{2} d_{t}\left\|\varsigma_{h}^{n+1}- \varpi_{h}^{n+1}\right\|_{L^{2}(\Omega)}^{2}+\left(k(\tau_{h}^{n+1}) \nabla p_{h}^{n+1},\nabla p_{h}^{n+1}\right)+\left(\boldsymbol{\Theta} \nabla T_{h}^{n+1},\nabla T_{h}^{n+1}\right) \no\\
=\left(\mathbf{f}, d_{t} \mathbf{u}_{h}^{n+1}\right)+\left\langle\mathbf{f}_{1}, d_{t} \mathbf{u}_{h}^{n+1}\right\rangle+\left(g, p_{h}^{n+1}\right)+\left\langle g_{1}, p_{h}^{n+1}\right\rangle+\left(\phi, T_{h}^{n+1}\right)+\left\langle\phi_{1}, T_{h}^{n+1}\right\rangle.\label{3-39}
\end{alignat}
{\rm  Applying the summation operator $ \Delta t\sum_{n=1} ^{l}$ to the both sides of \eqref{3-39}, we  yield the desired equality \reff{3.18}.}

{\rm 2. As for the case of $\theta=0$, from \eqref{3.14}, we can define $\varpi_h^{-1},\varsigma_h^{-1}$ by}
	\begin{alignat}{2}
%		\gamma_{1}(\varpi_h^{-1},\varphi_{h})=\gamma_{3}(\tau_{h}^0,\varphi_{h})+(div\bu_{h}^0,\varphi_{h}),\\
(p_h^0,y_h)=\gamma_{4}(\tau_h^0,y_h)+\gamma_{5}(\varpi_h^{-1},y_h)+\gamma_{2}(\varsigma_h^{-1},y_h),\\
(T_h^0,z_h)=\gamma_{1}(\tau_h^0,z_h)+\gamma_{2}(\varpi_h^{-1},z_h)+\gamma_{3}(\varsigma_h^{-1},z_h).
	\end{alignat}
{\rm Through setting $\bv_h=d_t\bu_{h}^{n+1},\varphi_{h}=\tau_h^{n+1}$ in \eqref{3-5}--\eqref{3-6} and $y_h=p_h^{n+1},z_h=T_h^{n+1}$ in \eqref{3-7}--\eqref{3-8}  after lowing the super-index from $n+1$ to $n$ on the both sides of \eqref{3-7}--\eqref{3-8}, we have that}
\begin{alignat}{2}
	\mu\left(\nabla\left(\mathbf{u}_{h}^{n+1}\right),\nabla\left(d_{t} \mathbf{u}_{h}^{n+1}\right)\right)-\left(\tau_{h}^{n+1},\nabla \cdot d_{t} \mathbf{u}_{h}^{n+1}\right)=\left(\mathbf{f}, d_{t} \mathbf{u}_{h}^{n+1}\right)+\left\langle\mathbf{f}_{1}, d_{t} \mathbf{u}_{h}^{n+1}\right\rangle \label{1-29},\\
	\gamma_{6}\left(d_{t} \tau_{h}^{n+1},\tau_{h}^{n+1}\right)+\left(\nabla \cdot d_{t} \mathbf{u}_{h}^{n+1},\tau_{h}^{n+1}\right)=\gamma_{4}\left(d_{t} \varpi_{h}^{n+1},\tau_{h}^{n+1}\right)+\gamma_{1}\left(d_{t} \varsigma_{h}^{n+1},\tau_{h}^{n+1}\right) \label{1-30},\\
	\gamma_{5}\left(d_{t} \varpi_{h}^{n},\varpi_{h}^{n}\right)+\gamma_{4}\left(d_{t} \varpi_{h}^{n},\tau_{h}^{n+1}\right)+\gamma_{2}\left(d_{t} \varpi_{h}^{n},\varsigma_{h}^{n}\right) \no\\
	+\left(k(\tau_{h}^{n+1}) \nabla p_{h}^{n},\nabla p_{h}^{n+1}\right)=\left(g, p_{h}^{n+1}\right)+\left\langle g_{1}, p_{h}^{n+1}\right\rangle \label{1-31},\\
	\gamma_{3}\left(d_{t} \varsigma_{h}^{n},\varsigma_{h}^{n}\right)+\gamma_{1}\left(d_{t} \varsigma_{h}^{n},\tau_{h}^{n+1}\right)+\gamma_{2}\left(d_{t} \varsigma_{h}^{n},\varpi_{h}^{n}\right) \no \\
	+\left(\boldsymbol{\Theta} \nabla T_{h}^{n},\nabla T_{h}^{n+1}\right)=\left(\phi, T_{h}^{n+1}\right)+\left\langle\phi_{1}, T_{h}^{n+1}\right\rangle\label{1-32},
\end{alignat}
Moreover, it is easy to check that
\begin{alignat}{2}
	&\frac{1}{\mu_{f}}\left(k(\tau_h^{n}) \nabla\left(\gamma_{1} \tau_{h}^{n}+\gamma_{2}\varpi_{h}^{n} \right),\nabla p_{h}^{n+1} \right)\label{1-27}\\
	&=\frac{1}{\mu_{f}}\left( k(\tau_h^{n})\nabla p_{h}^{n+1},\nabla p_{h}^{n+1}  \right)
	-\frac{\gamma_1\Delta t}{\mu_{f}}\left( k(\tau_h^{n}) d_t\nabla\tau_{h}^{n+1},\nabla p_{h}^{n+1} \right),\no\\
	&\left(\boldsymbol{\Theta} \nabla T_{h}^{n},\nabla T_{h}^{n+1}\right)
	=\left(\boldsymbol{\Theta} \nabla T_{h}^{n+1},\nabla T_{h}^{n+1}\right)-
	\gamma_{1} \Delta t \left(\boldsymbol{\Theta} d_t \nabla \tau_{h}^{n+1},\nabla T_{h}^{n+1}\right),\\
	&\gamma_3\left( d_t\tau_h^{n+1},\tau_h^{n+1}\right)= \frac{\gamma_3}{2}d_t||\tau_{h}^{n+1}||_{L^{2}(\Omega)}^{2}+\frac{\gamma_3\Delta t}{2}||d_t\tau_{h}^{n+1}||_{L^{2}(\Omega)}^{2}.\label{1-28}
\end{alignat}
 Adding \eqref{1-29}¨C\eqref{1-32}, using \eqref{1-27}¨C\eqref{1-28} and applying the summation operator $ \Delta t\sum_{n=1} ^{l}$ to the both sides of the resulting equation, we can get the desired equality \reff{3.18}.
 \end{proof}
\begin{corollary}
	 Let $ \left\lbrace \left( \bu_h^{n},\tau_h^{n},\varpi_h^{n}\right) \right\rbrace_{n\geq 0}  $ be defined by MFEA with $ \theta=0 $, then there holds the following inequality:\\
	\begin{alignat}{2}
		J_{h,0}^{l}+\hat{S}_{h,0}^{l}\leq J_{h,0}^{0}\qquad {\rm for}\quad l\geq1,\label{3.31}
	\end{alignat}
	{\rm provided that $ \Delta t=O(h^{2}) $, where}
	\begin{align*}
	\hat{S}_{h,0}^{l}=&\Delta t\sum_{n=1}^{l}\left[\left( \frac{\mu}{4}\Delta t\right) \|d_{t}\nabla\left(\bu_{h}^{n+1} \right)  \| _{L^{2}(\Omega)}^{2} \right.\\                         
&\left.+\frac{k_m}{2\mu_{f}}
\|\nabla p_{h}^{n+1}\|_{L^{2}(\Omega)}^{2}+
\frac{\theta_m}{2\mu_f}\|\nabla T_h^{n+1}\|_{L^{2}(\Omega)}^{2}
+\frac{(\gamma_{2}+\gamma_{5})\Delta t}{2}\|d_{t}\varpi_{h}^{n}  \| _{L^{2}(\Omega)}^{2} \right.\\ 
&+\frac{(\gamma_{2}+\gamma_{3})\Delta t}{2}\|d_{t}\varsigma_{h}^{n}  \| _{L^{2}(\Omega)}^{2}-\frac{k_2}{2} \Norm{d_t \varsigma_h^n-d_t \varpi_h^n}{L^{2}(\Omega)}\\
&\left.+\frac{\gamma_{6}\Delta t}{2}\|d_{t}\tau_{h}^{n+1}  \| _{L^{2}(\Omega)}^{2}-\left( \phi,p_{h}^{n+1}\right)
-\left\langle \phi_1,p_{h}^{n+1} \right\rangle -\left( g,T_{h}^{n+1}\right)
-\left\langle g_1,T_{h}^{n+1} \right\rangle
\right].
	\end{align*}
\end{corollary}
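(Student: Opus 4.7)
The plan is to start from the identity of the preceding lemma specialized to $\theta=0$, namely $J_{h,0}^{l}+S_{h,0}^{l}=J_{h,0}^{0}$, and then estimate the two problematic ``cross'' terms in $S_{h,0}^{l}$ that appear only when $\theta=0$, i.e.
\begin{align*}
-\gamma_{4}\Delta t\bigl(k(\tau_{h}^{n+1})\,d_{t}\nabla\tau_{h}^{n+1},\nabla p_{h}^{n+1}\bigr),
\qquad
-\gamma_{1}\Delta t\bigl(\boldsymbol{\Theta}\,d_{t}\nabla\tau_{h}^{n+1},\nabla T_{h}^{n+1}\bigr).
\end{align*}
These terms are not sign-definite and involve the time difference of a gradient; they must be absorbed into the positive-definite pieces of $S_{h,0}^{l}$ at the cost of the CFL-type restriction $\Delta t=O(h^{2})$.

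First I would apply Cauchy--Schwarz together with the upper bounds $k(\tau_{h}^{n+1})\le k_{M}$ from assumption A1 and $|\boldsymbol{\Theta}|\le \theta_{M}$ from A2 to obtain pointwise bounds of the form
\begin{align*}
\Delta t\bigl|\bigl(k(\tau_{h}^{n+1})d_{t}\nabla\tau_{h}^{n+1},\nabla p_{h}^{n+1}\bigr)\bigr|
\le k_{M}\Delta t\,\|d_{t}\nabla\tau_{h}^{n+1}\|_{L^{2}(\Omega)}\|\nabla p_{h}^{n+1}\|_{L^{2}(\Omega)},
\end{align*}
and analogously for the temperature cross term. The key move is then to convert $\|d_{t}\nabla\tau_{h}^{n+1}\|_{L^{2}(\Omega)}$ into $\|d_{t}\tau_{h}^{n+1}\|_{L^{2}(\Omega)}$ using the inverse inequality \eqref{3.4}, which is available because $d_{t}\tau_{h}^{n+1}\in M_{h}$ is a finite element function. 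This produces a factor $h^{-1}$, so a Young's inequality splits the result into a piece controlled by $\|\nabla p_{h}^{n+1}\|_{L^{2}(\Omega)}^{2}$ (respectively $\|\nabla T_{h}^{n+1}\|_{L^{2}(\Omega)}^{2}$) and a piece proportional to $(\Delta t)^{2}h^{-2}\|d_{t}\tau_{h}^{n+1}\|_{L^{2}(\Omega)}^{2}$.

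Next, I would choose the Young's-inequality weights so that the first piece consumes only half of the diffusive terms $(k(\tau_{h}^{n+1})\nabla p_{h}^{n+1},\nabla p_{h}^{n+1})\ge k_{m}\|\nabla p_{h}^{n+1}\|^{2}$ and $(\boldsymbol{\Theta}\nabla T_{h}^{n+1},\nabla T_{h}^{n+1})\ge\theta_{m}\|\nabla T_{h}^{n+1}\|^{2}$ already present in $S_{h,0}^{l}$; what survives are the halved diffusive terms appearing in $\hat{S}_{h,0}^{l}$. Under the step-size condition $\Delta t=O(h^{2})$ we have $(\Delta t)^{2}h^{-2}=O(\Delta t)$, so the second piece is bounded by $C\Delta t\|d_{t}\tau_{h}^{n+1}\|_{L^{2}(\Omega)}^{2}$ and can be absorbed into the term $\tfrac{\gamma_{6}\Delta t}{2}\|d_{t}\tau_{h}^{n+1}\|_{L^{2}(\Omega)}^{2}$ in $S_{h,0}^{l}$, possibly after similarly splitting $\tfrac{\mu\Delta t}{2}\|d_{t}\nabla \mathbf{u}_{h}^{n+1}\|^{2}$ in half (this accounts for the coefficient $\mu/4$ appearing in $\hat{S}_{h,0}^{l}$). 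Summing in $n$ and collecting surviving terms reproduces exactly $\hat{S}_{h,0}^{l}$ on the left-hand side, yielding \eqref{3.31}.

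The main obstacle is the simultaneous bookkeeping of three competing requirements: (i) keeping enough of the diffusive terms to recover the $k_{m}/2$ and $\theta_{m}/2$ coefficients in $\hat{S}_{h,0}^{l}$; (ii) keeping the coefficient in front of $\|d_{t}\tau_{h}^{n+1}\|^{2}$ positive after absorption, which forces the explicit CFL constraint $\Delta t\le c\,h^{2}$ with a constant $c$ depending on $k_{M}$, $\theta_{M}$, $c_{1}$, $\gamma_{1}$, $\gamma_{4}$, $\gamma_{6}$; and (iii) not touching the cross-coupling term $-\tfrac{\gamma_{2}}{2}\Delta t\|d_{t}\varsigma_{h}^{n}-d_{t}\varpi_{h}^{n}\|^{2}$, which already has an unfavourable sign and must be left as part of $\hat{S}_{h,0}^{l}$ (reflected in the expression given in the corollary). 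Once those constants are fixed consistently, the remainder of the argument is a routine summation.
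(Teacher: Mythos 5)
Your proposal is essentially sound and follows the same overall strategy as the paper: start from the identity $J_{h,0}^{l}+S_{h,0}^{l}=J_{h,0}^{0}$ of the preceding lemma, isolate the sign-indefinite cross terms $\gamma_{4}\Delta t\,(k(\cdot)\,d_{t}\nabla\tau_{h}^{n+1},\nabla p_{h}^{n+1})$ and $\gamma_{1}\Delta t\,(\boldsymbol{\Theta}\,d_{t}\nabla\tau_{h}^{n+1},\nabla T_{h}^{n+1})$, and absorb them via the inverse inequality \eqref{3.4}, Young's inequality and the restriction $\Delta t=O(h^{2})$. The one genuine difference is the absorption target. After the inverse inequality produces the factor $c_{1}h^{-1}\|\tau_{h}^{n+1}-\tau_{h}^{n}\|_{L^{2}(\Omega)}$, you absorb the resulting $(\Delta t)^{2}h^{-2}\|d_{t}\tau_{h}^{n+1}\|_{L^{2}(\Omega)}^{2}$ directly into the term $\tfrac{\gamma_{6}\Delta t}{2}\|d_{t}\tau_{h}^{n+1}\|_{L^{2}(\Omega)}^{2}$; the paper instead inserts one further step, bounding $\|\tau_{h}^{n+1}-\tau_{h}^{n}\|_{L^{2}(\Omega)}$ through the discrete inf-sup condition \eqref{3.3} combined with the discrete momentum equation \eqref{3-5}, which yields $\|\tau_{h}^{n+1}-\tau_{h}^{n}\|_{L^{2}(\Omega)}\le \frac{\mu\Delta t}{\beta_{1}}\|d_{t}\nabla(\bu_{h}^{n+1})\|_{L^{2}(\Omega)}$, and then absorbs into $\tfrac{\mu\Delta t}{2}\|d_{t}\nabla(\bu_{h}^{n+1})\|_{L^{2}(\Omega)}^{2}$. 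That extra step is precisely what produces the coefficient $\tfrac{\mu}{4}\Delta t$ in the stated $\hat{S}_{h,0}^{l}$ while leaving $\tfrac{\gamma_{6}\Delta t}{2}\|d_{t}\tau_{h}^{n+1}\|_{L^{2}(\Omega)}^{2}$ untouched; your route instead reduces the $\gamma_{6}$ coefficient and keeps the $\mu$ coefficient at $\mu/2$, so it proves the inequality with a slightly different (but equally serviceable) $\hat{S}_{h,0}^{l}$, and it makes the admissible CFL constant depend on $\gamma_{6}$ and $k_{m},\theta_{m}$ rather than on $\beta_{1}$ and $\mu$. If you want to land on the corollary exactly as stated, add the inf-sup step; otherwise your argument is a legitimate, marginally simpler, alternative.
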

\begin{proof}
%{\rm       Proof:\\}
 When $ \theta=0 $,  the last term in the expression of $ S_{h,\theta}^{l} $ does not have a fixed sign,  hence it needs to be controlled to ensure the positive of $ S_{h,0}^{l} $.  Using the Cauchy-Schwarz inequality, we obtain
	\begin{align}
		&\frac{\gamma_4\Delta t}{\mu_f}\left(k(\tau_h^{n})d_t\nabla\tau_h^{n+1},\nabla p_h^{n+1} \right)\no\\
		&=\frac{\gamma_4k(\tau_h^{n})}{\mu_f}\left(\nabla\tau_h^{n+1}-\nabla\tau_h^{n},\nabla p_h^{n+1}\right) \no\\
		&\leq\frac{\gamma_4k(\tau_h^{n})}{\mu_f}\|\nabla\tau_h^{n+1}-\nabla\tau_h^{n}\|_{L^{2}(\Omega)}\|\nabla p_h^{n+1}\|_{L^{2}(\Omega)}\no\\
%		&\leq\frac{\gamma_4^{2}k(\tau_h^{n})}{2\mu_f}\|\nabla\tau_h^{n+1}-\nabla\tau_h^{n}\|_{L^{2}(\Omega)}^{2}+\frac{k(\tau_h^{n})}{2\mu_f}\|\nabla p_h^{n+1}\|_{L^{2}(\Omega)}^{2}\no\\
		&\leq\frac{c_1^{2}\gamma_4^{2}k(\tau_h^{n})}{2\mu_fh^{2}}\|\tau_h^{n+1}-\tau_h^{n}\|_{L^{2}(\Omega)}^{2}+\frac{k(\tau_h^{n})}{2\mu_f}\|\nabla p_h^{n+1}\|_{L^{2}(\Omega)}^{2},\label{3-43}
	\end{align}
where
\begin{align}
&\|\tau_h^{n+1}-\tau_h^{n}\|_{L^{2}(\Omega)}\no\\
&\leq\frac{1}{\beta_1}\sup_{\bv_h\in\bV_h}\frac{\left( \div\bv_h,\tau_h^{n+1}-\tau_h^{n}\right) }{\|\bv_h\|_{H^1(\Omega)}}\no\\
%&\leq\frac{1}{\beta_1}\sup_{\bv_h\in\bV_h}\frac{\left( \div\bv_h,\tau_h^{n+1}-\tau_h^{n}\right) }{\|\nabla\bv_h\|_{L^2(\Omega)}}\no\\
&=\frac{1}{\beta_1}\sup_{\bv_h\in\bV_h}\frac{\mu\left( \nabla(\bu_h^{n+1}),\nabla(\bv_h)\right) -\mu\left( \nabla(\bu_h^{n}),\nabla(\bv_h)\right) }{\|\nabla\bv_h\|_{L^2(\Omega)}}\no\\
%&=\frac{\mu}{\beta_1}\sup_{\bv_h\in\bV_h}\frac{\left( \nabla(\bu_h^{n+1})-\nabla(\bu_h^{n}),\nabla(\bv_h)\right) }{\|\nabla\bv_h\|_{L^2(\Omega)}}\no\\
%&=\frac{\mu\Delta t}{\beta_1}\sup_{\bv_h\in\bV_h}\frac{\left( d_t\nabla(\bu_h^{n+1}),\nabla(\bv_h)\right) }{\|\nabla\bv_h\|_{L^2(\Omega)}}\no\\
&\leq\frac{\mu\Delta t}{\beta_1}\|d_t\nabla(\bu_h^{n+1})\|_{L^2(\Omega)}.\label{3-44}
\end{align}
Substituting \eqref{3-44} into \eqref{3-43}, we obtain
\begin{alignat}{2}
	&\frac{\gamma_4\Delta t}{\mu_f}\left(k(\tau_h^{n})d_t\nabla\tau_h^{n+1},\nabla p_h^{n+1} \right)\label{3.32}\\
	&\leq\frac{c_1^{2}\gamma_4^{2}\mu^{2}k(\tau_h^{n})}{2\mu_fh^{2}\beta_1^{2}}(\Delta t)^{2}\|d_t\nabla(\bu_h^{n+1})\|_{L^{2}(\Omega)}^{2}+\frac{k(\tau_h^{n})}{2\mu_f}\|\nabla p_h^{n+1}\|_{L^{2}(\Omega)}^{2}\no.
\end{alignat}
Using the same method, we can get
\begin{alignat}{2}
	&\frac{\gamma_1\Delta t}{\mu_f}\left(\boldsymbol{\Theta} d_t\nabla\tau_h^{n+1},\nabla T_h^{n+1} \right)\\
&\leq\frac{c_1^{2}\gamma_1^{2}\mu^{2}\bt }{2\mu_fh^{2}\beta_1^{2}}(\Delta t)^{2}\|d_t\nabla(\bu_h^{n+1})\|_{L^{2}(\Omega)}^{2}+\frac{\bt}{2\mu_f}\|\nabla T_h^{n+1}\|_{L^{2}(\Omega)}^{2}\no.
\end{alignat}
Substituting \eqref{3.32} into $ S_{h,0}^{l} $, we obtain
\begin{align*}
	S_{h,0}^{l}\geq&\Delta t\sum_{n=1}^{l}\left[\left( \frac{\mu}{2}\Delta t-\frac{c_1^{2}\gamma_1^{2}\mu^{2}k(\tau_h^{n})}{2\mu_fh^{2}\beta_1^{2}}(\Delta t)^{2}-\frac{c_1^{2}\gamma_1^{2}\mu^{2}\bt }{2\mu_fh^{2}\beta_1^{2}}(\Delta t)^{2}\right) \|d_{t}\nabla\left(\bu_{h}^{n+1} \right)  \| _{L^{2}(\Omega)}^{2} \right.\\                         
	&\left.+\frac{k(\tau_h^{n})}{2\mu_{f}}
	\|\nabla p_{h}^{n+1}\|_{L^{2}(\Omega)}^{2}+
	\frac{\bt}{2\mu_f}\|\nabla T_h^{n+1}\|_{L^{2}(\Omega)}^{2}
	+\frac{(\gamma_{2}+\gamma_{5})\Delta t}{2}\|d_{t}\varpi_{h}^{n}  \| _{L^{2}(\Omega)}^{2} \right.\\ 
	&+\frac{(\gamma_{2}+\gamma_{3})\Delta t}{2}\|d_{t}\varsigma_{h}^{n}  \| _{L^{2}(\Omega)}^{2}-\frac{k_2}{2} \Norm{d_t \varsigma_h^n-d_t \varpi_h^n}{L^{2}(\Omega)}\\
	&\left.+\frac{\gamma_{6}\Delta t}{2}\|d_{t}\tau_{h}^{n+1}  \| _{L^{2}(\Omega)}^{2}-\left( \phi,p_{h}^{n+1}\right)
	-\left\langle \phi_1,p_{h}^{n+1} \right\rangle -\left( g,T_{h}^{n+1}\right)
	-\left\langle g_1,T_{h}^{n+1} \right\rangle
	\right].
\end{align*}
Then through letting
\begin{align*}
	\frac{\mu}{2}\Delta t-\frac{c_1^{2}\gamma_1^{2}\mu^{2}k(\tau_h^{n})}{2\mu_fh^{2}\beta_1^{2}}(\Delta t)^{2}-
	\frac{c_1^{2}\gamma_1^{2}\mu^{2}\bt }{2\mu_fh^{2}\beta_1^{2}}(\Delta t)^{2}\geq \frac{\mu}{4}\Delta t,
\end{align*}
we obtain
\begin{align*}
	\Delta t\leq \frac{C\mu_f\beta_1^{2}h^{2}}{2c_1^{2}(\gamma_1^{2}k_M+\gamma_{4}^2 \theta_M)\mu},
\end{align*}
so we  provide that $ \Delta t\leq \frac{C\mu_f\beta_1^{2}h^{2}}{2c_1^{2}(\gamma_1^{2}k_M+\gamma_{4}^2 \theta_M)\mu} $ when $ \theta=0 $. the proof is complete.
\end{proof}

\subsection{Convergence analysis}
To derive the optimal order error estimates of the fully discrete multiphysics finite element method, for any $\varphi \in L^{2}(\Omega)$, we firstly define $L^{2}(\Omega)$-projection operators $\mathcal{Q}_{h}: L^{2}(\Omega)\rightarrow X^{k}_{h}$  by
\begin{eqnarray}
	(\mathcal{Q}_{h}\varphi,\psi_{h})=(\varphi,\psi_{h})~~~~~\psi_{h}\in X^{k}_{h},\label{eq210607-1}
\end{eqnarray}
where $X^{k}_{h}:=\{\psi_{h}\in C^{0};~\psi_{h}|_{E}\in P_{k}(E)~~\forall E\in\mathcal{T}_{h}\}$, $k$ is the degree of piece-wise polynomial on $E$.

Next, for any $\varphi\in H^{1}(\Omega)$, we define its elliptic projection $\mathcal{S}_{h}: H^{1}(\Omega)\rightarrow X^{k}_{h}$ by
\begin{align}
	(k\nabla \mathcal{S}_{h}\varphi,&\nabla\varphi_{h})=(k\nabla\varphi,\nabla\varphi_{h})~~~~~\forall \varphi_{h}\in X^{k}_{h},\label{3-48}\\
	&(\mathcal{S}_{h}\varphi,1)=(\varphi,1).\label{3-49}
\end{align}

Finally, for any $\textbf{v}\in\textbf{H}^{1}(\Omega)$, we define its elliptic projection $\mathcal{R}_{h}:\textbf{H}^{1}(\Omega)\rightarrow\textbf{V}^{k}_{h}$ by
\begin{eqnarray}
	(\nabla\mathcal{R}_{h}\textbf{v},\nabla\textbf{w}_{h})=(\nabla\textbf{v},\nabla\textbf{w}_{h})~~~~~
	\forall\textbf{w}_{h}\in \textbf{V}^{k}_{h},
\end{eqnarray}
where $\textbf{V}^k_{h}:=\{\textbf{v}_{h} \in \textbf{C}^{0};~\textbf{v}_{h}|_{E}\in \textbf{P}_{k}(E), (\textbf{v}_{h},\textbf{r})=0 ~\forall \textbf{r} \in \textbf{RM}\}$.\\
From \cite{Quarteroni1997}, we know that  $\mathcal{Q}_{h},\mathcal{S}_{h}$ and $\mathcal{R}_{h}$ satisfy
\begin{align}
	\|\mathcal{Q}_{h}\varphi-\varphi\|_{L^{2}(\Omega)}+&h\|\nabla(\mathcal{Q}_{h}\varphi-\varphi)\|_{L^{2}(\Omega)}\\\nonumber
	&\leq Ch^{s+1}\|\varphi\|_{H^{s+1}(\Omega)}~~\forall \varphi\in H^{s+1}(\Omega),~~  0\leq s\leq k,\label{eq-3-47}\\	\|\mathcal{S}_{h}\varphi-\varphi\|_{L^{2}(\Omega)}+&h\|\nabla(\mathcal{S}_{h}\varphi-\varphi)\|_{L^{2}(\Omega)}\\\nonumber	&\leq Ch^{s+1}\|\varphi\|_{H^{s+1}(\Omega)}~~\forall \varphi\in H^{s+1}(\Omega),~~  0\leq s\leq k,\label{eq-3-47-2}\\
	\|\mathcal{R}_{h}\textbf{v}-\textbf{v}\|_{L^{2}(\Omega)}+&h\|\nabla(\mathcal{R}_{h}\textbf{v}-\textbf{v})\|_{L^{2}(\Omega)}\\\nonumber
	&\leq Ch^{s+1}\|\textbf{v}\|_{H^{s+1}(\Omega)}~~\forall \textbf{v}\in \textbf{H}^{s+1}(\Omega),~~  0\leq s\leq k.\label{eq-3-47-3}
\end{align}
To derive error estimates, we introduce the following notations:
\begin{alignat}{2}
	E^{n}_{\textbf{u}}&:=\textbf{u}(t_{n})-\textbf{u}^{n}_{h},~~~
	E^{n}_{\tau} &:=\tau(t_{n})-\tau^{n}_{h},~~~~
	&E^{n}_{\varpi}:=\varpi(t_{n})-\varpi^{n}_{h},\no\\
	E^{n}_{p}&:=p(t_{n})-p^{n}_{h},~~~~
	E^{n}_{\varsigma}&:=\varsigma(t_{n})-\varsigma^{n}_{h},~~~~
	&E^{n}_{q}:=q(t_{n})-q^{n}_{h}.\no
\end{alignat}
It is easy to check that
\begin{eqnarray}
	&&E^{n}_{p}=\gamma_{4}E^{n}_{\tau}+\gamma_{5}E^{n}_{\varpi}+\gamma_{2}E^{n}_{\varsigma},\qquad
	E^{n}_{T}=\gamma_{1}E^{n}_{\tau}+\gamma_{2}E^{n}_{\varpi}+\gamma_{3}E^{n}_{\varsigma},\nonumber\\
	&&E^{n}_{q}=-\gamma_{6}E^{n}_{\tau}+\gamma_{4}E^{n}_{\varpi}+\gamma_{1}E^{n}_{\varsigma}.
\end{eqnarray}
Also, we denote
\begin{eqnarray*}
	&&E^{n}_{\textbf{u}}=\textbf{u}(t_{n})-\mathcal{R}_{h}(\textbf{u}(t_{n}))+\mathcal{R}_{h}(\textbf{u}(t_{n}))-\textbf{u}^{n}_{h}:=\varLambda^{n}_{\textbf{u}}+\varPi^{n}_{\textbf{u}},\\
	&&E^{n}_{\tau}=\tau(t_{n})-\mathcal{Q}_{h}(\tau(t_{n}))+\mathcal{Q}_{h}(\tau(t_{n}))-\tau^{n}_{h}:=\Phi^{n}_{\tau}+\Psi^{n}_{\tau},\\
	&&E^{n}_{\tau}=\tau(t_{n})-\mathcal{S}_{h}(\tau(t_{n}))+\mathcal{S}_{h}(\tau(t_{n}))-\tau^{n}_{h}:=\varLambda^{n}_{\tau}+\varPi^{n}_{\tau},\\
	&&E^{n}_{\varpi}=\varpi(t_{n})-\mathcal{Q}_{h}(\varpi(t_{n}))+\mathcal{Q}_{h}(\varpi(t_{n}))-\varpi^{n}_{h}:=\Phi^{n}_{\varpi}+\Psi^{n}_{\varpi},\\
	&&E^{n}_{\varsigma}=\varsigma(t_{n})-\mathcal{Q}_{h}(\varsigma(t_{n}))+\mathcal{Q}_{h}(\varsigma(t_{n}))-\varsigma^{n}_{h}:=\Phi^{n}_{\varsigma}+\Psi^{n}_{\varsigma},\\	
	&&E^{n}_{p}=p(t_{n})-\mathcal{Q}_{h}(p(t_{n}))+\mathcal{Q}_{h}(p(t_{n}))-p^{n}_{h}:=\Phi^{n}_{p}+\Psi^{n}_{p},\\
	&&E^{n}_{p}=p(t_{n})-\mathcal{S}_{h}(p(t_{n}))+\mathcal{S}_{h}(p(t_{n}))-p^{n}_{h}:=\varLambda^{n}_{p}+\varPi^{n}_{p},\\
	&&E^{n}_{T}=T(t_{n})-\mathcal{Q}_{h}(T(t_{n}))+\mathcal{Q}_{h}(T(t_{n}))-T^{n}_{h}:=\Phi^{n}_{T}+\Psi^{n}_{T},\\
	&&E^{n}_{T}=T(t_{n})-\mathcal{S}_{h}(T(t_{n}))+\mathcal{S}_{h}(T(t_{n}))-T^{n}_{h}:=\varLambda^{n}_{T}+\varPi^{n}_{T}.
\end{eqnarray*}
\begin{lemma}\label{lem-3-4}
	Let ${(\bm{{\rm u}}^{n}_{h},\xi^{n}_{h},\varpi^{n}_{h},\gamma^{n}_{h})}$ be generated by the MFEA, then we have
	\begin{eqnarray}\label{3-52}
		&&\varSigma^{l}_{h}+\Delta t\sum_{n=0}^{l}[k_m(\nabla\widehat{\varPi}^{n}_{p},\nabla\widehat{\varPi}^{n}_{p})
		+(\bm{\Theta}\nabla\widehat{\varPi}^{n}_{T},\nabla\widehat{\varPi}^{n}_{T})
		+\frac{\Delta t}{2}(\mu\|d_{t}\nabla(\varPi^{n+1}_{\bm{{\rm u}}})\|^2_{L^{2}(\Omega)}\nonumber\\
		&&+\gamma_{6}\|d_{t}\Psi^{n+1}_{\tau}\|^2_{L^{2}(\Omega)}
		+(\gamma_{5}-\gamma_{2})\|d_{t}\Psi^{n+\theta}_{\varpi}\|^2_{L^{2}(\Omega)}+(\gamma_{3}-\gamma_{2})\|d_{t}\Psi^{n+\theta}_{\varsigma}\|^2_{L^{2}(\Omega)})]\nonumber\\
		&&\leq \widehat{\varSigma}^{-1}_{h}+\Delta t\sum_{n=0}^{l} [(\Phi^{n+1}_{\tau},\operatorname{div} d_{t}\varPi^{n+1}_{\bm{{\rm u}}})-(\operatorname{div} d_{t}\varLambda^{n+1}_{\bm{{\rm u}}},\Psi^{n+1}_{\tau})]\nonumber\\
		&&-\Delta t\sum_{n=0}^{l}[((k(\tau)-k(\tau_h^{n+1}))
		\nabla\left(\mathcal{S}_{h} p\right),\nabla \widehat{\Pi}_{p}^{n+1})]\no\\
		&&+\Delta t\sum_{n=0}^{l}[(d_{t}\Psi^{n+\theta}_{\varpi},\widehat{\varLambda}^{n+1}_{p}-\widehat{\Phi}^{n+1}_{p})+(d_{t}\Psi^{n+\theta}_{\varsigma},\widehat{\varLambda}^{n+1}_{T}-\widehat{\Phi}^{n+1}_{T})]\nonumber\\
		&&+(1-\theta)(\Delta t)^{2}\sum_{n=0}^{l}[\gamma_{4}(d^{2}_{t}\varpi(t_{n+1}),\Psi^{n+1}_{\tau})
		+\gamma_{1}(d^{2}_{t}\varsigma(t_{n+1}),\Psi^{n+1}_{\tau})]\nonumber\\
		&&+(1-\theta)(\Delta t)^{2}\sum_{n=0}^{l}[\gamma_{4}\Delta t(k(\tau_{h}^{n+1})d_{t}\nabla\varPi^{n+1}_{\tau},\nabla\widehat{\varPi}^{n+1}_{p})
		+\gamma_{1}\Delta t(\bm{\Theta}d_{t}\nabla\varPi^{n+1}_{\tau},\nabla\widehat{\varPi}^{n+1}_{T})]\nonumber\\
		&&+\Delta t\sum_{n=0}^{l}[(R^{n+\theta}_{\varpi},\widehat{\varPi}^{n+1}_{p})+(R^{n+\theta}_{\varsigma},\widehat{\varPi}^{n+1}_{T})],
	\end{eqnarray}
	where
	\begin{align}
		\widehat{\varPi}^{n+1}_{p}&:=\gamma_{4} \varPi^{n+1}_{\tau}+\gamma_{5}\varPi^{n+\theta}_{\varpi}+\gamma_{2} \varPi^{n+\theta}_{\varsigma},\\
		\widehat{\varPi}^{n+1}_{T}&:=\gamma_{1} \varPi^{n+1}_{\tau}+\gamma_{2}\varPi^{n+\theta}_{\varpi}+\gamma_{3} \varPi^{n+\theta}_{\varsigma},\\
		\varSigma^{l}_{h}:&=\frac{1}{2}(\mu\|\nabla(\varPi^{l+1}_{\bm{{\rm u}}})\|^2_{L^{2}(\Omega)}+\gamma_{6}\|\Psi^{l+1}_{\tau}\|^2_{L^{2}(\Omega)}\\\nonumber
		&+(\gamma_{5}-\gamma_{2})\|\Psi^{l+\theta}_{\varpi}\|^2_{L^{2}(\Omega)}+(\gamma_{3}-\gamma_{2})\|\Psi^{l+\theta}_{\varsigma}\|^2_{L^{2}(\Omega)}),\\
		\widehat{\varSigma}^{-1}_{h}:&=\frac{1}{2}(\mu\|\nabla(\varPi^{0}_{\bm{{\rm u}}})\|^2_{L^{2}(\Omega)}+\gamma_{6}\|\Psi^{0}_{\tau}\|^2_{L^{2}(\Omega)}\\\nonumber
		&+(\gamma_{5}+\gamma_{2})\|\Psi^{\theta-1}_{\varpi}\|^2_{L^{2}(\Omega)}+(\gamma_{3}+\gamma_{2})\|\Psi^{\theta-1}_{\varsigma}\|^2_{L^{2}(\Omega)}),\\
		R^{n+1}_{\varpi}:&=-\frac{1}{\Delta t}\int_{t_{n}}^{t_{n+1}}(t-t_{n})\varpi_{tt}(t)dt,~R^{n+1}_{\varsigma}:=-\frac{1}{\Delta t}\int_{t_{n}}^{t_{n+1}}(t-t_{n})\varsigma_{tt}(t)dt.
	\end{align}
\end{lemma}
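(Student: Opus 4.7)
The plan is to derive an error equation for each of the four equations in MFEA by subtracting the discrete scheme from the continuous weak formulation evaluated at $t=t_{n+1}$, then testing with carefully chosen discrete functions so that the projection errors decouple from the discrete errors in a form amenable to telescoping. Specifically, after writing $E_{\bu}^{n+1}=\varLambda_{\bu}^{n+1}+\varPi_{\bu}^{n+1}$, $E_\tau^{n+1}=\Phi_\tau^{n+1}+\Psi_\tau^{n+1}$, $E_\varpi^{n+\theta}=\Phi_\varpi^{n+\theta}+\Psi_\varpi^{n+\theta}$, $E_\varsigma^{n+\theta}=\Phi_\varsigma^{n+\theta}+\Psi_\varsigma^{n+\theta}$, and similarly for $p,T$, I would test \eqref{3-5} with $\bv_h=d_t\varPi_{\bu}^{n+1}$, \eqref{3-6} with $\varphi_h=\Psi_\tau^{n+1}$ (after applying $d_t$), \eqref{3-7} with $y_h=\widehat{\varPi}_p^{n+1}$, and \eqref{3-8} with $z_h=\widehat{\varPi}_T^{n+1}$. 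The $L^2$ and elliptic projection properties \eqref{eq210607-1}--\eqref{3-49} ensure that the $\Phi$- and $\varLambda$-components of the projection errors vanish against the appropriate test functions, leaving only the residual pieces $(\Phi_\tau^{n+1},\div d_t\varPi_{\bu}^{n+1})$, $(\div d_t\varLambda_{\bu}^{n+1},\Psi_\tau^{n+1})$, $(d_t\Psi_\varpi^{n+\theta},\widehat{\varLambda}_p^{n+1}-\widehat{\Phi}_p^{n+1})$, and $(d_t\Psi_\varsigma^{n+\theta},\widehat{\varLambda}_T^{n+1}-\widehat{\Phi}_T^{n+1})$ on the right-hand side.

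Next I would add the four error equations. The cross-terms involving $(d_t\Psi_\tau^{n+1},\div\varPi_{\bu}^{n+1})$ and $(\div d_t\varPi_{\bu}^{n+1},\Psi_\tau^{n+1})$ combine, and the $\gamma_4,\gamma_1,\gamma_2$ couplings between $(d_t\Psi_\varpi,\Psi_\tau)$, $(d_t\Psi_\varsigma,\Psi_\tau)$, and $(d_t\Psi_\varpi,\Psi_\varsigma)+(d_t\Psi_\varsigma,\Psi_\varpi)$ reproduce, under the same algebraic manipulations already carried out in the stability identity \eqref{3.18}, the quadratic forms in $\varSigma_h^l$ together with the dissipation terms $k_m\|\nabla\widehat{\varPi}_p^{n+1}\|^2$, $(\bt\nabla\widehat{\varPi}_T^{n+1},\nabla\widehat{\varPi}_T^{n+1})$ and the backward-Euler commutator terms $\tfrac{\Delta t}{2}\|d_t\nabla\varPi_{\bu}^{n+1}\|^2$, $\tfrac{\Delta t}{2}\|d_t\Psi_\tau^{n+1}\|^2$, etc. For the nonlinearity I would add and subtract $k(\tau_h^{n+1})\nabla(\mathcal{S}_h p)$ inside the diffusion term, so that the coercive piece $(k(\tau_h^{n+1})\nabla\widehat{\varPi}_p^{n+1},\nabla\widehat{\varPi}_p^{n+1})\ge k_m\|\nabla\widehat{\varPi}_p^{n+1}\|^2$ separates from the consistency defect $((k(\tau)-k(\tau_h^{n+1}))\nabla\mathcal{S}_h p,\nabla\widehat{\varPi}_p^{n+1})$ that appears explicitly on the right side of \eqref{3-52}.

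The time-discretization consistency is handled by writing $\varpi_t(t_{n+1}) = d_t\varpi(t_{n+1}) + R_\varpi^{n+1}$ with $R_\varpi^{n+1} = -\frac{1}{\Delta t}\int_{t_n}^{t_{n+1}}(t-t_n)\varpi_{tt}\,dt$, and analogously for $\varsigma$; when $\theta=0$ the test functions are evaluated at $t_{n+\theta}$ rather than $t_{n+1}$, which introduces the additional $(1-\theta)(\Delta t)^2$ terms on the right-hand side via the identities $\varpi(t_{n+1})-\varpi(t_n)=\Delta t\,d_t\varpi(t_{n+1})$ and the extra jumps $\gamma_4\Delta t\bigl(k(\tau_h^{n+1})d_t\nabla\varPi_\tau^{n+1},\nabla\widehat{\varPi}_p^{n+1}\bigr)$, $\gamma_1\Delta t\bigl(\bt d_t\nabla\varPi_\tau^{n+1},\nabla\widehat{\varPi}_T^{n+1}\bigr)$ already recorded in the statement. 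Finally I would apply $\Delta t\sum_{n=0}^{l}$, telescope the $d_t$-terms to get $\varSigma_h^l-\widehat{\varSigma}_h^{-1}$, and rearrange to arrive at \eqref{3-52}.

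The main obstacle I expect is the bookkeeping around the two values of $\theta$: when $\theta=1$ all test indices align at $t_{n+1}$ and the manipulation parallels the stability identity directly, but for $\theta=0$ the mixed indices $n{+}\theta$ versus $n{+}1$ in \eqref{3-6}--\eqref{3-8} mean that one must carry out an extra discrete integration-by-parts/shift in $n$ to express $d_t\Psi_\varpi^{n+\theta}$ and $d_t\Psi_\varsigma^{n+\theta}$ in compatible form, which is where the $(1-\theta)(\Delta t)^2 d_t^2\varpi(t_{n+1})$ residuals originate. Tracking the signs on $(\gamma_5\pm\gamma_2)$ and $(\gamma_3\pm\gamma_2)$ and verifying that the remaining quadratic form is nonnegative (so that it can be dropped in the inequality) is the other delicate point, and it uses exactly the same completion-of-square argument that produced $-\tfrac{\gamma_2}{2}\|\varsigma_h^{l+1}-\varpi_h^{l+1}\|^2$ in the stability lemma.
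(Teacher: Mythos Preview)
Your proposal is correct and follows essentially the same approach as the paper: subtract the discrete scheme from the continuous weak form to obtain error equations, split the nonlinear diffusion term via $\mathcal{S}_h p$, test the four error equations with $d_t\varPi_{\bu}^{n+1}$, $\Psi_\tau^{n+1}$ (after applying $d_t$), $\widehat{\varPi}_p^{n+1}$, $\widehat{\varPi}_T^{n+1}$, use the same backward-Euler identities as in the stability lemma, and sum in $n$. Your identification of the $(1-\theta)$ residual terms and the bookkeeping difficulty for $\theta=0$ also matches the paper's treatment.
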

\begin{proof}
	Subtracting (\ref{3-5}) form (\ref{2-18}), (\ref{3-6}) from (\ref{2-19}), (\ref{3-7}) from  (\ref{2-20}), (\ref{3-8}) from (\ref{2-22}), respectively, we get the following error equations:
	\begin{align}
\mu\left(\nabla\left(E_{\mathbf{u}}^{n+1}\right),\nabla\left(\mathbf{v}_{h}\right)\right)-\left(E_{\tau}^{n+1},\nabla \cdot \mathbf{v}_{h}\right)=0 \quad \forall \mathbf{v}_{h} \in \mathbf{V}_{h},\\
\gamma_{6}\left(E_{\tau}^{n+1},\varphi_{h}\right)+\left(\nabla \cdot E_{\mathbf{u}}^{n+1},\varphi_{h}\right)=\gamma_{4}\left(E_{\varpi}^{n+\theta},\varphi_{h}\right)+(1-\theta) \gamma_{4} \Delta t\left(d_{t} \varpi\left(t_{n+1}\right),\varphi_{h}\right)\no \\
+\gamma_{1}\left(E_{\varsigma}^{n+\theta},\varphi_{h}\right)+(1-\theta) \gamma_{1} \Delta t\left(d_{t} \varsigma\left(t_{n+1}\right),\varphi_{h}\right) \quad \forall \tau_{h} \in M_{h},\\
\left(d_{t} E_{\varpi}^{n+\theta}, y_{h}\right)+\left(k(\tau)
 \nabla\left(p\right)-k(\tau_h^{n+1})
 \nabla\left(p_h^{n+1}\right),\nabla y_{h}\right)+(1-\theta) \gamma_{4} \Delta t\left(k(\tau_h^{n+1}) d_{t} \nabla E_{\tau}^{n+1},\nabla y_{h}\right) \no \\
=\left(R_{\varpi}^{n+\theta}, y_{h}\right) \quad \forall y_{h} \in W_{h},\\
\left(d_{t} E_{\varsigma}^{n+\theta}, z_{h}\right)+\left(\boldsymbol{\Theta}\left(\widehat{E}_{T}^{n+1}\right),\nabla z_{h}\right)-(1-\theta) \gamma_{1} \Delta t\left(\boldsymbol{\Theta} d_{t} \nabla E_{\tau}^{n+1},\nabla z_{h}\right) \no \\
=\left(R_{\varsigma}^{n+\theta}, z_{h}\right) \quad \forall z_{h} \in Z_{h}.
	\end{align}
Using \eqref{3-48}, we obtain
\begin{align}\label{3-61}
	&k(\tau)
	\nabla\left(p\right)-k(\tau_h^{n+1})
	\nabla\left(p_h^{n+1}\right)\no\\
	=&k(\tau)
	\left(\nabla p -\nabla \mathcal{S}_{h} p\right)+(k(\tau)-k(\tau_h^{n+1}))
	\nabla\left(\mathcal{S}_{h} p\right)+k(\tau_h^{n+1})(\nabla \mathcal{S}_{h} p-\nabla p_h^{n+1})\no\\
	=&k(\tau)
	\nabla \Lambda_h^{n+1}+(k(\tau)-k(\tau_h^{n+1}))
	\nabla\left(\mathcal{S}_{h} p\right)+k(\tau_h^{n+1})\nabla \Pi_p^{n+1}.
\end{align}
Using the definition of the projection operators $\mathcal{Q}_{h},\mathcal{S}_{h},\mathcal{R}_{h}$,\eqref{3-61} and  the above error equations, we have
\begin{align}
	\mu\left(\nabla\left(\Pi_{\mathbf{u}}^{n+1}\right),\nabla\left(\mathbf{v}_{h}\right)\right)-\left(\Psi_{\tau}^{n+1},\nabla \cdot \mathbf{v}_{h}\right)=\left(\Phi_{\tau}^{n+1},\nabla \cdot \mathbf{v}_{h}\right) \quad \forall \mathbf{v}_{h} \in V_{h},\label{3-62}\\
	\gamma_{6}\left(\Psi_{\tau}^{n+1},\varphi_{h}\right)+\left(\nabla \cdot \Pi_{\mathrm{u}}^{n+1},\varphi_{h}\right)=-\left(\nabla \cdot \Lambda_{\mathbf{u}}^{n+1},\varphi_{h}\right)+\gamma_{4}\left(\Psi_{\varpi}^{n+\theta},\varphi_{h}\right) \no\\
	+(1-\theta) \gamma_{4} \Delta t\left(d_{t} \varpi\left(t_{n+1}\right),\varphi_{h}\right)+\gamma_{1}\left(\Psi_{\varsigma}^{n+\theta},\varphi_{h}\right) \no\\
	+(1-\theta) \gamma_{1} \Delta t\left(d_{t} \varsigma\left(t_{n+1}\right),\varphi_{h}\right) \quad \forall \varphi_{h} \in M_{h},\label{3-63}\\
	\left(d_{t} \Psi_{\varpi}^{n+\theta}, y_{h}\right)+\left((k(\tau)-k(\tau_h^{n+1}))
	\nabla\left(\mathcal{S}_{h} p\right)+k(\tau_h^{n+1})\nabla \Pi_p^{n+1},\nabla y_{h}\right)\no \\-(1-\theta) \gamma_{4} \Delta t\left(\boldsymbol{K} d_{t} \nabla \Pi_{\tau}^{n+1},\nabla y_{h}\right)
	=\left(R_{\varpi}^{n+\theta}, y_{h}\right) \quad \forall y_{h} \in W_{h},\label{3-64}\\
	\left(d_{t} \Psi_{\varsigma}^{n+\theta}, z_{h}\right)+\left(\boldsymbol{\Theta}\left(\widehat{\Pi}_{T}^{n+1}\right),\nabla z_{h}\right)-(1-\theta) \gamma_{1} \Delta t\left(\boldsymbol{\Theta} d_{t} \nabla \Pi_{\tau}^{n+1},\nabla z_{h}\right) \no\\
	=\left(R_{\varsigma}^{n+\theta}, z_{h}\right) \quad \forall z_{h} \in Z_{h}.\label{3-65}
\end{align}
Then taking $\textbf{v}_{h}=d_{t}\varPi^{n+1}_{\textbf{u}}$ in (\ref{3-62}), $\varphi_{h}=\Psi^{n+1}_{\tau}$ (after applying the difference operator $d_{t}$ to the equation (\ref{3-63}), $y_{h}=\widehat{\varPi}^{n+1}_{p}=\widehat{\Phi}^{n+1}_{p}-\widehat{\varLambda}^{n+1}_{p}+\gamma_{4}\Psi^{n+1}_{\tau}+\gamma_{5}\Psi^{n}_{\varpi}+\gamma_{2}\Psi^{n}_{\varsigma}$ in (\ref{3-64}) and $z_{h}=\widehat{\varPi}^{n+1}_{T}=\widehat{\Phi}^{n+1}_{T}-\widehat{\varLambda}^{n+1}_{T}+\gamma_{1}\Psi^{n+1}_{\tau}+\gamma_{2}\Psi^{n}_{\varpi}+\gamma_{3}\Psi^{n}_{\varsigma}$ in (\ref{3-65}), we get
\begin{align}
	\mu\left(\nabla\left(\Pi_{\mathrm{u}}^{n+1}\right),\nabla\left(d_{t} \Pi_{\mathbf{u}}^{n+1}\right)\right)+\gamma_{6}\left(d_{t} \Psi_{\tau}^{n+1},\Psi_{\tau}^{n+1}\right)+\gamma_{5}\left(d_{t} \Psi_{\varpi}^{n+\theta},\Psi_{\varpi}^{n+\theta}\right) \no\\
	+\gamma_{3}\left(d_{t} \Psi_{\varsigma}^{n+\theta},\Psi_{\varsigma}^{n+\omega}\right)+\gamma_{2}\left(d_{t} \Psi_{\varpi}^{n+\theta},\Psi_{\varsigma}^{n+\theta}\right)+\gamma_{2}\left(d_{t} \Psi_{\varsigma}^{n+\theta},\Psi_{\varpi}^{n+\theta}\right) \no\\
	+\left((k(\tau)-k(\tau_h^{n+1}))
	\nabla\left(\mathcal{S}_{h} p\right)+k(\tau_h^{n+1})\nabla \Pi_p^{n+1},\nabla \widehat{\Pi}_{p}^{n+1}\right)+\left(\boldsymbol{\Theta} \widehat{\Pi}_{T}^{n+1},\widehat{\Pi}_{T}^{n+1}\right) \no\\
	-(1-\theta) \gamma_{4} \Delta t\left(k(\tau_{h}^{n+1}) d_{t} \nabla \Pi_{\tau}^{n+1},\nabla \widehat{\Pi}_{p}^{n+1}\right)-(1-\theta) \gamma_{1} \Delta t\left(\boldsymbol{\Theta} d_{t} \nabla \Pi_{\tau}^{n+1},\nabla \Pi_{T}^{n+1}\right) \no\\
	=\left(\Phi_{\tau}^{n+1},\operatorname{div} d_{t} \Pi_{\mathrm{u}}^{n+1}\right)-\left(\operatorname{div} d_{t} \Lambda_{\mathrm{u}}^{n+1},\Psi_{\tau}^{n+1}\right)+\left(d_{t} \Psi_{\varpi}^{n+\theta},\widehat{\Lambda}_{p}^{n+1}-\widehat{\Phi}_{p}^{n+1}\right) \no\\
	+\left(d_{t} \Psi_{\varsigma}^{n+\theta},\widehat{\Lambda}_{T}^{n+1}-\widehat{\Phi}_{T}^{n+1}\right)+(1-\theta) \gamma_{4} \Delta t\left(d_{t}^{2} \varpi\left(t_{n+1}\right),\Psi_{\tau}^{n+1}\right) \no\\
	+(1-\theta) \gamma_{1} \Delta t\left(d_{t}^{2} \varsigma\left(t_{n+1}\right),\Psi_{\tau}^{n+1}\right)+\left(R_{h}^{n+\theta},\widehat{\Pi}_{p}^{n+1}\right)+\left(R_{\varsigma}^{n+\theta},\widehat{\Pi}_{T}^{n+1}\right) \text {. }\label{3-66}
\end{align}
From \eqref{3-27}--\eqref{3-29}, we obtain 
\begin{alignat}{2}
	\mu\left(\nabla\left(\Pi_{\mathrm{u}}^{n+1}\right),\nabla\left(d_{t} \Pi_{\mathrm{u}}^{n+1}\right)\right)=\frac{\mu}{2}\left(d_{t}\left\|\nabla\left(\Pi_{\mathrm{u}}^{n+1}\right)\right\|_{L^{2}(\Omega)}^{2}+\Delta t\left\|d_{t} \nabla\left(\Pi_{\mathrm{u}}^{n+1}\right)\right\|_{L^{2}(\Omega)}^{2}\right),\label{3-67}\\
	\gamma_{6}\left(d_{t} \Psi_{\tau}^{n+1},\Psi_{\tau}^{n+1}\right)=\frac{\gamma_{6}}{2}\left(d_{t}\left\|\Psi_{\tau}^{n+1}\right\|_{L^{2}(\Omega)}^{2}+\Delta t\left\|d_{t} \Psi_{\tau}^{n+1}\right\|_{L^{2}(\Omega)}^{2}\right),\\
	\gamma_{5}\left(d_{t} \Psi_{\varpi}^{n+\theta},\Psi_{\varpi}^{n+\theta}\right)=\frac{\gamma_{5}}{2}\left(d_{t}\left\|\Psi_{\varpi}^{n+\theta}\right\|_{L^{2}(\Omega)}^{2}+\Delta t\left\|d_{t} \Psi_{\varpi}^{n+\theta}\right\|_{L^{2}(\Omega)}^{2}\right),\\
	\gamma_{3}\left(d_{t} \Psi_{\varsigma}^{n+\theta},\Psi_{\varsigma}^{n+\theta}\right)=\frac{\gamma_{3}}{2}\left(d_{t}\left\|\Psi_{\varsigma}^{n+\theta}\right\|_{L^{2}(\Omega)}^{2}+\Delta t\left\|d_{t} \Psi_{\varsigma}^{n+\theta}\right\|_{L^{2}(\Omega)}^{2}\right),\\
	\gamma_{2}\left(d_{t} \Psi_{\varpi}^{n+\theta},\Psi_{\varsigma}^{n+\theta}\right)+\gamma_{2}\left(d_{t} \Psi_{\varsigma}^{n+\theta},\Psi_{\varpi}^{n+\theta}\right) \leq \frac{\gamma_{2}}{2}\left(\Delta t\left\|d_{t} \Psi_{\varpi}^{n+\theta}\right\|_{L^{2}(\Omega)}^{2}\right. \no \\
	\left.+\Delta t\left\|d_{t} \Psi_{\varsigma}^{n+\theta}\right\|_{L^{2}(\Omega)}^{2}\right)+\gamma_{2} d_{t}\left(\Psi_{\varpi}^{n+\theta},\Psi_{\varsigma}^{n+\theta}\right) \text {. } \label{3-72}
\end{alignat}
Substituting \eqref{3-67}--\eqref{3-72} into \eqref{3-66}, and applying the summation operation $\Delta t \textstyle \sum_{n=0}^{l}$ to the both sides of \eqref{3-66}, we implies that \eqref{3-52} holds. The proof is complete.
\end{proof}

\begin{lemma}\label{th-3-5}
Assuming that $\Delta t =O(h^2)$ when $\theta=0 $ and $\Delta t >0$ when $\theta=1$, we consider $P_{k}-P_{j}-P_{j}-P_{j}$ for $\bu,\tau,\varpi,\varsigma$ then there holds
\begin{alignat}{2}
&\max_{0\le n\leq l} [\sqrt{\mu}h^{j-k+1}\left\|\nabla\left(\Pi_{\mathbf{u}}^{n+1}\right)\right\|_{L^{2}(\Omega)}+\sqrt{\gamma_{6}}\left\|\Psi_{\tau}^{n+1}\right\|_{L^{2}(\Omega)}+\left(\sqrt{\gamma_{5}-\gamma_{2}}\right)\left\|\Psi_{\varpi}^{n+1}\right\|_{L^{2}(\Omega)}+\no\\
&\quad\left(\sqrt{\gamma_{3}-\gamma_{2}}\right)\left\|\Psi_{\varsigma}^{n+1}\right\|_{L^{2}(\Omega)} ]\no\\
&\quad+\Delta t \sum_{n=0}^{l}\left(\left\|  \widehat{\Pi}_{p}^{n+1}\right\|_{L^{2}(\Omega)}+\left\| \widehat{\Pi}_{T}^{n+1}\right\|_{L^{2}(\Omega)}\right)^{\frac{1}{2}}\no\\
&\leq C_1(t_f)\Delta t+ C_2(t_f) h^{j+1},\label{3.73}
\end{alignat}
where
\begin{alignat}{2}
	C_1(t)&=C(\Norm{\varpi_{t}}{L^2(0,t_f;L^2(\Omega))}^2+
	\Norm{\varsigma_{t}}{L^2(0,t_f;L^2(\Omega))}^2
	)\no\\
	&+C(\Norm{\varpi_{tt}}{L^2(0,t_f;H^1(\Omega))'}^2+
	\Norm{\varsigma_{tt}}{L^2(0,t_f;H^1(\Omega))'}^2),\no\\
	C_2(t)&=C(\Norm{\tau}{L^{\infty}(0,t_f;H^2(\Omega))}
	+\Norm{p}{L^{\infty}(0,t_f;H^2(\Omega))}+
	\Norm{T}{L^{\infty}(0,t_f;H^2(\Omega))})\no\\
	&+C(\Norm{\tau}{L^{2}(0,t_f;H^2(\Omega))}+\Norm{\tau_t}{L^{2}(0,t_f;H^2(\Omega))}+
	\Norm{p}{L^{2}(0,t_f;H^2(\Omega))}\no\\&+
	\Norm{T}{L^{2}(0,t_f;H^2(\Omega))}+
	\Norm{{\rm div}(\bu)_t}{L^{2}(0,t_f;H^2(\Omega))}).
\end{alignat}
\end{lemma}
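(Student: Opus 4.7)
The plan is to start from the energy inequality \eqref{3-52} of Lemma \ref{lem-3-4} and bound every term on its right-hand side either by quantities that can be absorbed into the left-hand side using Young's inequality, or by the projection errors $\Lambda,\Phi$ (which are $O(h^{j+1})$) and the truncation residuals $R_{\varpi},R_{\varsigma}$ (which are $O(\Delta t)$). The final step will be the discrete Gronwall inequality applied to the resulting recurrence for $\Sigma^{l}_{h}$, yielding the asserted bound uniformly in $l$.

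First I would handle the elliptic/projection terms. By Cauchy--Schwarz and Young's inequality, $(\Phi^{n+1}_{\tau},\operatorname{div} d_{t}\Pi^{n+1}_{\mathbf{u}})$ is absorbed into $\tfrac{\mu\Delta t}{4}\|d_{t}\nabla\Pi^{n+1}_{\mathbf{u}}\|^{2}$ on the left, while leaving $C\|\Phi^{n+1}_{\tau}\|^{2}\lesssim h^{2(j+1)}$. Similarly $(\operatorname{div} d_{t}\Lambda^{n+1}_{\mathbf{u}},\Psi^{n+1}_{\tau})$ is split; note that $d_{t}\Lambda^{n+1}_{\mathbf{u}}$ can be written as an integral average of $\partial_{t}(\mathbf{u}-\mathcal{R}_{h}\mathbf{u})$ and bounded by $h^{k+1}\|\mathbf{u}_{t}\|_{L^{2}(t_{n},t_{n+1};H^{k+1})}/\sqrt{\Delta t}$, so after Young it contributes an $h^{2(j+1)}$-type term plus a piece absorbed into $\|\Psi^{n+1}_{\tau}\|^{2}$ (which will be handled by Gronwall). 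The terms $(d_{t}\Psi^{n+\theta}_{\varpi},\widehat{\Lambda}^{n+1}_{p}-\widehat{\Phi}^{n+1}_{p})$ and its $\varsigma$-analogue are treated identically: the $d_{t}\Psi$ factor is absorbed into $\tfrac{(\gamma_{5}-\gamma_{2})\Delta t}{2}\|d_{t}\Psi^{n+\theta}_{\varpi}\|^{2}$ and its $\varsigma$-counterpart.

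The main obstacle will be the nonlinear permeability contribution $((k(\tau)-k(\tau^{n+1}_{h}))\nabla(\mathcal{S}_{h}p),\nabla\widehat{\Pi}^{n+1}_{p})$. Since $k(s)=ak_{0}e^{-bs}$ on the relevant range is Lipschitz (and by Assumption A1 it is uniformly bounded), we have $|k(\tau)-k(\tau^{n+1}_{h})|\le L|E^{n+1}_{\tau}|\le L(|\Lambda^{n+1}_{\tau}|+|\Pi^{n+1}_{\tau}|)$ pointwise. Using the a priori $L^{\infty}$-regularity of $\nabla(\mathcal{S}_{h}p)$ (which follows from $p\in L^{\infty}(0,t_{f};H^{2}(\Omega))$ and stability of $\mathcal{S}_{h}$), Cauchy--Schwarz plus Young's inequality give a bound by $\tfrac{k_{m}}{2}\|\nabla\widehat{\Pi}^{n+1}_{p}\|^{2}+C(\|\Lambda^{n+1}_{\tau}\|^{2}+\|\Pi^{n+1}_{\tau}\|^{2})$, where the gradient term is absorbed into the coercive contribution $(k(\tau^{n+1}_{h})\nabla p^{n+1}_{h},\nabla p^{n+1}_{h})\ge k_{m}\|\nabla\widehat{\Pi}^{n+1}_{p}\|^{2}/2$ after a further decomposition using \eqref{3-61}. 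Since $\Pi^{n+1}_{\tau}$ and $\Lambda^{n+1}_{\tau}$ differ only by the elliptic projection error (which is $O(h^{j+1})$), we obtain a Gronwall-compatible term $C\|\Psi^{n+1}_{\tau}\|^{2}+Ch^{2(j+1)}$.

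For the time-truncation terms $(R^{n+\theta}_{\varpi},\widehat{\Pi}^{n+1}_{p})$ and $(R^{n+\theta}_{\varsigma},\widehat{\Pi}^{n+1}_{T})$, I would exploit
\[
\|R^{n+1}_{\varpi}\|_{(H^{1})^{\prime}}^{2}\le \Delta t\int_{t_{n}}^{t_{n+1}}\|\varpi_{tt}(s)\|_{(H^{1})^{\prime}}^{2}\,ds,
\]
combine with duality pairing $(R^{n+\theta}_{\varpi},\widehat{\Pi}^{n+1}_{p})\le \|R^{n+\theta}_{\varpi}\|_{(H^{1})^{\prime}}\|\widehat{\Pi}^{n+1}_{p}\|_{H^{1}}$, and Young-absorb the gradient into the remaining $k_{m}/4$ slot, leaving $C(\Delta t)^{2}\|\varpi_{tt}\|_{L^{2}((H^{1})^{\prime})}^{2}$ after summation, which accounts for the $C_{1}(t_{f})\Delta t$ term in \eqref{3.73}. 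The explicit factors $(1-\theta)\Delta t(d^{2}_{t}\varpi,\Psi^{n+1}_{\tau})$ and $(1-\theta)\Delta t\gamma_{4}(k(\tau^{n+1}_{h})d_{t}\nabla\Pi^{n+1}_{\tau},\nabla\widehat{\Pi}^{n+1}_{p})$ are only present when $\theta=0$; for these I would follow the strategy of the Corollary: use the inverse inequality \eqref{3.4} together with the inf-sup \eqref{3.3} to trade $\|\nabla\Psi^{n+1}_{\tau}-\nabla\Psi^{n}_{\tau}\|$ for $\mu\Delta t\|d_{t}\nabla\Pi^{n+1}_{\mathbf{u}}\|$, so that the CFL assumption $\Delta t=O(h^{2})$ ensures these pieces are absorbed into $\tfrac{\mu\Delta t}{4}\|d_{t}\nabla\Pi^{n+1}_{\mathbf{u}}\|^{2}$. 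Once all the above bounds are collected and summed, the initial term $\widehat{\Sigma}^{-1}_{h}$ is controlled by the projection errors at $t=0$, yielding $O(h^{2(j+1)})$, and a discrete Gronwall argument on the accumulated $\|\Psi^{n+1}_{\tau}\|^{2}+\|\Psi^{n+\theta}_{\varpi}\|^{2}+\|\Psi^{n+\theta}_{\varsigma}\|^{2}$ closes the estimate in the form stated. Taking square roots and the maximum over $n$ delivers \eqref{3.73}.
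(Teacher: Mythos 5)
Your overall architecture is the same as the paper's: start from the inequality of Lemma \ref{lem-3-4}, bound each right-hand-side term by projection errors of size $h^{j+1}$ or truncation residuals of size $\Delta t$, absorb gradient contributions into the coercive terms, and close with the discrete Gronwall lemma. Your treatments of the residuals $R_{\varpi},R_{\varsigma}$, of the nonlinear permeability term (Lipschitz split $|k(\tau)-k(\tau_h^{n+1})|\le L(|\varLambda_\tau^{n+1}|+|\varPi_\tau^{n+1}|)$ together with an $L^\infty$ bound on $\nabla\mathcal{S}_h p$ — you are in fact more careful than the paper here, which writes a product of three $L^2$ norms), and of the $(1-\theta)$ term $(k(\tau_h^{n+1})d_t\nabla\varPi_\tau^{n+1},\nabla\widehat{\varPi}_p^{n+1})$ via the inf-sup condition \eqref{3.3}, the inverse inequality \eqref{3.4} and the CFL restriction $\Delta t=O(h^2)$ all match the paper.

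However, there is a genuine gap in your handling of the term $\Delta t\sum_n(\Phi^{n+1}_{\tau},\operatorname{div} d_{t}\varPi^{n+1}_{\mathbf{u}})$. You claim it can be absorbed into $\tfrac{\mu\Delta t}{4}\|d_t\nabla\varPi_{\mathbf{u}}^{n+1}\|^2$ while leaving only $C\|\Phi_\tau^{n+1}\|^2\lesssim h^{2(j+1)}$ per step, but these two statements are mutually inconsistent. The slot on the left-hand side of \eqref{3-52} carries the weight $\tfrac{\mu(\Delta t)^2}{2}\|d_t\nabla\varPi_{\mathbf{u}}^{n+1}\|^2$ after the outer $\Delta t\sum$, whereas the term to be absorbed carries only a single factor of $\Delta t$; Young's inequality therefore forces a parameter $\epsilon\sim\Delta t$, and the remainder becomes $\tfrac{C}{\Delta t}\|\Phi_\tau^{n+1}\|^2$ per step, i.e. $\sum_n\|\Phi_\tau^{n+1}\|^2\sim \tfrac{t_f}{\Delta t}h^{2(j+1)}$ after summation. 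Under $\Delta t=O(h^2)$ this is $O(h^{2j})$ and costs a full order of accuracy. The paper avoids this by a discrete summation by parts in time, rewriting
\begin{align*}
\Delta t\sum_{n=1}^{l}(\Phi^{n+1}_{\tau},\operatorname{div} d_{t}\varPi^{n+1}_{\mathbf{u}})
=(\Phi^{l+1}_{\tau},\operatorname{div}\varPi^{l+1}_{\mathbf{u}})
-\Delta t\sum_{n=1}^{l}(d_{t}\Phi^{n+1}_{\tau},\operatorname{div} \varPi^{n+1}_{\mathbf{u}}),
\end{align*}
so that the difference quotient lands on $\Phi_\tau$ (controlled by $h^{j+1}\|\tau_t\|$) and the remaining factor $\|\nabla\varPi_{\mathbf{u}}^{n+1}\|^2$ is handled by Gronwall; this is also why $\|\tau_t\|_{L^2(0,t_f;H^2(\Omega))}$ appears in $C_2(t_f)$. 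The same device (summation by parts in $n$, using $d_t\varpi_h(t_0)=d_t\varsigma_h(t_0)=0$) is what the paper uses for the term $(1-\theta)(\Delta t)^2\sum_n(d_t^2\varpi(t_{n+1}),\Psi_\tau^{n+1})$, which you only gesture at; a direct estimate there would require $\varpi_{tt}\in L^2(L^2)$ rather than the $\|\varpi_t\|_{L^2(L^2)}$ appearing in $C_1(t_f)$. With these two terms reworked by summation by parts in time, the rest of your argument goes through.
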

\begin{proof}
To derive the above inequality, we need to bound each term on the right-hand side of \eqref{3-52}. Using the fact $\varPi_{\bu}^0,\varPi_{\tau}^0=0,\varPi_{\varpi}^{-1}=0$ and $\varPi_{\varsigma}^{-1}=0$, we have
		\begin{eqnarray}\label{3-73}
		&&\varSigma^{l}_{h}+\Delta t\sum_{n=0}^{l}[(k\nabla\widehat{\varPi}^{n}_{p},\nabla\widehat{\varPi}^{n}_{p})
		+(\bm{\Theta}\nabla\widehat{\varPi}^{n}_{T},\nabla\widehat{\varPi}^{n}_{T})
		+\frac{\Delta t}{2}(\mu\|d_{t}\nabla(\varPi^{n+1}_{\bm{{\rm u}}})\|^2_{L^{2}(\Omega)}\nonumber\\
		&&+\gamma_{6}\|d_{t}\Psi^{n+1}_{\tau}\|^2_{L^{2}(\Omega)}
		+(\gamma_{5}-\gamma_{2})\|d_{t}\Psi^{n+\theta}_{\varpi}\|^2_{L^{2}(\Omega)}+(\gamma_{3}-\gamma_{2})\|d_{t}\Psi^{n+\theta}_{\varsigma}\|^2_{L^{2}(\Omega)})]\nonumber\\
		&&\leq \Delta t\sum_{n=0}^{l} [(\Phi^{n+1}_{\tau},\operatorname{div} d_{t}\varPi^{n+1}_{\bm{{\rm u}}})-(\operatorname{div} d_{t}\varLambda^{n+1}_{\bm{{\rm u}}},\Psi^{n+1}_{\tau})]\nonumber\\
		&&-\Delta t\sum_{n=0}^{l}[(k(\tau)-k(\tau_h^{n+1}))
		\nabla\mathcal{S}_{h} p,\nabla \widehat{\Pi}_{p}^{n+1})]\no\\
		&&+\Delta t\sum_{n=0}^{l}[(d_{t}\Psi^{n+\theta}_{\varpi},\widehat{\varLambda}^{n+1}_{p}-\widehat{\Phi}^{n+1}_{p})+(d_{t}\Psi^{n+\theta}_{\varsigma},\widehat{\varLambda}^{n+1}_{T}-\widehat{\Phi}^{n+1}_{T})]\nonumber\\
		&&+(1-\theta)(\Delta t)^{2}\sum_{n=0}^{l}[\gamma_{4}(d^{2}_{t}\theta(t_{n+1}),\Psi^{n+1}_{\tau})
		+\gamma_{1}(d^{2}_{t}\varsigma(t_{n+1}),\Psi^{n+1}_{\tau})]\nonumber\\
		&&+(1-\theta)(\Delta t)^{2}\sum_{n=0}^{l}[\gamma_{4}\Delta t(k(\tau_{h}^{n+1})d_{t}\nabla\varPi^{n+1}_{\tau},\nabla\widehat{\varPi}^{n+1}_{p})
		+\gamma_{1}\Delta t(\bm{\Theta}d_{t}\nabla\varPi^{n+1}_{\tau},\nabla\widehat{\varPi}^{n+1}_{T})]\nonumber\\
		&&+\Delta t\sum_{n=0}^{l}[(R^{n+\theta}_{\varpi},\widehat{\varPi}^{n+1}_{p})+(R^{n+\theta}_{\varsigma},\widehat{\varPi}^{n+1}_{T})].
	\end{eqnarray}
We now estimate each term on the right-hand side of \eqref{3-73}. The last term on the right-hand side of \eqref{3-73} can be bounded by 
	\begin{align}
		&\left|\left(R_{\varpi}^{n+\theta},\widehat{\Pi}_{p}^{n+1}\right)\right| \leq\left\|R_{\varpi}^{n+\theta}\right\|_{H^{1}(\Omega)}\left\|\widehat{\Pi}_{p}^{n+1}\right\|_{L^{2}(\Omega)} \no\\
		& \leq \frac{k_{m}}{4}\left\|\widehat{\Pi}_{p}^{n+1}\right\|_{L^{2}(\Omega)}^{2}+\frac{1}{k_{m}}\left\|R_{\varpi}^{n+\theta}\right\|_{H^{1}(\Omega)^{\prime}}^{2} \no\\
		&\leq \frac{k_{m}}{8}\left\|\nabla \widehat{\Pi}_{p}^{n+1}\right\|_{L^{2}(\Omega)}^{2}+\frac{2 \Delta t}{3 k_{m}}\left\|\varpi_{t t}\right\|_{L^{2}\left(\left(t_{n+\theta-1}, t_{n+\theta}\right) ; H^{1}(\Omega)^{\prime}\right)}^{2},\label{3-74}
	\end{align}
where we have used the fact that
\begin{align}
	\left\|R_{\varpi}^{n+\theta}\right\|_{H^{1}(\Omega)^{\prime}}^{2} \leq \frac{\Delta t}{3} \int_{t_{n+\theta-1}}^{t_{n+\theta}}\left\|\varpi_{t t}\right\|_{H^{1}(\Omega)^{\prime}}^{2} d t.
\end{align}
Similarly, we can obtain
\begin{alignat}{2}
	\left|\left(R_{\varsigma}^{n+\theta},\widehat{\Pi}_{T}^{n+1}\right)\right| &\leq \frac{\theta_{m}}{8}\left\|\nabla \widehat{\Pi}_{T}^{n+1}\right\|_{L^{2}(\Omega)}^{2}+\frac{2 \Delta t}{3 \theta_{m}}\left\|\varsigma_{t t}\right\|_{L^{2}\left(\left(t_{n+\theta-1}, t_{n+\theta}\right) ; H^{1}(\Omega)^{\prime}\right)}^{2},\\
	|(R^{n}_{\varsigma},\widehat{\varPi}^{n+1}_{T})|&=|(R^{n}_{\varsigma},\widehat{\Phi}^{n+1}_{T}+\widehat{\Psi}^{n+1}_{T}-\widehat{\varLambda}^{n+1}_{T})|\nonumber\\
	&\leq \frac{\Delta t}{2}\|\varsigma_{tt}\|^{2}_{L^{2}(t_{n-1},t_{n};H^{1}(\Omega)')}+\frac{1}{2}\|\widehat{\Phi}^{n+1}_{T}\|^{2}_{L^{2}(\Omega)}\nonumber\\
	&+\frac{1}{2}\|\widehat{\Psi}^{n+1}_{T}\|^{2}_{L^{2}(\Omega)}+\frac{1}{2}\|\widehat{\varLambda}^{n+1}_{T}\|^{2}_{L^{2}(\Omega)}.
\end{alignat}
The first term on the right-hand side of (\ref{3-73}) can be bounded by
\begin{eqnarray}
	&&\Delta t\sum_{n=1}^{l} [(\Phi^{n+1}_{\tau},\operatorname{div} d_{t}\varPi^{n+1}_{\textbf{u}})-(\operatorname{div} d_{t}\varLambda^{n+1}_{\textbf{u}},\Psi^{n+1}_{\tau})]\nonumber\\
	&&=(\Phi^{l+1}_{\tau},\operatorname{div}\varPi^{l+1}_{\textbf{u}})-\Delta t\sum_{n=1}^{l}[(d_{t}\Phi^{n+1}_{\tau},\operatorname{div} \varPi^{n+1}_{\textbf{u}})+(\operatorname{div} d_{t}\varLambda^{n+1}_{\textbf{u}},\Psi^{n+1}_{\tau})]\nonumber\\
	&&\leq\frac{C}{\mu}\|\Phi^{l+1}_{\tau}\|^{2}_{L^{2}(\Omega)}+\frac{\mu}{4}\|\nabla(\varPi^{l+1}_{\textbf{u}})\|^{2}_{L^{2}(\Omega)}+\Delta t\sum_{n=1}^{l}\Big[\frac{C}{\mu}\|d_{t}\Phi^{n+1}_{\tau}\|^{2}_{L^{2}(\Omega)}\nonumber\\
	&&+\frac{\mu}{4}\|\nabla(\varPi^{n+1}_{\textbf{u}})\|^{2}_{L^{2}(\Omega)}+\frac{1}{\gamma_{6}}\|\operatorname{div} d_{t}\varLambda^{n+1}_{\textbf{u}}\|^{2}_{L^{2}(\Omega)}+\frac{\gamma_{6}}{4}\|\Psi^{n+1}_{\tau}\|^{2}_{L^{2}(\Omega)}\Big].
\end{eqnarray}
The second term on the right-hand side of (\ref{3-73}) can be bounded by
\begin{align}
	&(k(\tau)-k(\tau_h^{n+1}))
	\nabla\mathcal{S}_{h} p,\nabla \widehat{\Pi}_{p}^{n+1})\no\\
	&\leq L \Norm{\tau-\tau_h^{n+1}}{L^{2}(\Omega)}
	\Norm{	\nabla\mathcal{S}_{h} p}{L^{2}(\Omega)}
	\Norm{ \nabla \widehat{\Pi}_{p}^{n+1}}{L^{2}(\Omega)}\no\\
	&\leq \frac{L}{4 \epsilon_2}\Norm{\tau-\tau_h^{n+1}}{L^{2}(\Omega)}
	\Norm{	\nabla\mathcal{S}_{h} p}{L^{2}(\Omega)}
	+\epsilon_2 \Norm{ \nabla \widehat{\Pi}_{p}^{n+1}}{L^{2}(\Omega)}\no\\
	&\leq  \frac{L}{4 \epsilon_2}\Norm{\varLambda^{n+1}_{\tau}+\varPi^{n+1}_{\tau}}{L^{2}(\Omega)}
	\Norm{	\nabla\mathcal{S}_{h} p}{L^{2}(\Omega)}
	+\epsilon_2 \Norm{ \nabla \widehat{\Pi}_{p}^{n+1}}{L^{2}(\Omega)}\no\\
	&\leq  \frac{L}{4 \epsilon_2}\left[\Norm{\varLambda^{n+1}_{\tau}}{L^{2}(\Omega)}
	+\Norm{	\varPi^{n+1}_{\tau}}{L^{2}(\Omega)}\right]
	+\epsilon_2 \Norm{ \nabla \widehat{\Pi}_{p}^{n+1}}{L^{2}(\Omega)}.
\end{align}
The third term on the right-hand side of (\ref{3-73}) can be bounded by
\begin{align}
	&\Delta t\sum_{n=0}^{l}(d_{t}\Psi^{n+\theta}_{\varpi},\widehat{\varLambda}^{n+1}_{p}-\widehat{\Phi}^{n+1}_{p})\nonumber\\
	&=\Delta t\sum_{n=0}^{l}[\Delta t(d_t\Psi_{\varpi}^{n+\theta},d_t(\widehat{\varLambda}^{n+1}_{p}-\widehat{\Phi}^{n+1}_{p}))
	+d_t(\Psi^{n+\theta}_{\varpi},\widehat{\varLambda}^{n+1}_{p}-\widehat{\Phi}^{n+1}_{p})\nonumber\\
	&-(\Psi^{n+\theta}_{\varpi},d_t(\widehat{\varLambda}^{n+1}_{T}-\widehat{\Phi}^{n+1}_{p}))]
	\leq (\Delta t)^{2}\sum^{l}_{n=0}\big[\frac{\gamma_5-\gamma_2}{4}\|d_t\Psi^{n+\theta}_{\varpi}\|^{2}_{L^{2}(\Omega)}\nonumber\\
	&+\frac{1}{\gamma_5-\gamma_2}(\|d_t\widehat{\varLambda}^{n+1}_{p}\|^{2}_{L^{2}(\Omega)}+\|d_t\widehat{\Phi}^{n+1}_{p}\|^{2}_{L^{2}(\Omega)})\big]+\frac{\gamma_5-\gamma_2}{4}\|\Psi^{n+\theta}_{\varpi}\|^{2}_{L^{2}(\Omega)}\nonumber\\
	&+\frac{1}{\gamma_5-\gamma_2}(\|\widehat{\varLambda}^{l+1}_{p}\|^{2}_{L^{2}(\Omega)}+\|\widehat{\Phi}^{l+1}_{p}\|^{2}_{L^{2}(\Omega)})+\Delta t\sum^{l}_{n=0}\big[\frac{\gamma_5-\gamma_2}{4}\|\Psi^{n+\theta}_{\varpi}\|^{2}_{L^{2}(\Omega)}\nonumber\\
	&+\frac{1}{\gamma_5-\gamma_2}(\|d_t\widehat{\varLambda}^{n+1}_{p}\|^{2}_{L^{2}(\Omega)}+\|d_t\widehat{\Phi}^{n+1}_{p}\|^{2}_{L^{2}(\Omega)})\big],	
\end{align}
and
\begin{align}
	&\Delta t\sum_{n=0}^{l}(d_{t}\Psi^{n+\theta}_{\varsigma},\widehat{\varLambda}^{n+1}_{T}-\widehat{\Phi}^{n+1}_{T})\nonumber\\
	&\leq (\Delta t)^{2}\sum^{l}_{n=0}\big[\frac{\gamma_3-\gamma_2}{2}\|d_t\Psi^{n+\theta}_{\varsigma}\|^{2}_{L^{2}(\Omega)}+\frac{1}{2(\gamma_3-\gamma_2)}(\|d_t\widehat{\varLambda}^{n+1}_{T}\|^{2}_{L^{2}(\Omega)}\nonumber\\
	&+\|d_t\widehat{\Phi}^{n+1}_{T}\|^{2}_{L^{2}(\Omega)})\big]+\frac{\gamma_3-\gamma_2}{4}\|\Psi^{l+\theta}_{\varsigma}\|^{2}_{L^{2}(\Omega)}\nonumber\\
	&
	+\frac{1}{\gamma_3-\gamma_2}(\|\widehat{\varLambda}^{l+1}_{T}\|^{2}_{L^{2}(\Omega)}+\|\widehat{\Phi}^{l+1}_{T}\|^{2}_{L^{2}(\Omega)})+\Delta t\sum^{l}_{n=0}\big[\frac{\gamma_3-\gamma_2}{4}\|\Psi^{n+\theta}_{\varsigma}\|^{2}_{L^{2}(\Omega)}\nonumber\\
	&+\frac{1}{\gamma_3-\gamma_2}(\|d_t\widehat{\varLambda}^{n+1}_{T}\|^{2}_{L^{2}(\Omega)}+\|d_t\widehat{\Phi}^{n+1}_{T}\|^{2}_{L^{2}(\Omega)})\big].	
\end{align}
To bound the forth term on the right-hand side of (\ref{3-72}), we firstly use the summation by parts formula and $d_{t}\varpi_{h}(t_{0})=0$ and $d_{t}\varsigma_{h}(t_{0})=0$ to get
\begin{alignat}{2}\label{eq-3-82}
	\sum_{n=0}^{l}(d^{2}_{t}\varpi(t_{n+1}),\Psi^{n+1}_{\tau})=&\frac{1}{\Delta t}(d_{t}\varpi(t_{l+1}),\Psi^{l+1}_{\tau})-\sum^{l}_{n=1}(d_{t}\varpi(t_{n}),d_{t}\Psi^{n+1}_{\tau}),\\
	\sum_{n=0}^{l}(d^{2}_{t}\varsigma(t_{n+1}),\Psi^{n+1}_{\tau})=&\frac{1}{\Delta t}(d_{t}\varsigma(t_{l+1}),\Psi^{l+1}_{\tau})-\sum^{l}_{n=1}(d_{t}\varsigma(t_{n}),d_{t}\Psi^{n+1}_{\tau}).\label{eq-3-82-2}
\end{alignat}
Now, we bound each term on the right-hand side of (\ref{eq-3-82}) and (\ref{eq-3-82-2}) as 
\begin{eqnarray}
	&&\frac{1}{\Delta t}(d_{t}\varpi(t_{l+1}),\Psi^{l+1}_{\tau})\leq\frac{1}{\Delta t}\|d_{t}\varpi(t_{l+1})\|_{L^{2}(\Omega)}\|\Psi^{l+1}_{\tau}\|_{L^{2}(\Omega)}\nonumber\\
	&&\leq\frac{2\gamma_{4}}{\gamma_{6}}\|\varpi_{t}\|^{2}_{L^{2}(t_{l},t_{l+1};L^{2}(\Omega))}
	+\frac{\gamma_{6}}{8\gamma_{4}(\Delta t)^{2}}\|\Psi^{l+1}_{\tau}\|^{2}_{L^{2}(\Omega)},
\end{eqnarray}

\begin{eqnarray}
	\frac{1}{\Delta t}(d_{t}\varsigma(t_{l+1}),\Psi^{l+1}_{\tau}) \leq\frac{2\gamma_{1}}{\gamma_{6}}\|\varsigma_{t}\|^{2}_{L^{2}(t_{l},t_{l+1};L^{2}(\Omega))}+\frac{\gamma_{6}}{8\gamma_{1}(\Delta t)^{2}}\|\Psi^{l+1}_{\tau}\|^{2}_{L^{2}(\Omega)}.
\end{eqnarray}
\begin{eqnarray}
	&&\sum^{l}_{n=1}(d_{t}\varpi(t_{n}),d_{t}\Psi^{n+1}_{\tau})\leq\sum_{n=1}^{l}\|d_{t}\varpi(t_{n})\|_{L^{2}(\Omega)}\|d_{t}\Psi^{n+1}_{\tau}\|_{L^{2}(\Omega)}\nonumber\\
	&&\leq \sum_{n=1}^{l}\big(\frac{\gamma_{4}}{\gamma_{6}}\|d_{t}\varpi(t_{n})\|^{2}_{L^{2}(\Omega)}+\frac{\gamma_{6}}{4\gamma_{4}}\|d_{t}\Psi^{n+1}_{\tau}\|^{2}_{L^{2}(\Omega)}\big)\nonumber\\
	&&\leq\frac{\gamma_{4}}{\gamma_{6}}\|\varpi_{t}\|^{2}_{L^{2}(0,t_f;L^{2}(\Omega))}+\sum_{n=1}^{l}\frac{\gamma_{6}}{4\gamma_{4}}\|d_{t}\Psi^{n+1}_{\tau}\|^{2}_{L^{2}(\Omega)},
\end{eqnarray}

\begin{eqnarray}
	\sum^{l}_{n=1}(d_{t}\varsigma(t_{n}),d_{t}\Psi^{n+1}_{\tau})
	\leq\frac{\gamma_{1}}{\gamma_{6}}\|\varsigma_{t}\|^{2}_{L^{2}(0,t_f;L^{2}(\Omega))}+\sum_{n=1}^{l}\frac{\gamma_{6}}{4\gamma_{1}}\|d_{t}\Psi^{n+1}_{\tau}\|^{2}_{L^{2}(\Omega)}.
\end{eqnarray}
The fifth term of the right-hand side of (\ref{3-72}) can be bounded by
\begin{align}
	&\sum_{n=0}^{l}\gamma_{4}(k(\tau_{h}^{n+1})\nabla\varPi^{n}_{p},d_{t}\nabla \widehat{\varPi}^{n+1}_{\tau})\nonumber\\
	&\leq \sum_{n=0}^{l}\frac{\gamma_{4}k_{M}}{h}\|d_{t}\varPi^{n+1}_{\tau}\|_{L^{2}(\Omega)}\|\nabla\widehat{\varPi}^{n+1}_{p}\|_{L^{2}(\Omega)}\nonumber\\
	&\leq\frac{\gamma_{4}k_{M}}{h\beta_{1}}\sum_{n=0}^{l}\sup_{\textbf{v}_{h}\in \textbf{V}_{h}}\frac{\mu(d_{t}\nabla(\varPi_{\textbf{u}}^{n+1}),\nabla(\textbf{v}_{h}))-(d_{t}\varLambda_{\tau}^{n+1},\nabla\cdot\textbf{v}_{h})}{\|\nabla\textbf{v}_{h}\|_{L^{2}(\Omega)}}\|\nabla\widehat{\varPi}^{n+1}_{p}\|_{L^{2}(\Omega)}\nonumber\\
	&\leq\sum_{n=0}^{l}\Big[\frac{\mu}{4}\|d_{t}\nabla(\varPi_{\textbf{u}}^{n+1})\|^{2}_{L^{2}(\Omega)}+\frac{2k^{2}_{4}k^{2}_{M}}{\gamma_{m}h^{2}\beta^{2}_{1}-8k^{2}_{4}k^{2}_{M}\mu}\|d_{t}\varLambda_{\tau}^{n+1}\|^{2}_{L^{2}(\Omega)}\nonumber\\
	&+\frac{k_{M}}{8}\|\nabla\widehat{\varPi}^{n+1}_{p}\|^{2}_{L^{2}(\Omega)}\Big],
\end{align}
and
\begin{alignat}{2}\label{eq-3-91}
	\sum_{n=0}^{l}&\gamma_{1}(\bm{\Theta}\nabla\widehat{\varPi}^{n}_{T}, d_{t}\nabla \varPi^{n+1}_{\tau})\nonumber\\
	&\leq\sum_{n=0}^{l}\Big[\frac{\mu}{4}\|d_{t}\nabla(\varPi_{\textbf{u}}^{n+1})\|^{2}_{L^{2}(\Omega)}+\frac{2k^{2}_{1}\theta^{2}_{M}}{\theta_{m}h^{2}\beta^{2}_{1}-8k^{2}_{1}\theta^{2}_{M}\mu}\|d_{t}\varLambda_{\tau}^{n+1}\|^{2}_{L^{2}(\Omega)}\nonumber\\
	&+\frac{\theta_{m}}{8}\|\nabla\widehat{\varPi}^{n}_{T}\|^{2}_{L^{2}(\Omega)}\Big].
\end{alignat}
Substituting \eqref{3-74}--\eqref{eq-3-91} and using the discrete Gronwall lemma, we have 
\begin{align}
	&\mu\left\|\nabla\left(\Pi_{\mathbf{u}}^{l+1}\right)\right\|_{L^{2}(\Omega)}^{2}+\gamma_{6}\left\|\Psi_{\tau}^{l+1}\right\|_{L^{2}(\Omega)}^{2}+\left(\gamma_{5}-\gamma_{2}\right)\left\|\Psi_{\varpi}^{l+\theta}\right\|_{L^{2}(\Omega)}^{2}+\left(\gamma_{3}-\gamma_{2}\right)\left\|\Psi_{\varsigma}^{l+\theta}\right\|_{L^{2}(\Omega)}^{2} \no\\
	&\qquad+\Delta t \sum_{n=0}^{l}\left(\left\|k \nabla \widehat{\Pi}_{p}^{n+1}\right\|_{L^{2}(\Omega)}^{2}+\left\|\boldsymbol{\Theta} \nabla \widehat{\Pi}_{T}^{n+1}\right\|_{L^{2}(\Omega)}^{2}\right) \no\\
	&\qquad \leq \frac{2 \Delta t}{3 k_{m}}\left\|\varpi_{t t}\right\|_{L^{2}\left((0,\tau) ; H^{1}(\Omega)^{\prime}\right)}^{2}+\frac{2 \Delta t}{3 \theta_{m}}\left\|\varsigma_{t t}\right\|_{L^{2}\left((0,\tau) ; H^{1}(\Omega)^{\prime}\right)}^{2} \no\\
	&\qquad+(\Delta t)^{2}\left(\frac{2 \gamma_{4}^{2}}{\gamma_{6}}\left\|\varpi_{t}\right\|_{L^{2}\left((0,\tau) ; L^{2}(\Omega)\right)}^{2}+\frac{2 \gamma_{1}^{2}}{\gamma_{6}}\left\|\varsigma_{t}\right\|_{L^{2}\left((0,\tau) ; L^{2}(\Omega)\right)}^{2}\right) \no\\
	&\qquad+C\left[\Norm{\varLambda^{n+1}_{\tau}}{L^{2}(\Omega)}^2+\left\|\Phi_{\tau}^{l+1}\right\|_{L^{2}(\Omega)}^{2}+\left\|\widehat{\Lambda}_{p}^{l+1}\right\|_{L^{2}(\Omega)}^{2}+\left\|\widehat{\Phi}_{p}^{l+1}\right\|_{L^{2}(\Omega)}^{2}+\left\|\widehat{\Lambda}_{T}^{l+1}\right\|_{L^{2}(\Omega)}^{2}+\left\|\widehat{\Phi}_{T}^{l+1}\right\|_{L^{2}(\Omega)}^{2}\right. \no\\
	&\qquad+\Delta t \sum_{n=1}^{l}\left(\left\|d_{t} \Lambda_{\tau}^{n+1}\right\|_{L^{2}(\Omega)}^{2}+\left\|d_{t} \Phi_{\tau}^{n+1}\right\|_{L^{2}(\Omega)}^{2}\right)+\Delta t \sum_{n=0}^{l}\left\|\operatorname{div} d_{t} \Lambda_{\mathbf{u}}^{n+1}\right\|_{L^{2}(\Omega)}^{2}\no\\
	&\qquad+\Delta t \sum_{n=0}^{l}\left(\left\|\widehat{\Lambda}_{p}^{n+1}\right\|_{L^{2}(\Omega)}^{2}+\left\|\widehat{\Phi}_{p}^{n+1}\right\|_{L^{2}(\Omega)}^{2}+\left\|\widehat{\Lambda}_{T}^{n+1}\right\|_{L^{2}(\Omega)}^{2}+\left\|\widehat{\Phi}_{T}^{n+1}\right\|_{L^{2}(\Omega)}^{2}\right) \no\\
	&\qquad\left.+(\Delta t)^{2} \sum_{n=0}^{l}\left(\left\|\widehat{\Lambda}_{p}^{n+1}\right\|_{L^{2}(\Omega)}^{2}+\left\|\widehat{\Phi}_{p}^{n+1}\right\|_{L^{2}(\Omega)}^{2}+\left\|\widehat{\Lambda}_{T}^{n+1}\right\|_{L^{2}(\Omega)}^{2}+\left\|\widehat{\Phi}_{T}^{n+1}\right\|_{L^{2}(\Omega)}^{2}\right)\right],
\end{align}
provided that $\Delta t\leq \frac{C\mu_f\beta_1^{2}h^{2}}{2c_1^{2}(\gamma_1^{2}k_M+\gamma_{4}^2 \theta_m)\mu} $ when $ \theta=0 $, but it holds for all $\Delta t>0$ when $\theta=1$. Hence \eqref{3.73} follows from the approximation properties of the projection operators $Qh, Rh
, Sh$ and inverse inequality \eqref{3.4}. The proof is complete.
\end{proof}

We conclude this section by stating the main theorem as follows.
\begin{theorem}\label{th-3-6}
	Under the assumption of Theorem \ref{th-3-5}, the solution of the MFEA($P_{k}-P_{j}-P_{j}-P_{j}$) satisfies the following error estimates
	\begin{align}
		&\max_{0\leq n\leq l}[\sqrt{\mu}h^{j-k+1}\|\nabla(\bm{{\rm u}}(t_{n})-\bm{{\rm u}}^{n}_{h})\|_{L^{2}(\Omega)}+\sqrt{\gamma_{6}}\|\tau(t_{n})-\tau^{n}_{h}\|_{L^{2}(\Omega)}\nonumber\\
		&+\sqrt{\gamma_{5}-\gamma_{2}}\|\varpi(t_{n})-\varpi^{n}_{h}\|_{L^{2}(\Omega)}+\sqrt{\gamma_{3}-\gamma_{2}}\|\varsigma(t_{n})-\varsigma^{n}_{h}\|_{L^{2}(\Omega)}]\nonumber\\
		&\leq\widehat{C}_{1}(t_f)\Delta t+\widehat{C}_{2}(t_f)h^{j+1},\\
		&[\Delta t\sum_{n=0}^{l}\|\nabla(p(t_{n})-p^{n}_{h})\|^2_{L^{2}(\Omega)}+\|\nabla(T(t_{n})-T^{n}_{h})\|^2_{L^{2}(\Omega)}]^{\frac{1}{2}}\nonumber\\
		&\leq \widehat{C}_{1}(t_f)\Delta t+\widehat{C}_{2}(t_f)h^{j},
	\end{align}
	provided that $\Delta t=O(h^{2})$ when $\theta=0$ and $\Delta t>0$ when $\theta=1$, where
	\begin{align*}
		\widehat{C}_{1}(t_f):&=C_{1}(t_f),\\
		\widehat{C}_{2}(t_f):&=C_{2}(t_f)+\|\tau\|_{L^{\infty}(0,t_f;H^{2}(\Omega))}+\|\varpi\|_{L^{\infty}(0,t_f;H^{2}(\Omega))}\\
		&+\|\varsigma\|_{L^{\infty}(0,t_f;H^{2}(\Omega))}+\|\nabla \bm{{\rm u}}\|_{L^{\infty}(0,t_f;H^{2}(\Omega))}.
	\end{align*}
\end{theorem}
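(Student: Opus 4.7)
The plan is to deduce Theorem~\ref{th-3-6} directly from Lemma~\ref{th-3-5} via the triangle inequality applied to the error decompositions $E^n_{\bu}=\varLambda^n_{\bu}+\varPi^n_{\bu}$, $E^n_{\tau}=\Phi^n_{\tau}+\Psi^n_{\tau}$, $E^n_{\varpi}=\Phi^n_{\varpi}+\Psi^n_{\varpi}$, $E^n_{\varsigma}=\Phi^n_{\varsigma}+\Psi^n_{\varsigma}$ introduced just before Lemma~\ref{th-3-5}, combined with the projection approximation estimates for $\mathcal{Q}_h$, $\mathcal{S}_h$, and $\mathcal{R}_h$. Since Lemma~\ref{th-3-5} already controls the purely discrete pieces $\varPi$, $\Psi$, $\widehat{\varPi}_p$, $\widehat{\varPi}_T$, the remaining task is to estimate the projection residuals $\varLambda$ and $\Phi$ and then reassemble the pieces with the correct scaling factors.

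For the first estimate, I would bound each component separately. By the triangle inequality,
\begin{align*}
\sqrt{\mu}\,h^{j-k+1}\|\nabla E^n_{\bu}\|_{L^2(\Omega)}
\le \sqrt{\mu}\,h^{j-k+1}\bigl(\|\nabla \varLambda^n_{\bu}\|_{L^2(\Omega)}+\|\nabla \varPi^n_{\bu}\|_{L^2(\Omega)}\bigr),
\end{align*}
and similarly for the $\tau$, $\varpi$, $\varsigma$ components weighted by $\sqrt{\gamma_6}$, $\sqrt{\gamma_5-\gamma_2}$, $\sqrt{\gamma_3-\gamma_2}$. The discrete pieces are dominated by $C_1(t_f)\Delta t+C_2(t_f)h^{j+1}$ through \eqref{3.73}. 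For the projection pieces, the estimate for $\mathcal{R}_h$ yields $\|\nabla \varLambda^n_{\bu}\|_{L^2}\le C h^{k}\|\bu(t_n)\|_{H^{k+1}}$, so the prefactor $h^{j-k+1}$ produces an overall $h^{j+1}$ rate; the estimates for $\mathcal{Q}_h$ give $\|\Phi^n_{\tau}\|_{L^2}$, $\|\Phi^n_{\varpi}\|_{L^2}$, $\|\Phi^n_{\varsigma}\|_{L^2}\le Ch^{j+1}$ times the corresponding $H^{j+1}$ norm of the exact variable. Taking the maximum over $0\le n\le l$ and absorbing the Sobolev norms of $\bu,\tau,\varpi,\varsigma$ into the constant yields the first inequality with $\widehat{C}_2(t_f)=C_2(t_f)+\|\tau\|_{L^\infty(H^2)}+\|\varpi\|_{L^\infty(H^2)}+\|\varsigma\|_{L^\infty(H^2)}+\|\nabla\bu\|_{L^\infty(H^2)}$.

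For the second (gradient) estimate on $p$ and $T$, I would use the decompositions $E^n_p=\varLambda^n_p+\varPi^n_p$ with $\varPi^{n+1}_p=\widehat{\varPi}^{n+1}_p=\gamma_4\varPi^{n+1}_{\tau}+\gamma_5\varPi^{n+\theta}_{\varpi}+\gamma_2\varPi^{n+\theta}_{\varsigma}$ (and analogously for $T$). Then
\begin{align*}
\Delta t\sum_{n=0}^{l}\|\nabla E^{n+1}_p\|^2_{L^2(\Omega)}
\le 2\Delta t\sum_{n=0}^{l}\bigl(\|\nabla\varLambda^{n+1}_p\|^2_{L^2(\Omega)}+\|\nabla\widehat{\varPi}^{n+1}_p\|^2_{L^2(\Omega)}\bigr),
\end{align*}
and likewise for $T$. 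The sum of $\|\nabla\widehat{\varPi}^{n+1}_p\|^2$ and $\|\nabla\widehat{\varPi}^{n+1}_T\|^2$ is bounded by $\bigl(C_1(t_f)\Delta t+C_2(t_f)h^{j}\bigr)^2$ (one order lower than in the $L^2$ estimate because of the $\nabla$), while the elliptic-projection estimate for $\mathcal{S}_h$ gives $\|\nabla\varLambda^{n+1}_p\|_{L^2}\le Ch^{j}\|p(t_{n+1})\|_{H^{j+1}}$ and similarly for $T$. Taking square roots delivers the second inequality.

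The main obstacle I anticipate is bookkeeping rather than analysis: one must check that the weighting $\sqrt{\mu}\,h^{j-k+1}$ correctly converts the natural $h^k$ rate produced by $\mathcal{R}_h$ into the uniform $h^{j+1}$ rate, and that the $\Delta t=O(h^2)$ restriction inherited from Lemma~\ref{th-3-5} in the $\theta=0$ case does not worsen the time-discretization constant $\widehat{C}_1(t_f)$. A secondary issue is verifying that the signs $\gamma_5-\gamma_2$ and $\gamma_3-\gamma_2$ are genuinely positive under assumption A3, so that the weights on the left-hand side make sense; this uses only the explicit formulas for the $\gamma_i$ given in \eqref{2-4} and is independent of the error analysis. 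No further PDE- or finite-element-level arguments are required, so the proof amounts to a clean assembly of Lemma~\ref{th-3-5} with the projection approximation bounds.
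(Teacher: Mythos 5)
Your proposal is correct and follows essentially the same route as the paper: the paper's proof likewise applies the triangle inequality to the decompositions $E^n=\varLambda^n+\varPi^n$ (resp. $\Phi^n+\Psi^n$), invokes the approximation properties of $\mathcal{Q}_h$, $\mathcal{S}_h$, $\mathcal{R}_h$, and appeals to Lemma \ref{th-3-5}. Your additional bookkeeping on the $h^{j-k+1}$ weighting and the positivity of $\gamma_5-\gamma_2$, $\gamma_3-\gamma_2$ is consistent with, and slightly more explicit than, what the paper records.
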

\begin{proof}
	The above estimates follow immediately from an application of the triangle inequality on
	\begin{eqnarray*}
		&&\textbf{u}(t_{n})-\textbf{u}^{n}_{h}=\varLambda^{n}_{\textbf{u}}+\varPi^{n}_{\textbf{u}},
		\quad\tau(t_{n})-\tau^{n}_{h}=\Phi^{n}_{\tau}+\Psi^{n}_{\tau},\\
		&&\tau(t_{n})-\tau^{n}_{h}=\varLambda^{n}_{\tau}+\varPi^{n}_{\tau},
		\quad\varpi(t_{n})-\varpi^{n}_{h}=\Phi^{n}_{\varpi}+\Psi^{n}_{\varpi},\\
		&&\varsigma(t_{n})-\varsigma^{n}_{h}=\Phi^{n}_{\varsigma}+\Psi^{n}_{\varsigma},
		\quad p(t_{n})-p^{n}_{h}=\Phi^{n}_{p}+\Psi^{n}_{p},\\
		&&p(t_{n})-p^{n}_{h}=\varLambda^{n}_{p}+\varPi^{n}_{p},
		\quad T(t_{n})-T^{n}_{h}=\Phi^{n}_{T}+\Psi^{n}_{T},\\
		&&T(t_{n})-T^{n}_{h}=\varLambda^{n}_{T}+\varPi^{n}_{T},
	\end{eqnarray*}
	and appealing to (\ref{eq-3-47}), (\ref{eq-3-47-2}), (\ref{eq-3-47-3}) and Theorem \ref{th-3-5}. The proof is complete.
\end{proof}
\section{ Numerical tests}\label{sec-4}		
In order to verify the effectiveness and accuracy of our proposed numerical method, we design four two-dimensional numerical experiments in this section.  In the first experiment, we use different types of finite element pairs to solve the same problem, and verify that our method has good adaptability for any finite element pair. In the second test, we solve another problem of different exact solution  using the  finite element pair ($P_2-P_1-P_1-P_1$), check whether the obtained errors and convergence orders are consistent with the theoretical results. In the third test, we consider a benchmark problem without an exact solution. In our solution results, we don't observe the locking phenomenon mentioned by Philips \cite{Wheeler2007}. At the same time, we take the numerical solution with a spatial step size of 1/180 as the exact solution, and calculate the convergence results at different step sizes, which are in complete agreement with the theoretical results. Finally, we adjust the value of the key parameter $b$ in the nonlinear term $k(\nabla \cdot \bu,p,T)$ based on the benchmark model, and find that it has a significant impact on the experimental results, indicating the importance of considering nonlinear hydraulic conduction $k$.\\

\textbf{Test 1.}\label{test1}
Let \ $\Omega= [0,1]\times[0,1]$,\ $\Gamma_{1} = \{(x,0);~0\leq x\leq1\}$,\ $~\Gamma_{2}= \{(1,y);~0\leq y\leq1 \}$,\ $\Gamma_{3}= \{(x,1);~0\leq x\leq1 \}$,\ $~\Gamma_{4}= \{ (0,y);~0\leq y\leq1 \}$, and $t_f=1,\Delta t=O(h^2)$. We consider the problem (\ref{1-1})--(\ref{1-3}) with the following source functions:
\begin{align*}
	\bm{{\rm f}} &=(\mu \pi^3+3\lambda \pi^{\frac{3}{4}}+(\alpha+\beta)\pi)(\exp(s)\cos(\pi x)\cos(\frac{\pi y}{2}))(1,1)^{'},\\
	\phi &=(a_{0}-b_{0}-\frac{3\beta \pi^2}{4}+\frac{5 \bt \pi^2}{4})(\exp(s)\sin(\pi x)\cos(\frac{\pi y}{2})),\\
	g &=(c_{0}-b_{0}-\frac{3\alpha \pi^2}{4}+\frac{5 k \pi^2}{4})(\exp(s)\sin(\pi x)\cos(\frac{\pi y}{2}))\\&+k_x(\exp(s)\cos(\pi x)\cos(\frac{\pi y}{2}))-k_y\frac{\pi}{2}(\exp(s)\sin(\pi x)\sin(\frac{\pi y}{2})),\\
	&\qquad ~~~~~~~~~~k=ak_{0}e^{-b(\frac{3}{4}(\lambda+\mu )\pi^2-\alpha-\beta)(\exp(s)\sin(\pi x)\cos(\frac{\pi y}{2})) },
\end{align*}
and the following boundary and initial conditions:
\begin{eqnarray}
	p=e^t\mathrm{sin}(\pi x)\mathrm{cos}(\frac{\pi y}{2}) \quad  &\mathrm{on} & \quad\partial\Omega_{t},\nonumber\\
	T=e^t\mathrm{sin}(\pi x)\mathrm{cos}(\frac{\pi y}{2})  \quad &\mathrm{on} & \quad\partial\Omega_{t},\nonumber\\
	u_1=\pi e^t\mathrm{cos}(\pi x)\mathrm{cos}(\frac{\pi y}{2}) \quad &\mathrm{on}& \quad\Gamma_j\times (0,t_f),~j=2,~4,\nonumber\\
	u_2=\frac{\pi}{2} e^t\mathrm{sin}(\pi x)\mathrm{sin}(\frac{\pi y}{2}) \quad &\mathrm{on} & \quad\Gamma_j\times(0,t_f),~j=1,~3,\nonumber\\
	\sigma(\pmb{u})\bm{n}-\alpha pI\bm{n}=\textbf{f}_1 \quad  &\mathrm{on}& \quad\partial\Omega_{t},\nonumber\\
	\textbf{u}(x,0)=\pi \big(\mathrm{cos}(\pi x)\mathrm{cos}(\frac{\pi y}{2}),\frac{1}{2} \mathrm{sin}(\pi x)\mathrm{sin}(\frac{\pi y}{2}) \big)^{'}\quad&\mathrm{in}& \quad\Omega,\nonumber\\
	p(x,0)=\mathrm{sin}(\pi x)\mathrm{cos}(\frac{\pi y}{2}),~T(x,0)=\mathrm{sin}(\pi x)\mathrm{cos}(\frac{\pi y}{2}) \quad&\mathrm{in}& \quad\Omega,\nonumber
\end{eqnarray}
where $\bm{{\rm f}}_{1}=e^t\mathrm{sin}(\pi x)\mathrm{cos}(\frac{\pi y}{2})((-\mu \pi^2-\frac{3}{4}\pi^{2}\lambda-(\alpha+\beta))n_1,(\frac{\pi^{2}\mu}{4}-\frac{3\pi^2\lambda}{4}-(\alpha+\beta))n_2)^{'}$.

It is easy to check that the exact solution are
\begin{align*}
	\bm{{\rm u}}=(\pi e^t\mathrm{cos}(\pi x)\mathrm{cos}(\frac{\pi y}{2}),\frac{\pi}{2} e^t\mathrm{sin}(\pi x)\mathrm{sin}(\frac{\pi y}{2}))^{'},\\
	p=e^t\mathrm{sin}(\pi x)\mathrm{cos}(\frac{\pi y}{2}),\quad T=e^t\mathrm{sin}(\pi x)\mathrm{cos}(\frac{\pi y}{2}).
\end{align*}
\begin{table}[H]
	\centering
	\caption{ Physical parameters}\label{table 5}
	\begin{tabular}{c l c }
		\hline
		Parameter   &  \quad Description      &   \quad   Value    \\
		\hline
		%$\lambda $  &\quad Lam$\acute{e}$ constant                  & \quad 1000\\
		%$\mu$       &\quad Lam$\acute{e}$ constant                  &  \quad 1000 \\
		$a_0$       &\quad Effective thermal capacity               &  \quad 2e-1\\
		$b_0$       &\quad Thermal dilation coefficient              &  \quad 1e-1 \\
		$c_0$       &\quad Constrained specific storage coefficient &  \quad 2e-1 \\
		$\alpha$    &\quad Biot-Willis constant                     &  \quad  0.01 \\
		$\beta$     &\quad Thermal stress coefficient.               &  \quad  0.01\\
		$a$         &\quad Mutation coefficient                      & \quad    1\\
	$b$         & \quad Stress coupling coefficient              & \quad    1\\
	$k_0$         &\quad Initial permeability                       &  \quad 1e-5\\
		$\bm{\Theta}$     &\quad Effective thermal conductivity           &  \quad 1e-5$I$\\
		$E$         &\quad Young's modulus                          &  \quad  2e4\\
		$\nu$       &\quad Poisson ratio                            &  \quad  0.4\\
		\hline
	\end{tabular}
\end{table}

\begin{table}[htbp]
	\vspace{-2.0em}
	\begin{center}
		\caption{Error and convergence rates of $u_h^n$, $p_h^n$, $T_h^n$ with $P_2-P_1-P_1-P_1$}\label{table 4}
		\resizebox{\textwidth}{12mm}{
			\begin{tabular}{ccccccccccccc}
				\hline
				$h$  & $\frac{\|e_u\|_{L^2(\Omega)}}{\|u\|_{L^2(\Omega)}}$  &  CR  &  $\frac{\|e_u\|_{H^1(\Omega)}}{\|u\|_{H^1(\Omega)}}$  &  CR & $\frac{\|e_p\|_{L^2(\Omega)}}{\|p\|_{L^2(\Omega)}}$ & CR  &  $\frac{\|e_p\|_{H^1(\Omega)}}{\|p\|_{H^1(\Omega)}}$  &  CR  &  $\frac{\|e_T\|_{L^2(\Omega)}}{\|T\|_{L^2(\Omega)}}$  &  CR  & $\frac{\|e_T\|_{H^1(\Omega)}}{\|T\|_{H^1(\Omega)}}$ & CR \\ 
				\hline
				$1/4$   &5.5071e-02&  &3.8972e-02&  &8.8396e-02&  &3.0042e-01&  &1.1135e-01&  &2.8577e-01&  \\
				$1/8$  &6.8062e-03&3.0164 &9.2731e-03&2.0713 &1.8020e-02&2.2944 &1.4904e-01&1.0113 &2.8869e-02&1.9475 &1.4492e-01&0.9796 \\
				$1/16$  &8.4970e-04&3.0018 &2.2529e-03&2.0412 &4.0409e-03&2.1569 &7.3713e-02&1.0157 &7.2865e-03&1.9862 &7.2720e-02&0.9948 \\
				$1/32$  &1.0622e-04&2.9999 &5.5480e-04&2.0218 &9.5594e-04&2.0797 &3.6632e-02&1.0088 &1.8260e-03&1.9965 &3.6393e-02&0.9987 \\
				\hline
		\end{tabular}}
	\end{center}
\end{table}

\begin{table}[htbp]
	\vspace{-1.0em}
	\begin{center}
		\caption{Order of convergence of time discretization of Test 1 }\label{table311}
		\resizebox{\textwidth}{12mm}{
			\begin{tabular}{ccccccccccc}
				\hline
				$\Delta t$ & $\left\|\mathbf{u}_{h}^{\Delta t}-\mathbf{u}_{h}^{\frac{1}{2} \Delta t}\right\|_{L^2(\Omega)}$ & $\rho_{\Delta t,\mathbf{u}_{h}}$ & $\left\|p_{h}^{\Delta t}-p_{h}^{ \frac{1}{2} \Delta t}\right\|_{L^2(\Omega)}$ & $\rho_{\Delta t, p_h}$ & $\left\|T_{h}^{\Delta t}-T_{h}^{\frac{1}{2} \Delta t}\right\|_{L^2(\Omega)}$ & $\rho_{\Delta t, T_h}$ \\
				\hline
				$\frac{1}{10}$ &$ 4.9287e-10$& 1.9383 &$6.7904e-02$&2.0246 &$1.3030e-04$&1.9510   \\
				$\frac{1}{20}$ &$2.5428e-10$& 1.9688&$3.3539e-02$&2.0124 &$6.6786e-05$&1.9753 \\
				$\frac{1}{40}$ &$1.2916e-10$& 1.9843&$1.6666e-02$& 2.0062&$3.3811e-05$&1.9876 \\
				$\frac{1}{80}$ &$6.5089e-11$&  &$8.3073e-03$&  &$1.7011e-05$&   \\
				\hline
		\end{tabular}}
	\end{center}
\end{table}
\begin{figure}[htbp]
	\centering
	\subfigure[]{
		%\begin{minipage}[t]{0.5\linewidth}
		\centering
		\includegraphics[width=2.5in]{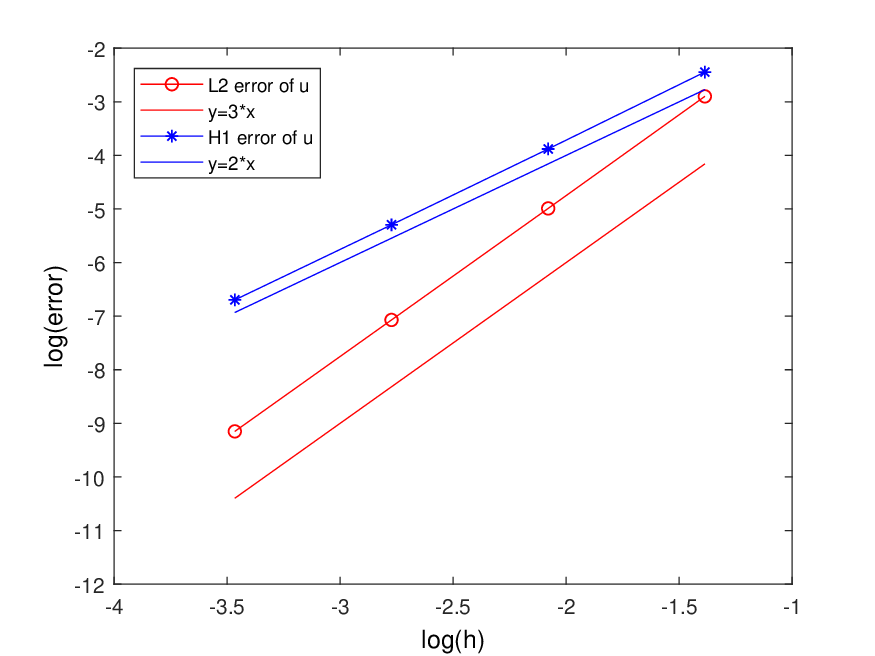}
		%\caption{Surface plot of the computed pressure $p_h^n$ at the terminal time $W$.}
		\label{fig1}
		%\end{minipage}%
	}
	\subfigure[]{
		%\begin{minipage}[t]{0.5\linewidth}
		\centering
		\includegraphics[width=2.5in]{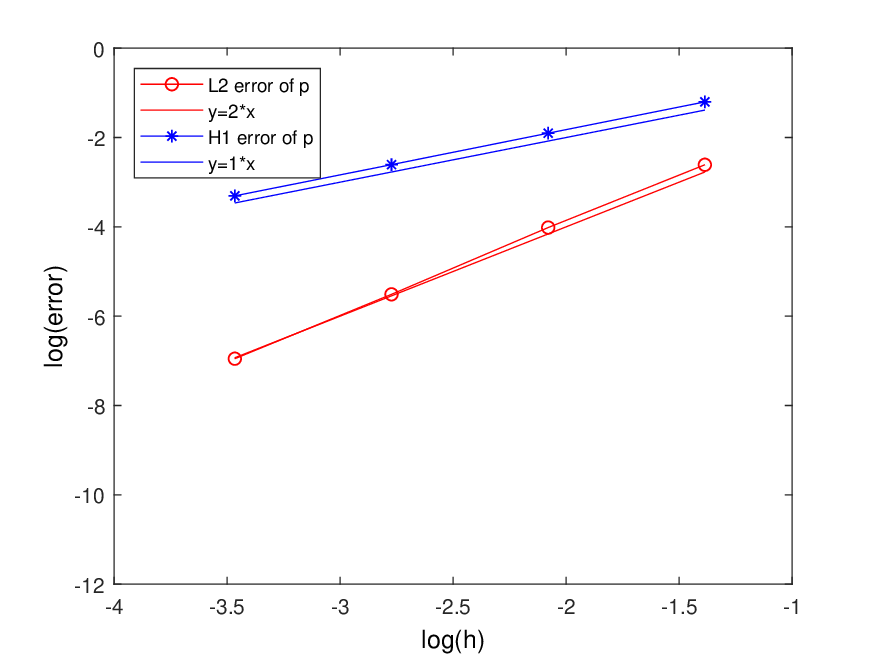}
		%\caption{Surface plot of the computed temperature $T_h^n$ at the terminal time $W$.}
		\label{fig2}
		%\end{minipage}
	}
	\subfigure[]{
	%\begin{minipage}[t]{0.5\linewidth}
	\centering
	\includegraphics[width=2.5in]{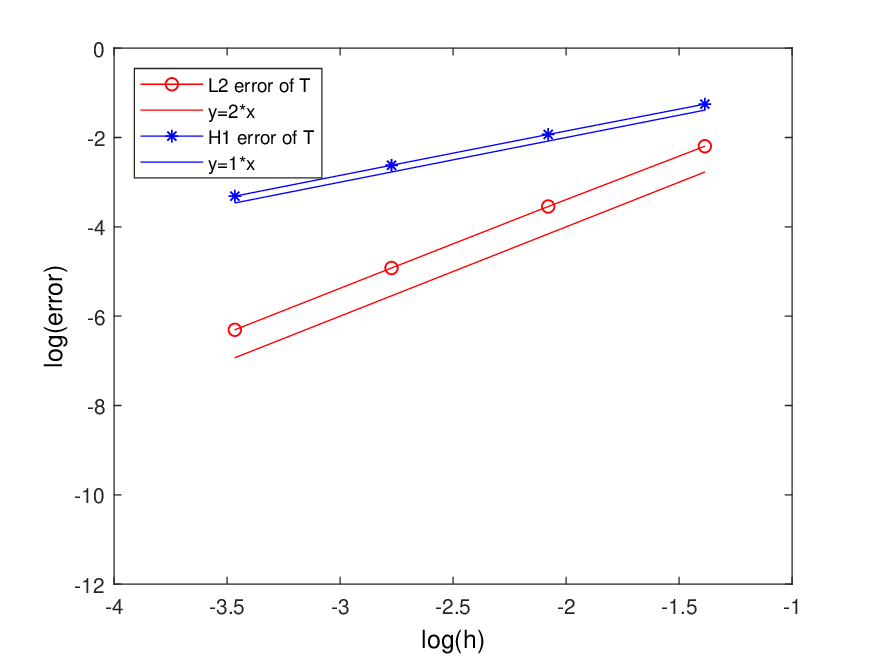}
	%\caption{Surface plot of the computed temperature $T_h^n$ at the terminal time $W$.}
	\label{fig221}
	%\end{minipage}
}
	\subfigure[]{
	%\begin{minipage}[t]{0.5\linewidth}
	\centering
	\includegraphics[width=2.5in]{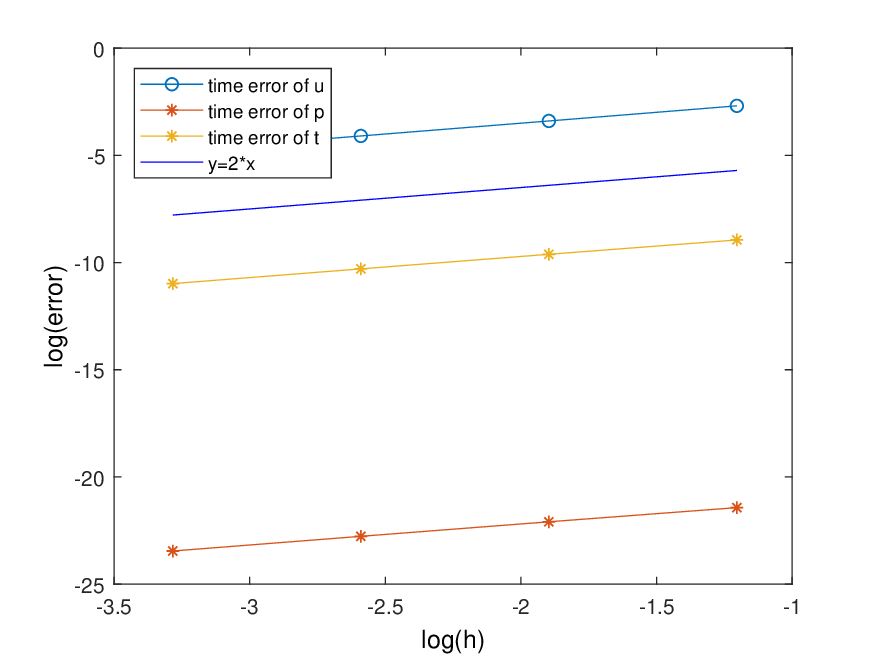}
	%\caption{Surface plot of the computed temperature $T_h^n$ at the terminal time $W$.}
	\label{fig222}
	%\end{minipage}
}
	\caption{(a) spatial convergence order for $u^{n}_{h}$, (b) spatial convergence rate for $p^{n}_{h}$,
	\\~~~~~~~~\quad \qquad(c) spatial convergence rate for $ T^{n}_{h}$,(d) time convergence order for $u^{n}_{h},p^{n}_{h},T^{n}_{h}$ }
\end{figure}
\begin{figure}[htbp]
	\centering
	\subfigure[]{
		%\begin{minipage}[t]{0.5\linewidth}
		\centering
		\includegraphics[width=2.5in]{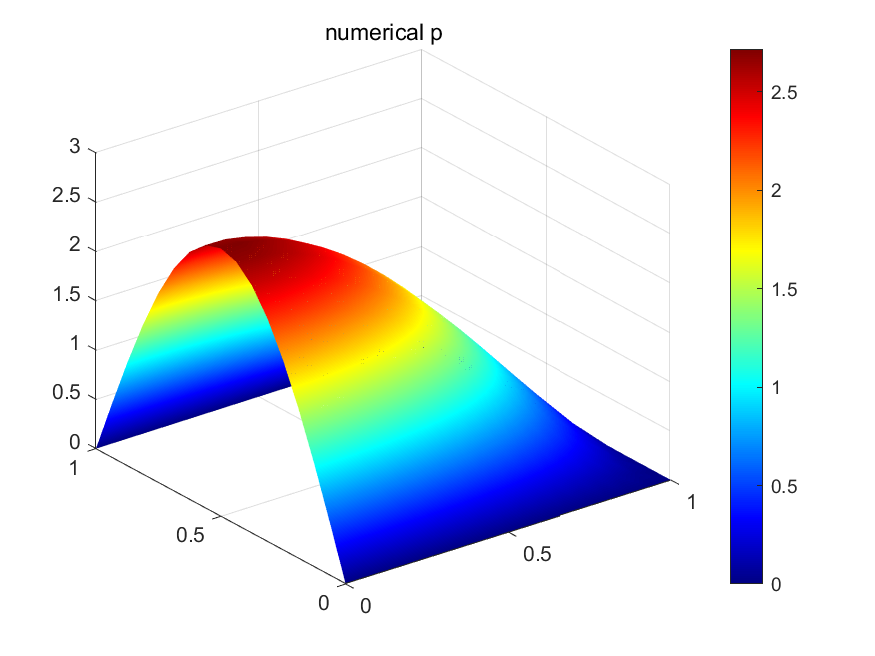}
		%\caption{Surface plot of the computed pressure $p_h^n$ at the terminal time $W$.}
		\label{fig51}
		%\end{minipage}%
	}
	\subfigure[]{
		%\begin{minipage}[t]{0.5\linewidth}
		\centering
		\includegraphics[width=2.5in]{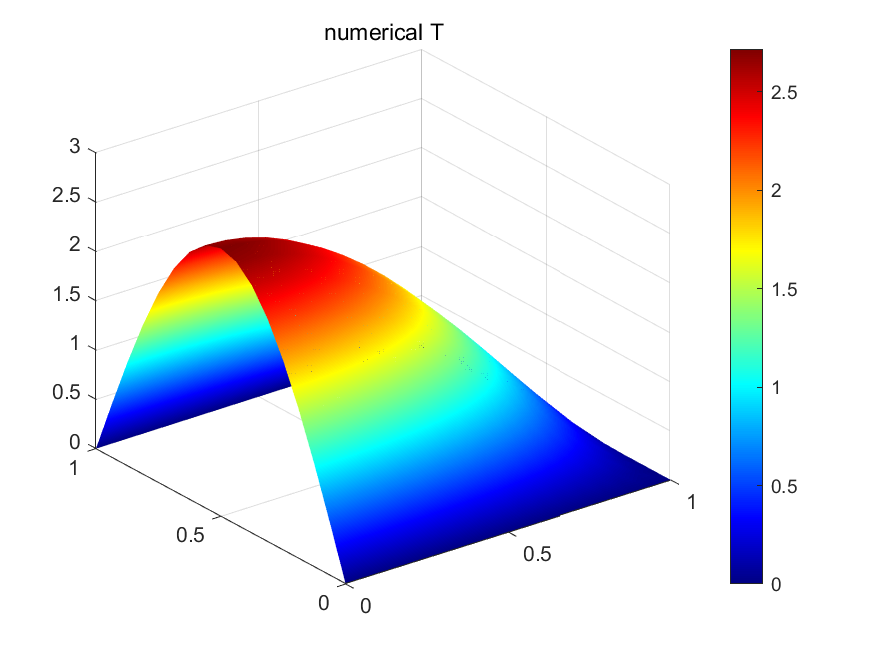}
		%\caption{Surface plot of the computed temperature $T_h^n$ at the terminal time $W$.}
		\label{fig61}
		%\end{minipage}
	}
	\caption{(a) surface plot of $p_h^n$ at the terminal time $t_f$, (b) surface plot of $T_h^n$ at the terminal time $t_f$.}
\end{figure}

\begin{figure}
	\centering
	\includegraphics[height=5cm,width=7cm]{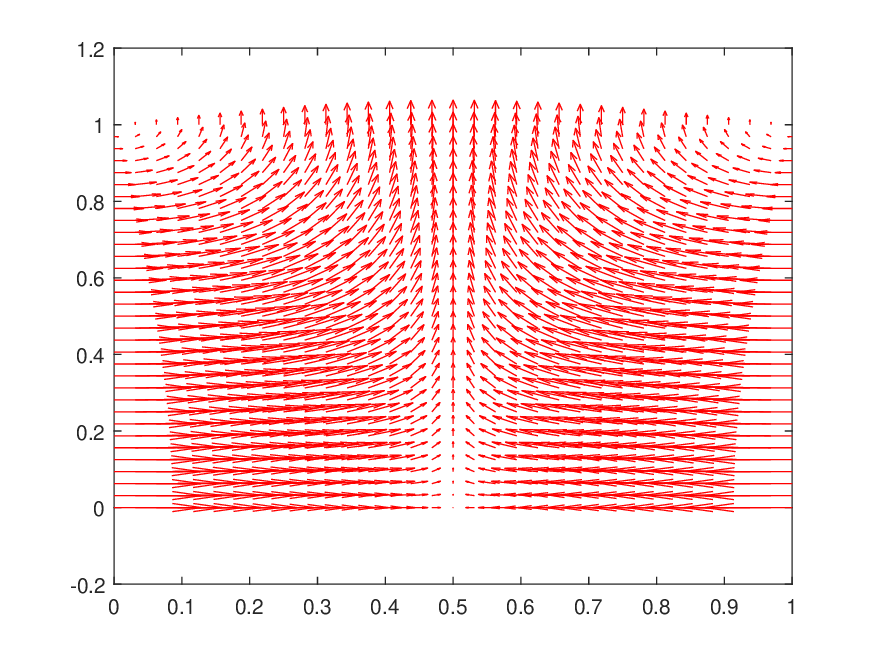}
	\caption{Arrow plot of the displacement $\textbf{u}_h^n$.}
	\label{fig71}
\end{figure}
\begin{figure}[H]
	\centering
	\subfigure[]{
		%\begin{minipage}[t]{0.5\linewidth}
		\centering
		\includegraphics[width=2.5in]{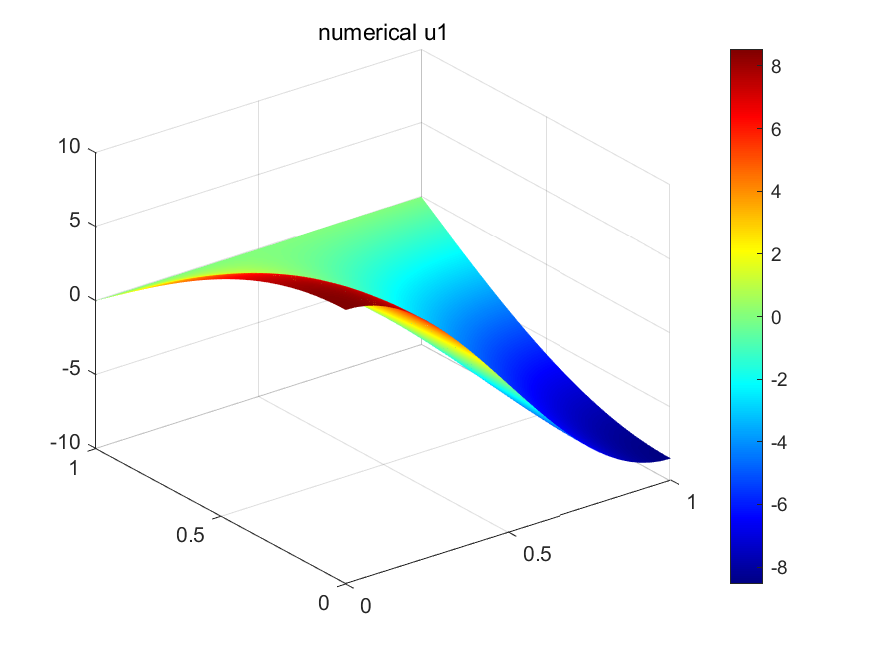}
		%\caption{fig1-000}
		%	\end{minipage}%
	\label{fig81}}%
\subfigure[]{
	%\begin{minipage}[t]{0.5\linewidth}
	\centering
	\includegraphics[width=2.5in]{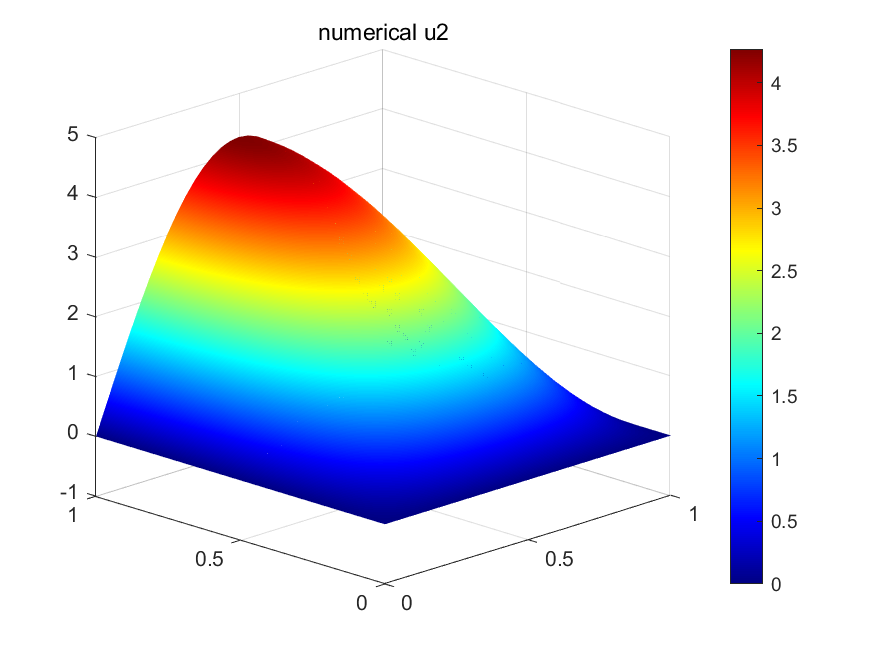}
	%\caption{fig2-000}
	%	\end{minipage}%
\label{fig91}}%
\centering
\caption{(a) surface plot of $u_{1h}^{n}$ at the terminal time $t_f$, (b) surface plot of $u_{2h}^{n}$ at the terminal time $t_f$.}
\vspace{-2em}
\end{figure}

Table \ref{table2} displays the $L^{\infty}(0,t_f;L^{2}(\Omega))$-norm error and $L^{\infty}(0,t_f;H^{1}(\Omega))$-norm error of $u$, $p$, $T$ and the convergence order with respect to $h$ at terminal time $s$. Evidently, the spatial rates of convergence are consistent with Theorem \ref{th-3-6}.
Figure \ref{fig1} and Figure \ref{fig2} describe the spatial convergence order of $u^{n}_{h}, p^{n}_{h}$, $T^{n}_{h}$. Figure \ref{fig51}, Figure \ref{fig61}, Figure \ref{fig81} and Figure \ref{fig91} show, respectively, the surface plot of $p^{n}_{h}$, $T^{n}_{h}$, $u^{n}  _{1h}$ and $u^{n}_{2h}$ at the terminal time $s$ and Figure \ref{fig71} shows arrow plot of  $\textbf{u}_h^n$. They coincide with the theoretical results.\\

To test the results of problem under different pairs of finite elements, we calculated the four variables of the reformulated equations using $P_1-P_1-P_1-P_1$ and $P_2-P_2-P_2-P_2$ elements respectively, and observed the calculation results and convergence orders. The numerical solutions are shown in the following two diagrams.
\begin{table}[H]
	\vspace{-1.8em}
	\begin{center}
		\caption{Error and convergence rates of $u_h^n$, $p_h^n$, $T_h^n$ with $P_1-P_1-P_1-P_1$}\label{table 44}
		\resizebox{\textwidth}{12mm}{
			\begin{tabular}{ccccccccccccc}
				\hline
				$h$  & $\frac{\|e_u\|_{L^2(\Omega)}}{\|u\|_{L^2(\Omega)}}$  &  CR  &  $\frac{\|e_u\|_{H^1(\Omega)}}{\|u\|_{H^1(\Omega)}}$  &  CR & $\frac{\|e_p\|_{L^2(\Omega)}}{\|p\|_{L^2(\Omega)}}$ & CR  &  $\frac{\|e_p\|_{H^1(\Omega)}}{\|p\|_{H^1(\Omega)}}$  &  CR  &  $\frac{\|e_T\|_{L^2(\Omega)}}{\|T\|_{L^2(\Omega)}}$  &  CR  & $\frac{\|e_T\|_{H^1(\Omega)}}{\|T\|_{H^1(\Omega)}}$ & CR \\ 
				\hline
				$1/4$   &4.3904e-01& &2.8655e-01& &2.9446e-01& &3.9292e-01& &1.1156e-01& &2.8577e-01&  \\
				$1/8$  &1.2237e-01&1.8431 &1.4521e-01&0.9806 &7.9041e-02&1.8974 &1.7515e-01&1.1657 &2.8937e-02&1.9468 &1.4492e-01&0.9796  \\
				$1/16$  &3.1686e-02&1.9493 &7.2770e-02&0.9967 &2.0516e-02&1.9458 &8.1700e-02&1.1002 &7.3050e-03&1.9859 &7.2720e-02&0.9948  \\
				$1/32$  &8.0000e-03&1.9858 &3.6400e-02&0.9994 &5.2262e-03&1.9729 &3.8933e-02&1.0694 &1.8308e-03&1.9964 &3.6393e-02&0.9987  \\
				\hline
		\end{tabular}}
	\end{center}
\end{table}

\begin{table}[htbp]
	\vspace{-2.0em}
	\begin{center}
		\caption{Error and convergence rates of $u_h^n$, $p_h^n$, $T_h^n$ with $P_2-P_2-P_2-P_2$}\label{table 45}
		\resizebox{\textwidth}{12mm}{
			\begin{tabular}{ccccccccccccc}
				\hline
				$h$  & $\frac{\|e_u\|_{L^2(\Omega)}}{\|u\|_{L^2(\Omega)}}$  &  CR  &  $\frac{\|e_u\|_{H^1(\Omega)}}{\|u\|_{H^1(\Omega)}}$  &  CR & $\frac{\|e_p\|_{L^2(\Omega)}}{\|p\|_{L^2(\Omega)}}$ & CR  &  $\frac{\|e_p\|_{H^1(\Omega)}}{\|p\|_{H^1(\Omega)}}$  &  CR  &  $\frac{\|e_T\|_{L^2(\Omega)}}{\|T\|_{L^2(\Omega)}}$  &  CR  & $\frac{\|e_T\|_{H^1(\Omega)}}{\|T\|_{H^1(\Omega)}}$ & CR \\ 
				\hline
				$1/4$   &2.2304e-02& &3.4153e-02& &1.4276e-01& &6.8548e-01& &5.1892e-03& &3.4220e-02&  \\
				$1/8$  &2.7857e-03&3.0012 &8.7627e-03&1.9626 &2.2594e-02&2.6596 &2.5301e-01&1.4379 &6.4878e-04&2.9997 &8.6909e-03&1.9773  \\
				$1/16$  &3.4916e-04&2.9961 &2.1944e-03&1.9976 &3.2316e-03&2.8056 &7.6273e-02&1.7299 &8.1669e-05&2.9899 &2.1819e-03&1.9939 \\
				$1/32$  &4.3841e-05&2.9935 &5.4745e-04&2.0030 &4.2936e-04&2.9120 &2.0638e-02&1.8858 &1.0490e-05&2.9608 &5.4607e-04&1.9985  \\
				\hline
		\end{tabular}}
	\end{center}
\end{table}
We found that, whether using $P_1-P_1-P_1-P_1$ or $P_2-P_2-P_2-P_2$ elements, the calculation results were consistent with the theoretical analysis. These show that the reformulation of the thermo-poroelasticity model is of great significance, as it makes the first two equations can freely choose any finite element format to solve no longer subjecting to the constraints of the saddle point problem. In this way, we can choose the most suitable finite element format according to different problems and needs, thereby improving the efficiency and accuracy of the numerical solution of the equations.

\textbf{Test 2.} \label{test3}This test problem is same as one of  \cite{Brun2020}, we take $\Omega=[0,1]\times[0,1]\subset R^2$ and prescribe the following smooth solutions for the temperature,  pressure and displacement:
\begin{equation}
	\begin{array}{l}
		T(x, t)=t x_{1}\left(1-x_{1}\right) x_{2}\left(1-x_{2}\right),\\
		p(x, t)=t x_{1}\left(1-x_{1}\right) x_{2}\left(1-x_{2}\right),\\
		\bm{{\rm u}}(x, t)=t x_{1}\left(1-x_{1}\right) x_{2}\left(1-x_{2}\right)(1,1)^{'},
	\end{array}
\end{equation}
and $\textbf{f},\phi$ and $g$ can be calculated explicitly by using (\ref{1-1})-(\ref{1-3}). The boundary and initial condition are given by
\begin{eqnarray}
	p=tx(1-x)y(1-y)  \quad &\mathrm{on} & \quad\partial\Omega_{t},\nonumber\\
	T=tx(1-x)y(1-y)  \quad&\mathrm{on} & \quad\partial\Omega_{t},\nonumber\\
	u_1=tx(1-x)y(1-y) \quad&\mathrm{on}& \quad\Gamma_j\times (0,t_f),~j=2,~4,\nonumber\\
	u_2=tx(1-x)y(1-y)  \quad&\mathrm{on} & \quad\Gamma_j\times(0,t_f),~j=1,~3,\nonumber\\
	\sigma(\pmb{t_f})\bm{n}-\alpha pI\bm{n}-\beta TI\bm{n}=\textbf{f}_1 \quad &\mathrm{on}& \quad\partial\Omega_{t},\nonumber\\
	\textbf{u}(x,0)=\bm{0},~ p(x,0)=0,~T(x,0)=0 \quad&\mathrm{in}& \quad\Omega.\nonumber
\end{eqnarray}
\begin{table}[H]
	\centering
	\caption{ Physical parameters}\label{table1}
	\begin{tabular}{c l c }
		\hline
		Parameter   &  \quad Description      &   \quad   Value    \\
		\hline
		%$\lambda $  &\quad Lam$\acute{e}$ constant                  & \quad 1000\\
		%$\mu$       &\quad Lam$\acute{e}$ constant                  &  \quad 1000 \\
		$a_0$       &\quad Effective thermal capacity               &  \quad 2e5\\
		$b_0$       &\quad Thermal dilation coefficient              &  \quad 1e5 \\
		$c_0$       &\quad Constrained specific storage coefficient &  \quad 2e5 \\
		$\alpha$    &\quad Biot-Willis constant                     &  \quad  0.01 \\
		$\beta$     &\quad Thermal stress coefficient.               &  \quad  0.01\\
		$a$         &\quad Mutation coefficient                      & \quad    1\\
		$b$         & \quad Stress coupling coefficient              & \quad    1\\
		$k_0$         &\quad Initial permeability tensor                      &  \quad $0.1I$\\
		$\bm{\Theta}$     &\quad Effective thermal conductivity           &  \quad $0.1I$\\
		$E$         &\quad Young's modulus                          &  \quad  2e7\\
		$\nu$       &\quad Poisson ratio                            &  \quad  0.4\\
		\hline
	\end{tabular}
\end{table}

\begin{table}[htbp]
	\vspace{-2.0em}
	\begin{center}
		\caption{Error and convergence rates of $u_h^n$, $p_h^n$, $T_h^n$}\label{table2}
		\resizebox{\textwidth}{12mm}{
			\begin{tabular}{ccccccccccccc}
				\hline
				$h$  & $\frac{\|e_u\|_{L^2(\Omega)}}{\|u\|_{L^2(\Omega)}}$  &  CR  &  $\frac{\|e_u\|_{H^1(\Omega)}}{\|u\|_{H^1(\Omega)}}$  &  CR & $\frac{\|e_p\|_{L^2(\Omega)}}{\|p\|_{L^2(\Omega)}}$ & CR  &  $\frac{\|e_p\|_{H^1(\Omega)}}{\|p\|_{H^1(\Omega)}}$  &  CR  &  $\frac{\|e_T\|_{L^2(\Omega)}}{\|T\|_{L^2(\Omega)}}$  &  CR  & $\frac{\|e_T\|_{H^1(\Omega)}}{\|T\|_{H^1(\Omega)}}$ & CR \\ 
				\hline
				$1/4$   &7.9244e-03&&5.4080e-02&&7.7938e-02&&4.3241e-01&&7.7926e-02&&4.3239e-01& \\
				$1/8$  &9.3526e-04&3.0829&1.3910e-02&1.9590&1.6853e-02&2.2093&2.1002e-01&1.0418&1.6824e-02&2.2116&2.0995e-01&1.0423  \\
				$1/16$  &1.1146e-04&3.0689&3.5210e-03&1.9821&3.8903e-03&2.1151&1.0373e-01&1.0178&3.8382e-03&2.1320&1.0349e-01&1.0206 \\
				$1/32$  &1.3597e-05&3.0352&8.8573e-04&1.9910&9.3103e-04&2.0630&5.1573e-02&1.0081&9.1006e-04&2.0764&5.1408e-02&1.0094 \\
				\hline
		\end{tabular}}
	\end{center}
\end{table}
\begin{table}[htbp]
	\vspace{-1.0em}
	\begin{center}
		\caption{Order of convergence of time discretization of Test 1 }\label{table211}
		\resizebox{\textwidth}{12mm}{
			\begin{tabular}{ccccccccccc}
				\hline
				$\Delta t$ & $\left\|\mathbf{u}_{h}^{\Delta t}-\mathbf{u}_{h}^{\frac{1}{2} \Delta t}\right\|_{L^2(\Omega)}$ & $\rho_{\Delta t,\mathbf{u}_{h}}$ & $\left\|p_{h}^{\Delta t}-p_{h}^{ \frac{1}{2} \Delta t}\right\|_{L^2(\Omega)}$ & $\rho_{\Delta t, p_h}$ & $\left\|T_{h}^{\Delta t}-T_{h}^{\frac{1}{2} \Delta t}\right\|_{L^2(\Omega)}$ & $\rho_{\Delta t, T_h}$ \\
				\hline
				$\frac{1}{10}$ &3.5779e-14&2.0000 & 2.5178e-10 &2.3567  &1.2589e-10&2.3567  \\
				$\frac{1}{20}$ &1.7889e-14&2.0000 & 1.0684e-10&2.2207  &5.3418e-11&2.2207  \\
				$\frac{1}{40}$ &8.9447e-15&2.0000 & 4.8108e-11&2.1203  &2.4054e-11&2.1203  \\
				$\frac{1}{80}$ &4.4724e-15&  & 2.2689e-11& &1.1345e-11&  & \\
				\hline
		\end{tabular}}
	\end{center}
\end{table}
%%%%%%%%%%%%%%%%%
\begin{figure}[htbp]
	\centering
	\subfigure[]{
		%\begin{minipage}[t]{0.5\linewidth}
		\centering
		\includegraphics[width=2.5in]{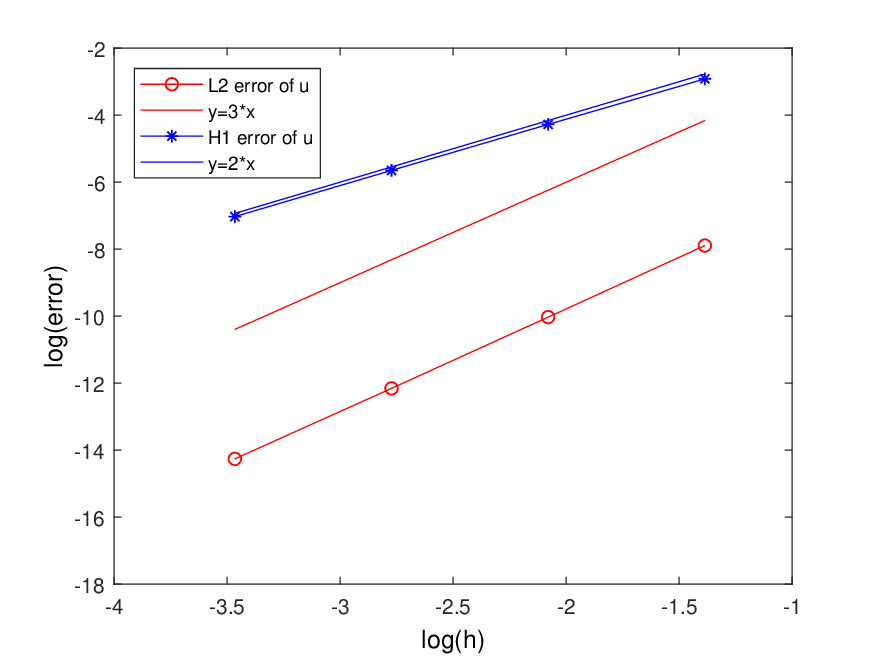}
		%\caption{Surface plot of the computed pressure $p_h^n$ at the terminal time $W$.}
		\label{fig111}
		%\end{minipage}%
	}
	\subfigure[]{
		%\begin{minipage}[t]{0.5\linewidth}
		\centering
		\includegraphics[width=2.5in]{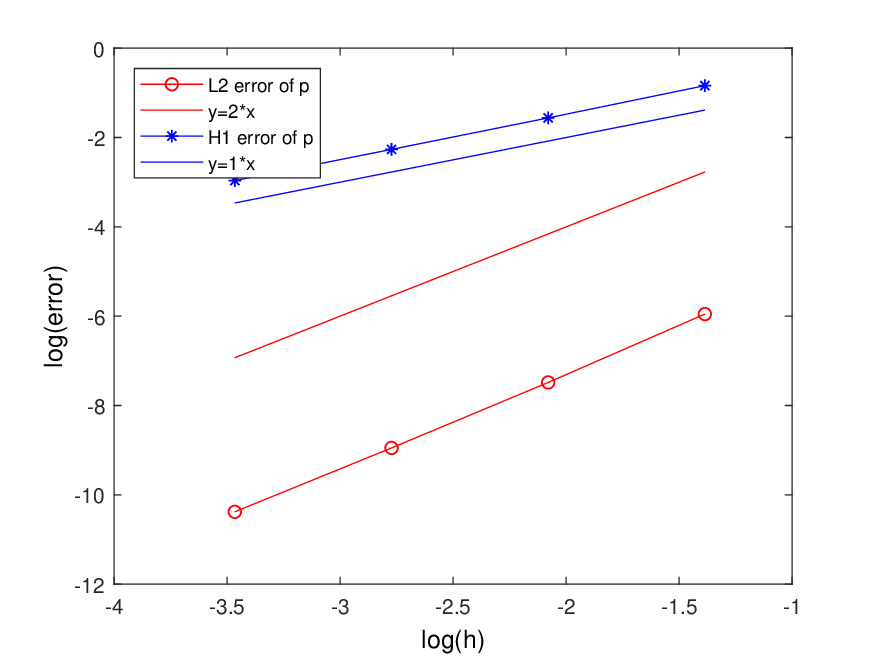}
		%\caption{Surface plot of the computed temperature $T_h^n$ at the terminal time $W$.}
		\label{fig211}
		%\end{minipage}
	}
	\subfigure[]{
		%\begin{minipage}[t]{0.5\linewidth}
		\centering
		\includegraphics[width=2.5in]{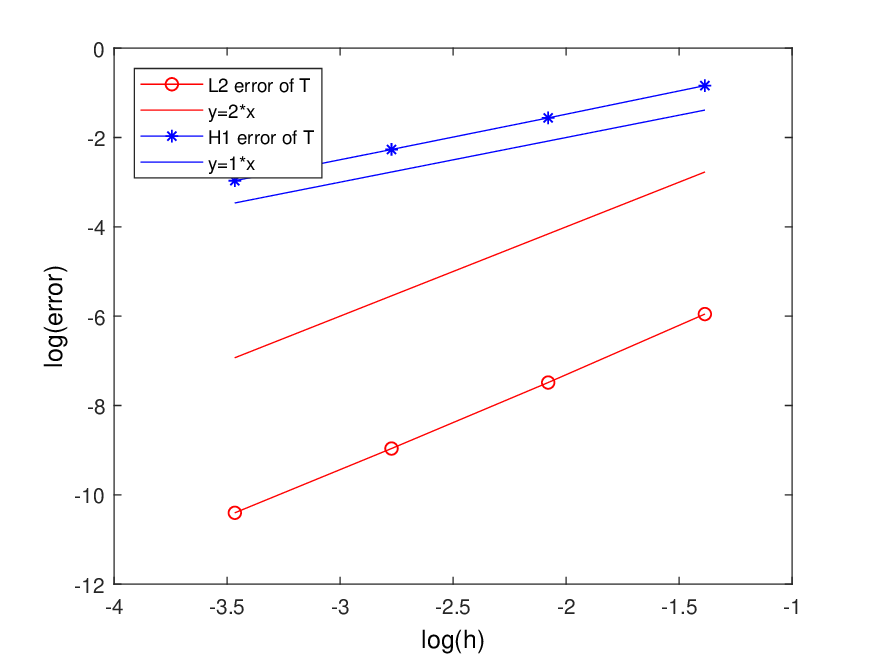}
		%\caption{Surface plot of the computed temperature $T_h^n$ at the terminal time $W$.}
		\label{fig2221}
		%\end{minipage}
	}
	\subfigure[]{
		%\begin{minipage}[t]{0.5\linewidth}
		\centering
		\includegraphics[width=2.5in]{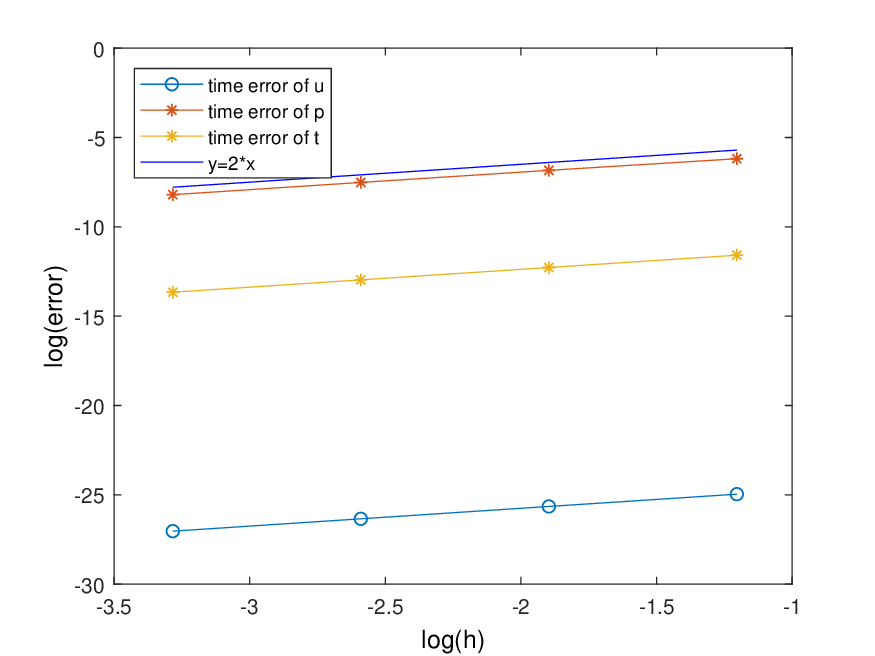}
		%\caption{Surface plot of the computed temperature $T_h^n$ at the terminal time $W$.}
		\label{fig2232}
		%\end{minipage}
	}
	\caption{(a) spatial convergence order for $u^{n}_{h}$, (b) spatial convergence rate for $p^{n}_{h}$,
		\\~~~~~~~~\quad \qquad(c) spatial convergence rate for $ T^{n}_{h}$,(d) time convergence order for $u^{n}_{h},p^{n}_{h},T^{n}_{h}$ }
\end{figure}
\begin{figure}[H]
	\centering
	\subfigure[]{
		%\begin{minipage}[t]{0.5\linewidth}
		\centering
		\includegraphics[width=2.5in]{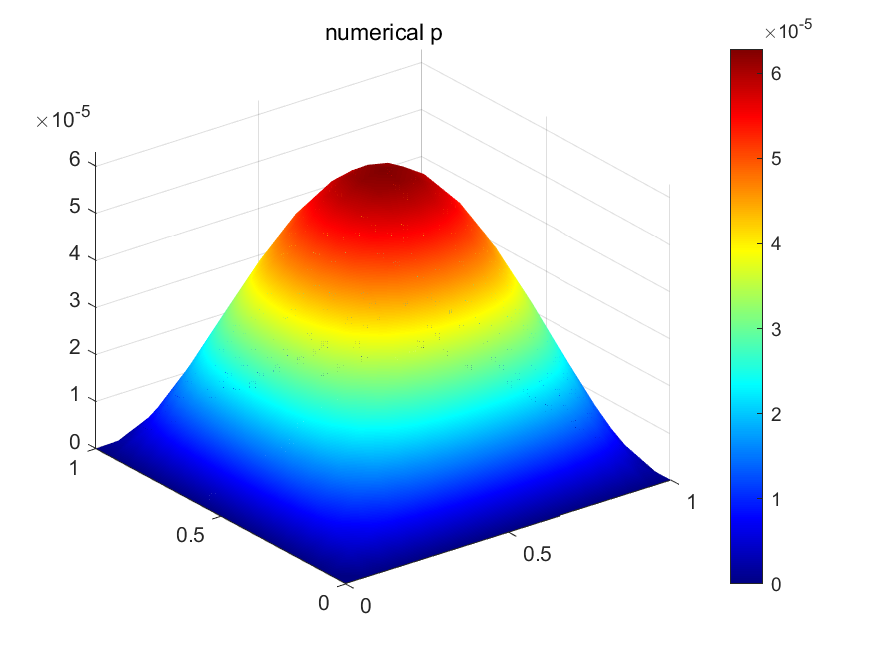}
		%\caption{Surface plot of the computed pressure $p_h^n$ at the terminal time $W$.}
		\label{fig6}
	}
	\subfigure[]{
		%\end{minipage}%
		%\begin{minipage}[t]{0.5\linewidth}
		\centering
		\includegraphics[width=2.5in]{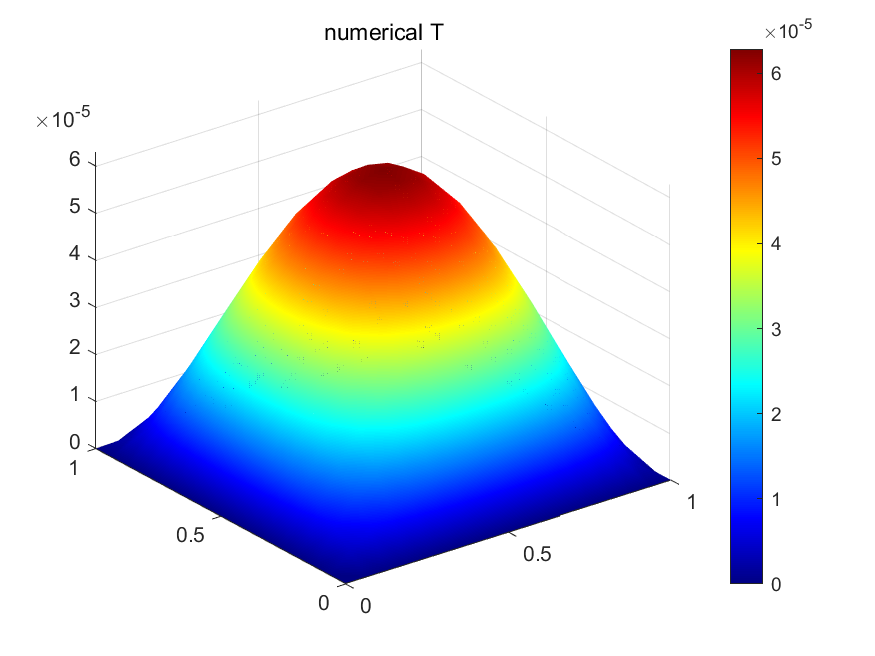}
		%\caption{Surface plot of the computed temperature $T_h^n$ at the terminal time $W$.}
		\label{fig7}
		%\end{minipage}
	}
	\caption{(a) and (b) are surface plot of the pressure $p_h^n$ and temperature $T_h^n$ at the terminal time $t_f$ respectively.}
\end{figure}
\begin{figure}[H]
	\centering
	\includegraphics[height=5cm,width=7cm]{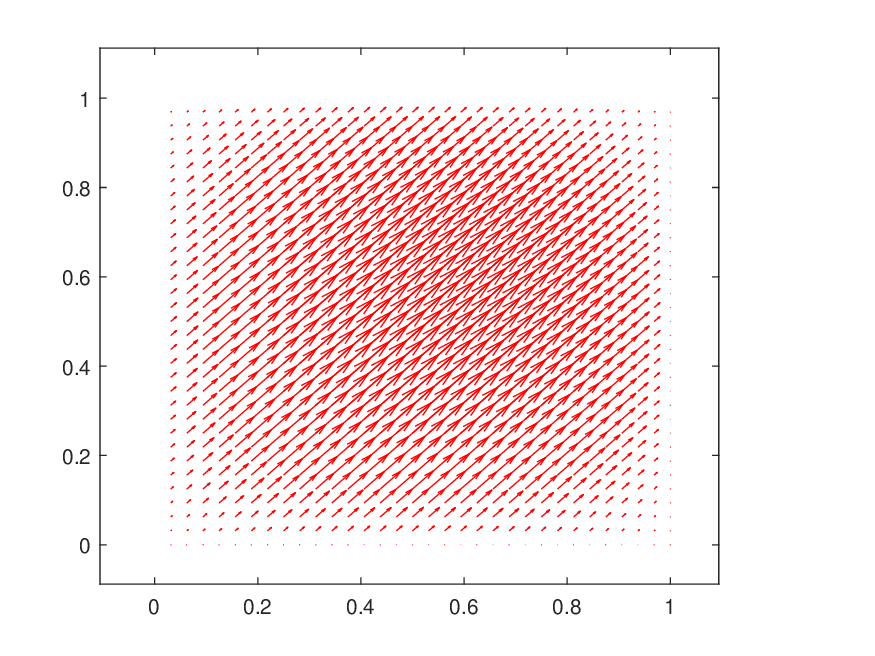}
	\caption{Arrow plot of the computed displacement $\textbf{u}_h^n$.}
	\label{fig8}
\end{figure}
\vspace{-0.8cm}
\begin{figure}[H]
	\centering
	\subfigure[]{
		%\begin{minipage}[t]{0.5\linewidth}
		\centering
		\includegraphics[width=2.5in]{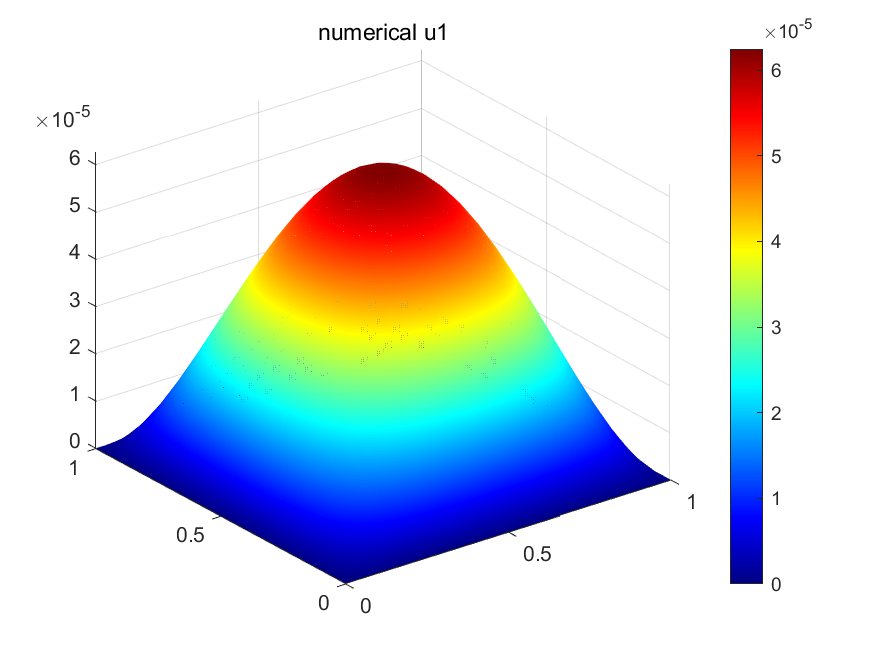}
		%\caption{fig1}
		%\end{minipage}%
		\label{fig9}}%
	\subfigure[]{
		%\begin{minipage}[t]{0.5\linewidth}
		\centering
		\includegraphics[width=2.5in]{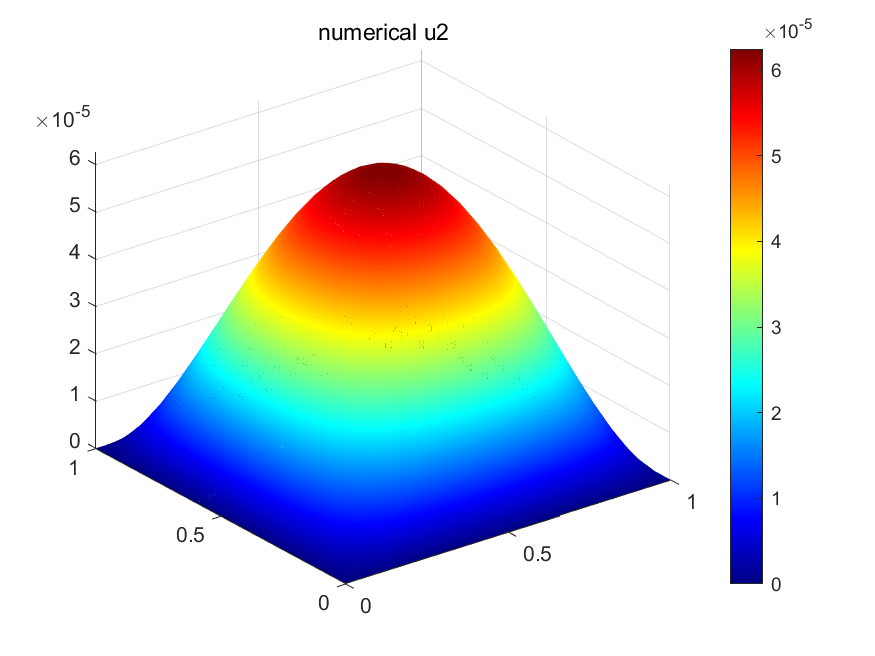}
		%\caption{fig2}
		%\end{minipage}%
		\label{fig10}}%
	\centering
	\caption{ (a) and (b) are Surface plot of $u_{1h}^{n}$ and $u_{2h}^{n}$ at the terminal time $t_f$ respectively.}
	
\end{figure}
Table \ref{table2} displays the $L^{\infty}(0,t_f;L^2(\Omega))$ and $L^{\infty}(0,t_f;H^1(\Omega))$-norm errors of $\textbf{u}$, $p$ and $T$ and shows that the convergence order with respect to $h$ is optimal, which verify the Theorem \ref{th-3-6} and Table \ref{table211} give the convergence order with respect to $\Delta t$ is optimal when $h=\frac{1}{8}$ and $t_f = 1e-3$.

Figure \ref{fig111} and Figure \ref{fig211} describe the spatial convergence order of $u^{n}_{h}, p^{n}_{h}$, $T^{n}_{h}$.
Figure \ref{fig6}, Figure \ref{fig7} and Figure \ref{fig9} and Figure \ref{fig10} show, respectively, the surface plot of the computed $p^{n}_h$, $T^{n}_h$, $u^{n}_{1h}$ and $u^{n}_{2h}$ at the terminal time $t_f$, Figure \ref{fig8} show the arrow plot of the computed displacement $\textbf{u}$.
 
\textbf{Test 3.} 
In this test, we consider so-call Barry-Mercer's problem (cf.  \cite{Wheeler2007}). The computational domain is $\Omega=[0,1]\times[0,1]$, and $t_f=1$. Barry-Mercer's problem has no source, that is, $\textbf{f}\equiv0$ and $g\equiv0$, we prescribe homogeneous boundary conditions and zero source term and initial condition for the heat problem and takes the following boundary and initial conditions
\begin{eqnarray*}
p=0  \quad &\mathrm{on}&\quad \Gamma_j\times (0,t_f),~j=1,~2,~3,\\
p=p_2 \quad&\mathrm{on}& \quad\Gamma_j\times (0,t_f),~j=4,\\
T=0 \quad &\mathrm{on}&\quad \Gamma_j\times (0,t_f),~j=1,~2,~3,\\
T=T_2 \quad&\mathrm{on}& \quad\Gamma_j\times (0,t_f),~j=4,\\
u_1=0\quad&\mathrm{on}& \quad\Gamma_j\times (0,t_f),~j=1,~3,\\
u_2=0 \quad &\mathrm{on} & \quad\Gamma_j\times(0,t_f),~j=2,~4,\\
\sigma(\pmb{t_f})\bm{n}-\alpha pI\bm{n}=\textbf{f}_1:=(0,\alpha p+\beta T)^{'} \quad &\mathrm{on}&\quad \partial\Omega_{t},\\
\textbf{u}(x,0)=\bm{0},~ p(x,0)=0,~T(x,0)=0 \quad&\mathrm{in}&\quad \Omega,\nonumber
\end{eqnarray*}
where
\begin{equation*}
p_{2}\left(x_{1}, t\right)=\left\{\begin{array}{ll}
\sin t &  { \rm if }\  y \in[0.2,0.8) \times(0, t_f),\\
0 & \text { others. }
\end{array}\right.
T_{2}\left(x_{1}, t\right)=\left\{\begin{array}{ll}

\sin t &  { \rm if }\  y \in[0.2,0.8) \times(0, t_f),\\
0 & \text { others. }
\end{array}\right.
\end{equation*}
\begin{table}[H]
\centering
\caption{ Physical parameters}\label{table9}
\begin{tabular}{c l c }
\hline
Parameter    &\quad Description                              &  \quad Value    \\
\hline
%$\lambda $  &\quad Lam$\acute{e}$ constant                  &  \quad 1e10\\
%$\mu$       &\quad Lam$\acute{e}$ constant                  &  \quad 1e10 \\
$a_0$        &\quad Effective thermal capacity               &  \quad 1e-1 or 1e-10\\
$b_0$        &\quad Thermal dilation coefficient             &  \quad 0 \\
$c_0$        &\quad Constrained specific storage coefficient &  \quad 1e-10 or 1e-1\\
$\alpha$     &\quad Biot-Willis constant                     &  \quad 1 \\
$\beta$      &\quad Thermal stress coefficient              &  \quad 1\\
$a$         &\quad Mutation coefficient                      & \quad    1\\
$b$         & \quad Stress coupling coefficient              & \quad    1\\
$k_0$         &\quad Initial permeability tensor                      &  \quad $0.1I$\\
$\bm{\Theta}$&\quad Effective thermal conductivity           &  \quad 1e-8$I$\\
$E$          &\quad Young's modulus                          &  \quad  2.8e5\\
$\nu$        &\quad Poisson ratio                            &  \quad  0.42\\
\hline
\end{tabular}
\end{table}

\begin{table}[htbp]
	\vspace{-2.0em}
	\begin{center}
		\caption{Error and convergence rates of $u_h^n$, $p_h^n$, $T_h^n$}\label{table150}
		\resizebox{\textwidth}{12mm}{
			\begin{tabular}{ccccccccccccc}
				\hline
				$h$  & $\|e_u\|_{L^2(\Omega)}$  &  CR  &  $\|e_u\|_{H^1(\Omega)}$  &  CR &
				 $\|e_p\|_{L^2(\Omega)}$ & CR  &  $\|e_p\|_{H^1(\Omega)}$  &  CR  &  $\|e_T\|_{L^2(\Omega)}$  &  CR  & $\|e_T\|_{H^1(\Omega)}$ & CR \\ 
				\hline
				$1/4$   &4.7823e-14& &8.8956e-13& &6.9291e-10& &1.1737e-08& &6.9291e-10& &1.1737e-08&  \\
				$1/8$  &5.0556e-14&0.9459 &4.5739e-13&1.9448 &3.7452e-10&1.8501 &1.4050e-08&0.8353 &3.7452e-10&1.8501 &1.4050e-08&0.8353   \\
				$1/16$  &2.2291e-14&2.2680 &2.3849e-13&1.9179 &1.7965e-10&2.0847 &1.6183e-08&0.8682 &1.7965e-10&2.0847 &1.6183e-08&0.8682  \\
				$1/32$  &1.1110e-14&2.0064 &1.2933e-13&1.8441 &8.3189e-11&2.1595 &1.1700e-08&1.3831 &8.3189e-11&2.1595 &1.1700e-08&1.3831  \\
				\hline
		\end{tabular}}
	\end{center}
\end{table}

 Since we lack a true analytical solution, we used a small spatial step size of 1/180 to generate a reference solution, and then solved the numerical solution with different spatial step sizes, and calculated the error between them and the reference solution. From table \ref{table150} by gradually reducing the spatial step size, we found that the error also decreased according to a certain rate, and the rate of decrease remained stable. This shows that our algorithm can effectively solve the Barry problem, and has good convergence and stability.
\begin{figure}[H]
	\centering
\subfigure[]{
\centering
\includegraphics[width=2.3in]{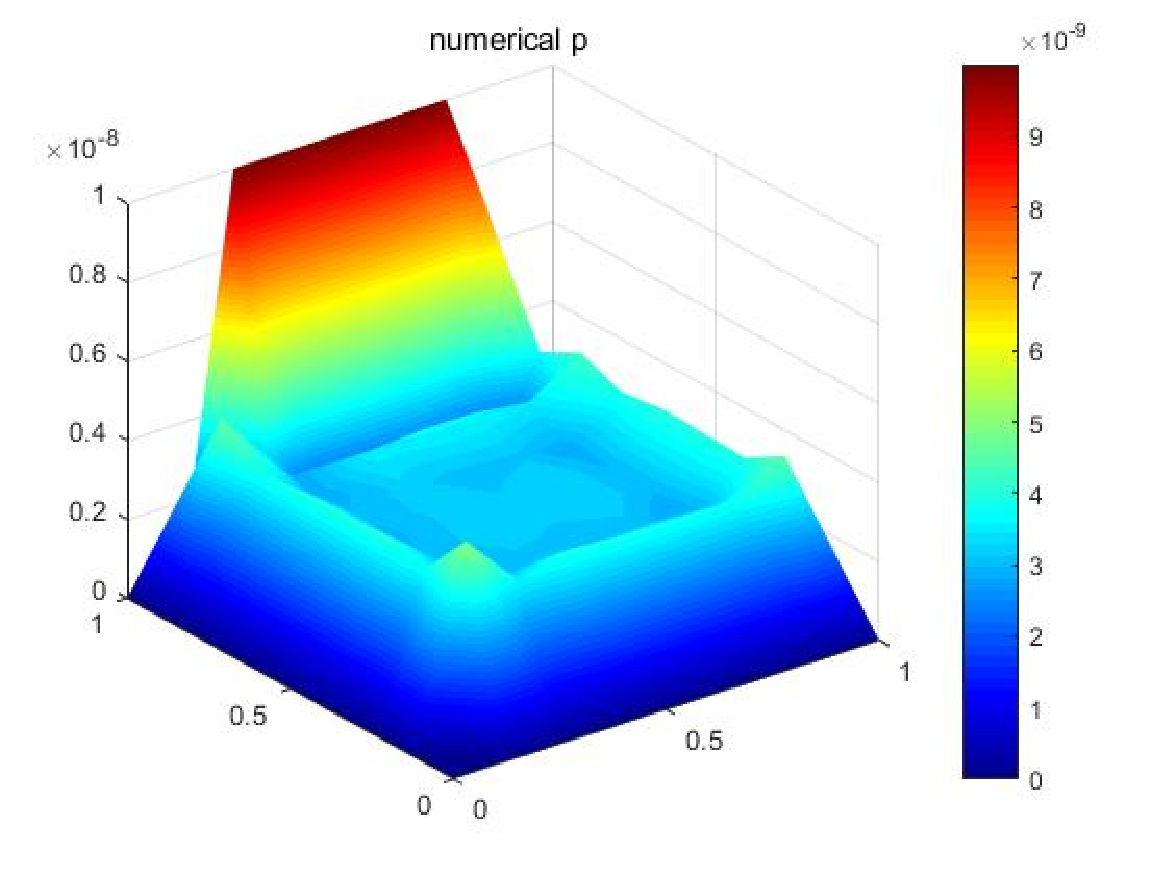}
%\caption{Surface plot of the computed pressure $p_h^n$ at the terminal time $W$.}
\label{fig11}
}
\subfigure[]{
\centering
\includegraphics[width=2.3in]{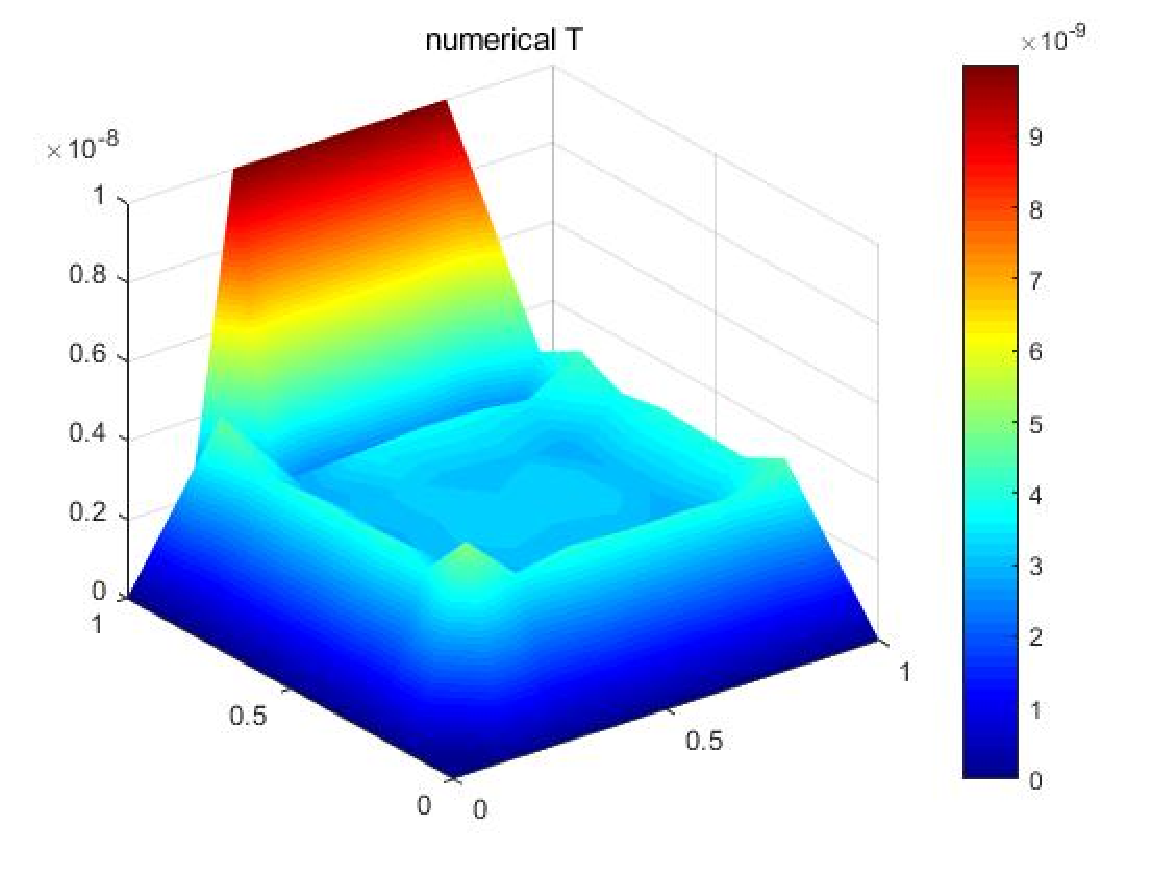}
%\caption{Surface plot of the computed temperature $T_h^n$ at the terminal time $W$.}
\label{fig12}}\caption {The numerical results by using $P_2-P_1-P_1$ element pair for the variables of  $\textbf{u}, p$ and $T$ of the problem \reff{1-1}--\reff{1-3}: (a) locking in pressure field, (b) locking in temperature field.}
\end{figure}

\begin{figure}[htbp]
	\centering
\subfigure[]{
\centering
\includegraphics[width=2.3in]{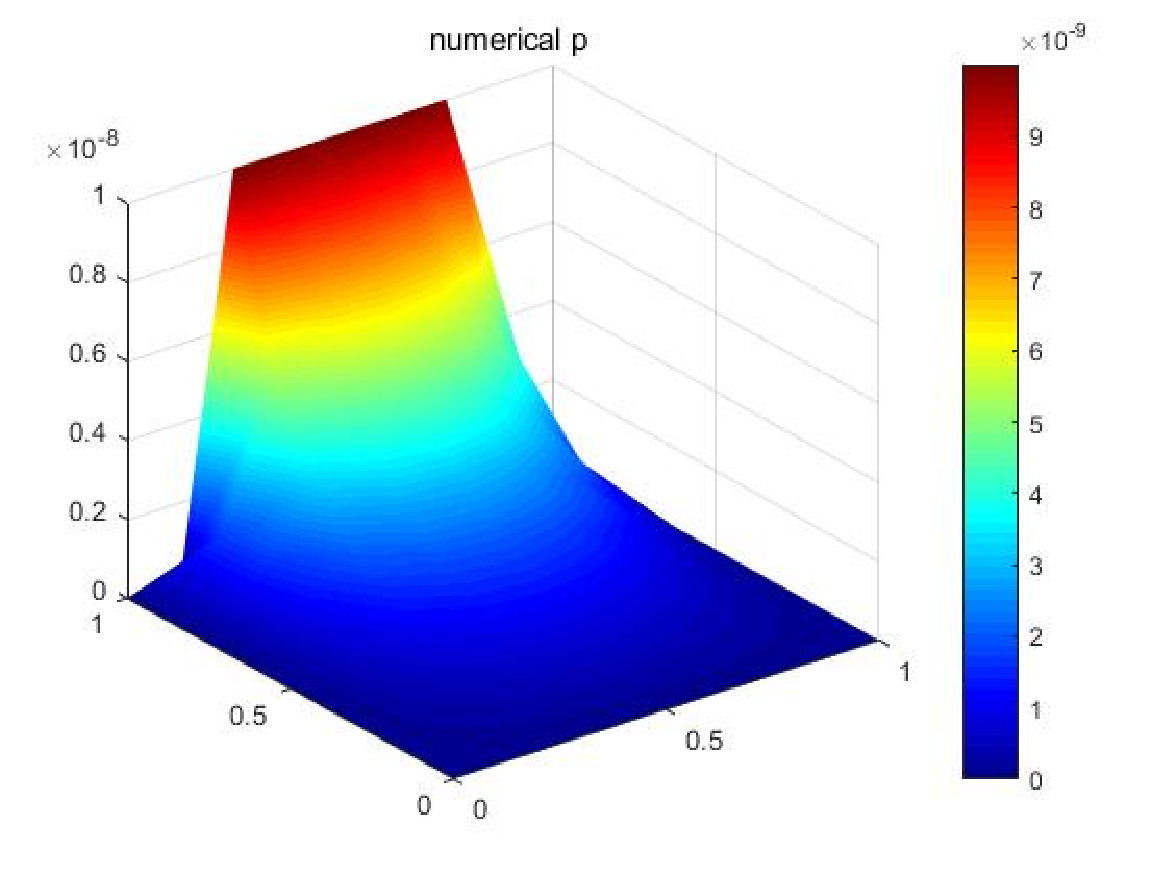}
%\caption{Locking in the pressure field is prevented in reformulation model despite the fact that $c_{0}=0, b_{0}=0$, the permeability is very small and we made an observation at a very short time ($\Delta t = 01$).}
\label{fig19}
}
\subfigure[]{
\centering
\includegraphics[width=2.3in]{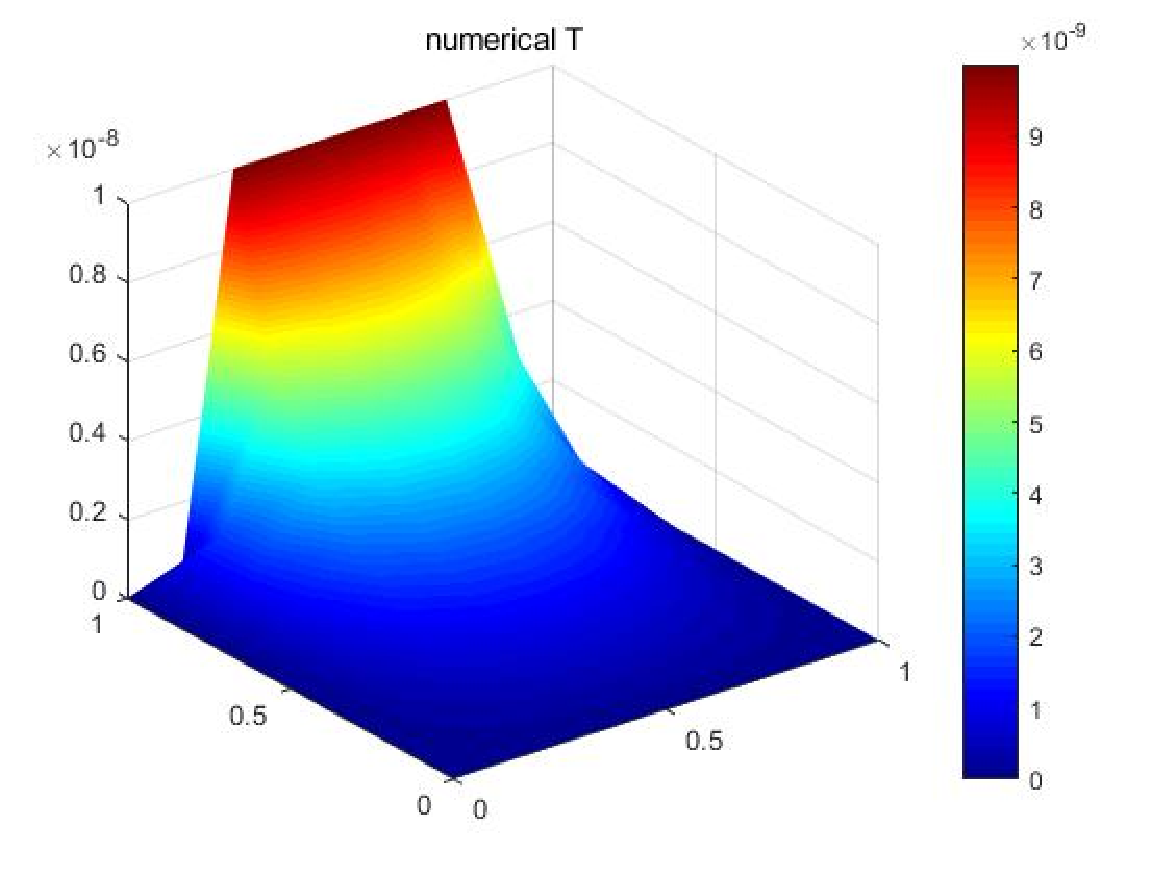}
%\caption{Locking in the temperature field is prevented in reformulation model despite the fact that $a_{0}=0, b_{0}=0$, the thermal conductivity is very small and we made an observation at a very short time ($\Delta t = 01$.}
\label{fig20}}
\caption{The numerical results by using $P_2-P_1-P_1-P_1$ element pair for the variables of  $\textbf{u},\tau,\varpi$ and $\varsigma$ of the proposed MFEA: (a) no locking in the pressure field, (b) no locking in the temperature field.}
\end{figure}
\begin{figure}[H]
\centering
\subfigure[]{
%\begin{minipage}[t]{0.5\linewidth}
\centering
\includegraphics[width=2.5in]{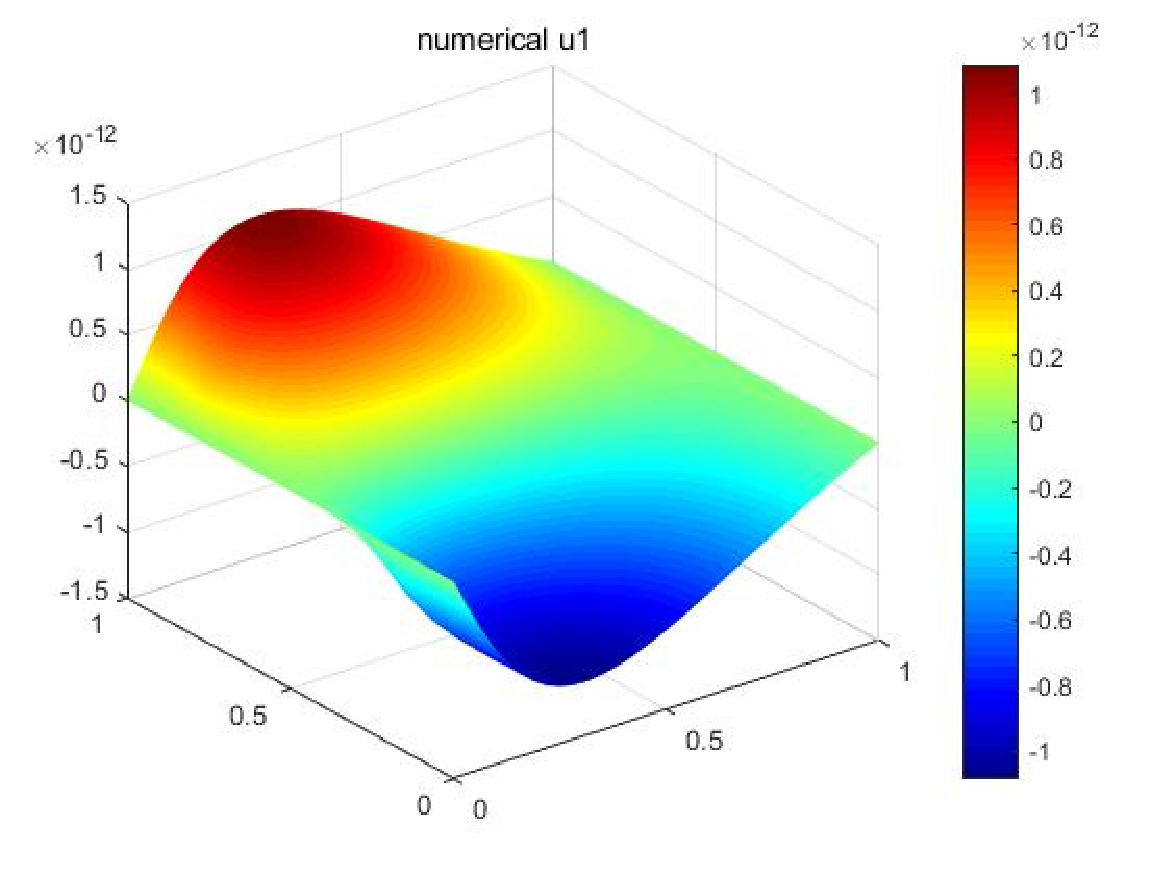}
%\caption{fig1}
%\end{minipage}%
\label{fig117}}%
\subfigure[]{
%\begin{minipage}[t]{0.5\linewidth}
\centering
\includegraphics[width=2.5in]{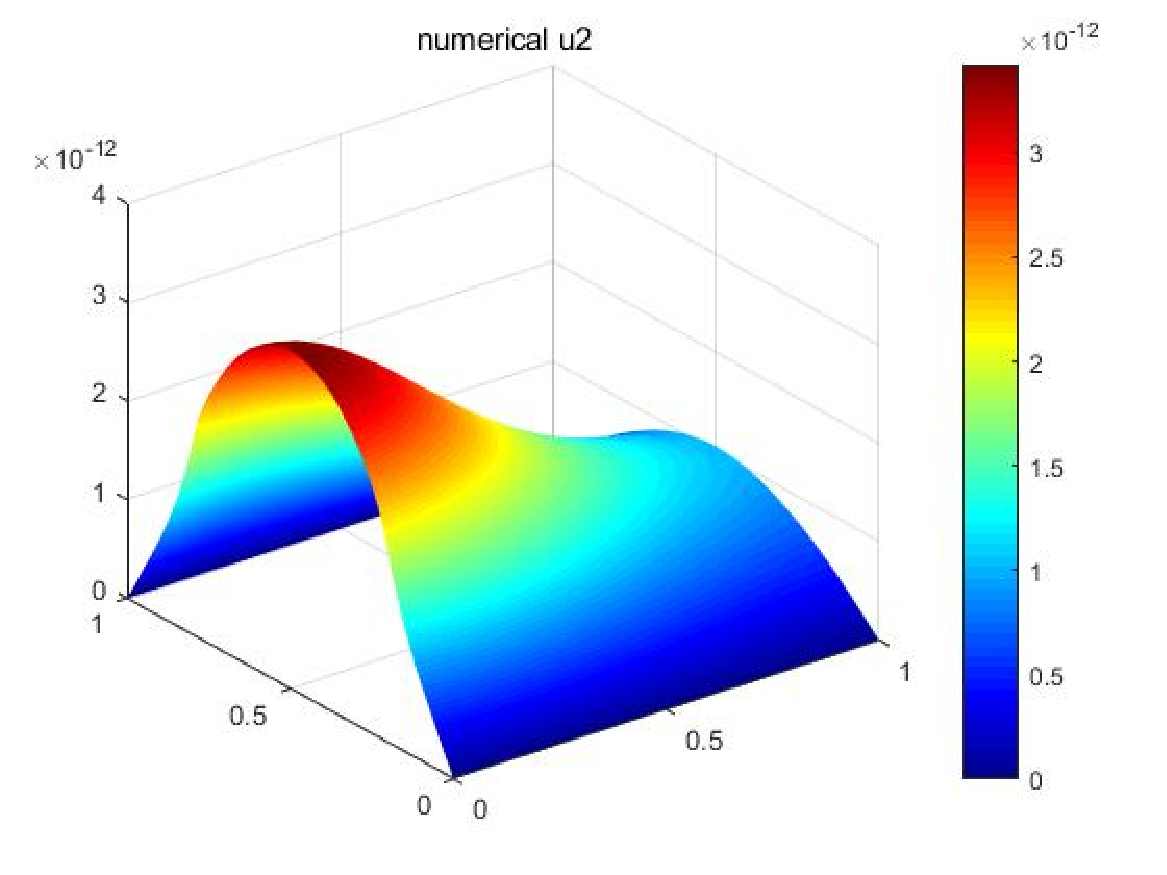}
%\caption{fig2}
%\end{minipage}%
\label{fig118}}%
\centering
\caption{ (a) and (b) are the surface plot of  $u_{1h}^{n}$ and $u_{2h}^{n}$  at time $t_f$, respectively.}
\end{figure}

\begin{figure}[H]
\centering
\includegraphics[height=5cm,width=7cm]{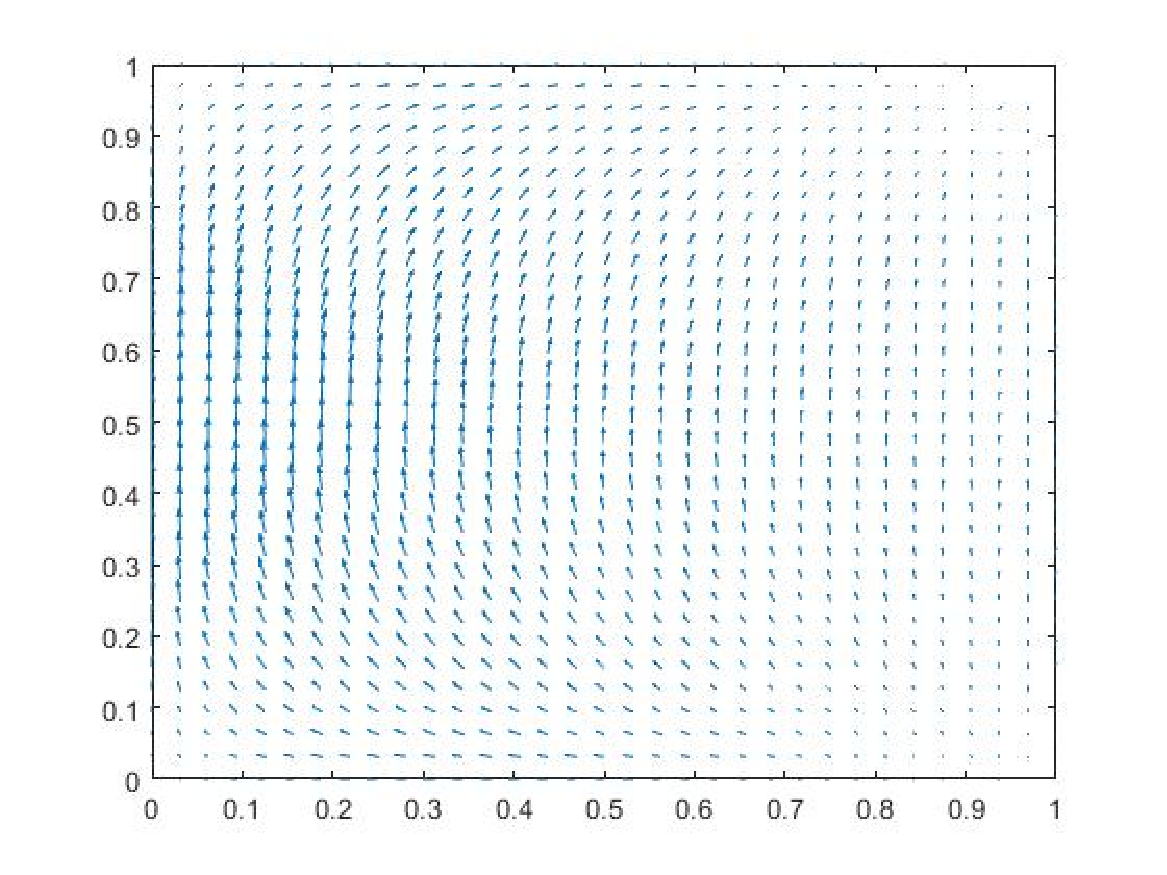}
\caption{Arrow plot of the displacement $\textbf{u}_h^n$.}
\label{fig119}
\end{figure}

Figure \ref{fig11} shows that pressure oscillations  occur by using $P_2-P_1-P_1$ element pair for the variables of  $\textbf{u}, p$ and $T$ of the problem \reff{1-1}--\reff{1-3} when $c_{0}=0, b_{0}=0$, and the permeability is very small for very short times, Figure \ref{fig12} shows that temperature oscillations occur by using $P_2-P_1-P_1$ element pair for the variables of  $\textbf{u}, p$ and $T$ of the problem \reff{1-1}--\reff{1-3} when $a_{0}=0, b_{0}=0$, and the thermal conductivity is very small for very short times. From Figure \ref{fig19} and Figure \ref{fig20}, we see that there is no locking phenomenon, which confirms that our approach and numerical methods have a built-in mechanism to prevent the "locking phenomenon". Figure \ref{fig117} and Figure \ref{fig118} display the surface plot of $u^{n}_{1h}$ and $u^{n}_{2h}$ at the terminal time $t_f$, Figure \ref{fig119} show the arrow plot of the computed displacement $\textbf{u}^{n}_{h}$.\\

\textbf{Test~4.}
To check the function $k$ influence the result of the Barry problem we set $k_0$ as a fixed number, let's  coupling coefficient $b $ in $ak_{0}e^{-b \left((\lambda+\mu)\mathrm{div} \mathbf{u}-\alpha p- \beta T\right )}$ be $ 1e-2,1,1e2$ in turn and for $p,T$, then we use the following boundary conditions
\begin{eqnarray*}
	p=0  \quad &\mathrm{on}&\quad \Gamma_j\times (0,t_f),~j=1,~2,~4,\\
	p=p_2 \quad&\mathrm{on}& \quad\Gamma_j\times (0,t_f),~j=3,\\
	T=0 \quad &\mathrm{on}&\quad \Gamma_j\times (0,t_f),~j=1,~2,~4,\\
	T=T_2 \quad&\mathrm{on}& \quad\Gamma_j\times (0,t_f),~j=3,
\end{eqnarray*}
and the other conditions are same as the forth test \ref{test3}.
\begin{figure}[H]
	\centering
	\subfigure[]{
		%\begin{minipage}[t]{0.5\linewidth}
		\centering
		\includegraphics[width=2.5in]{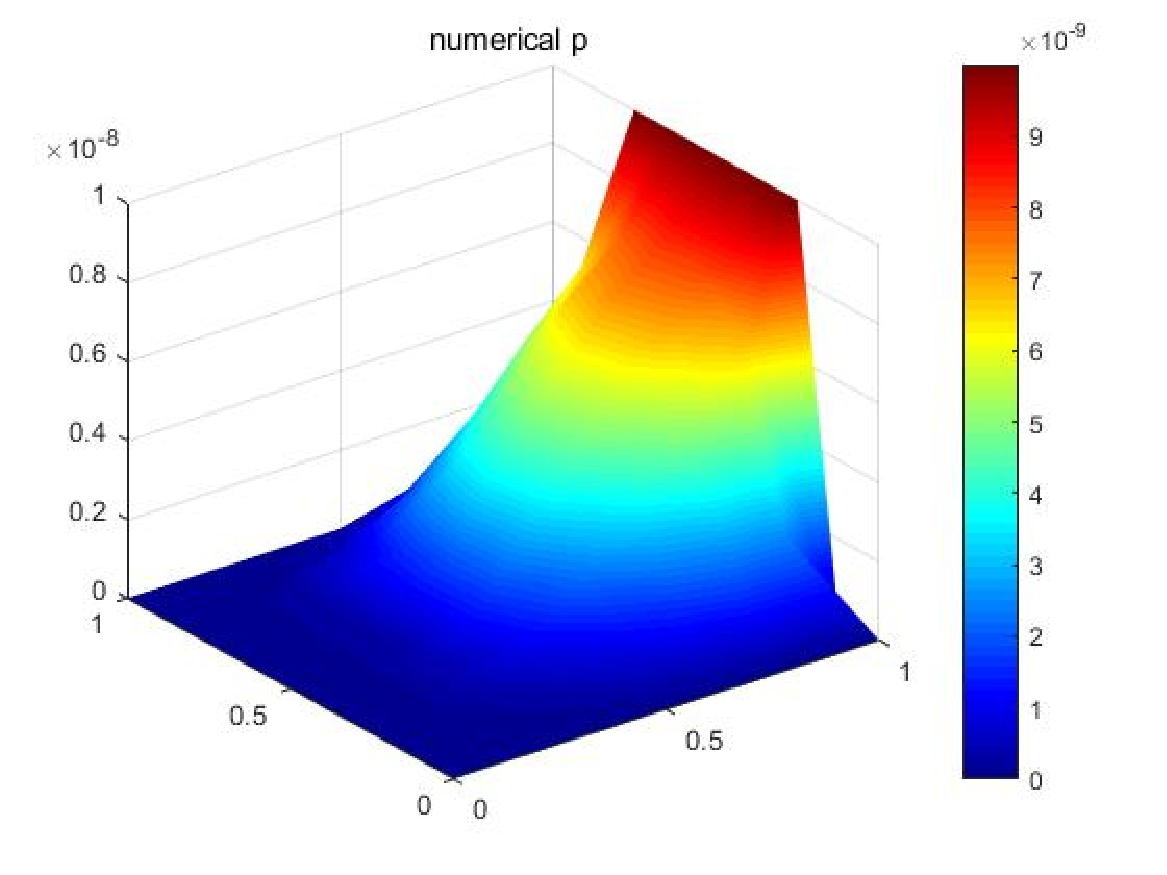}
		%\caption{fig1}
		%\end{minipage}%
		\label{fig120}}%
	\subfigure[]{
		%\begin{minipage}[t]{0.5\linewidth}
		\centering
		\includegraphics[width=2.5in]{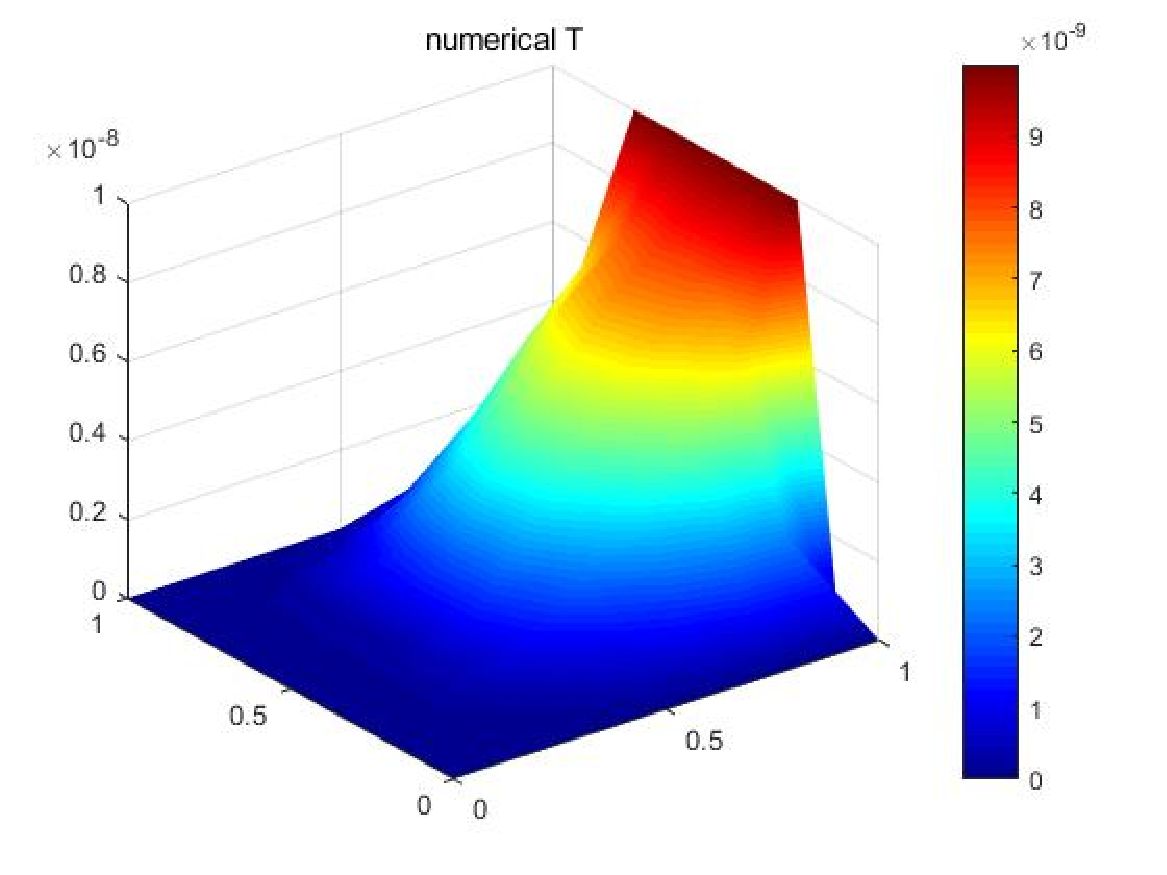}
		%\caption{fig2}
		%\end{minipage}%
		\label{fig121}}%
	\centering
	\caption{(a) and (b) are the surface plot of  $p_{h}^{n}$ and $T_{h}^{n}$  at time $t_f$ when $b$=1e-2.}
\end{figure}
\begin{figure}[H]
	\centering
	\subfigure[]{
		%\begin{minipage}[t]{0.5\linewidth}
		\centering
		\includegraphics[width=2.5in]{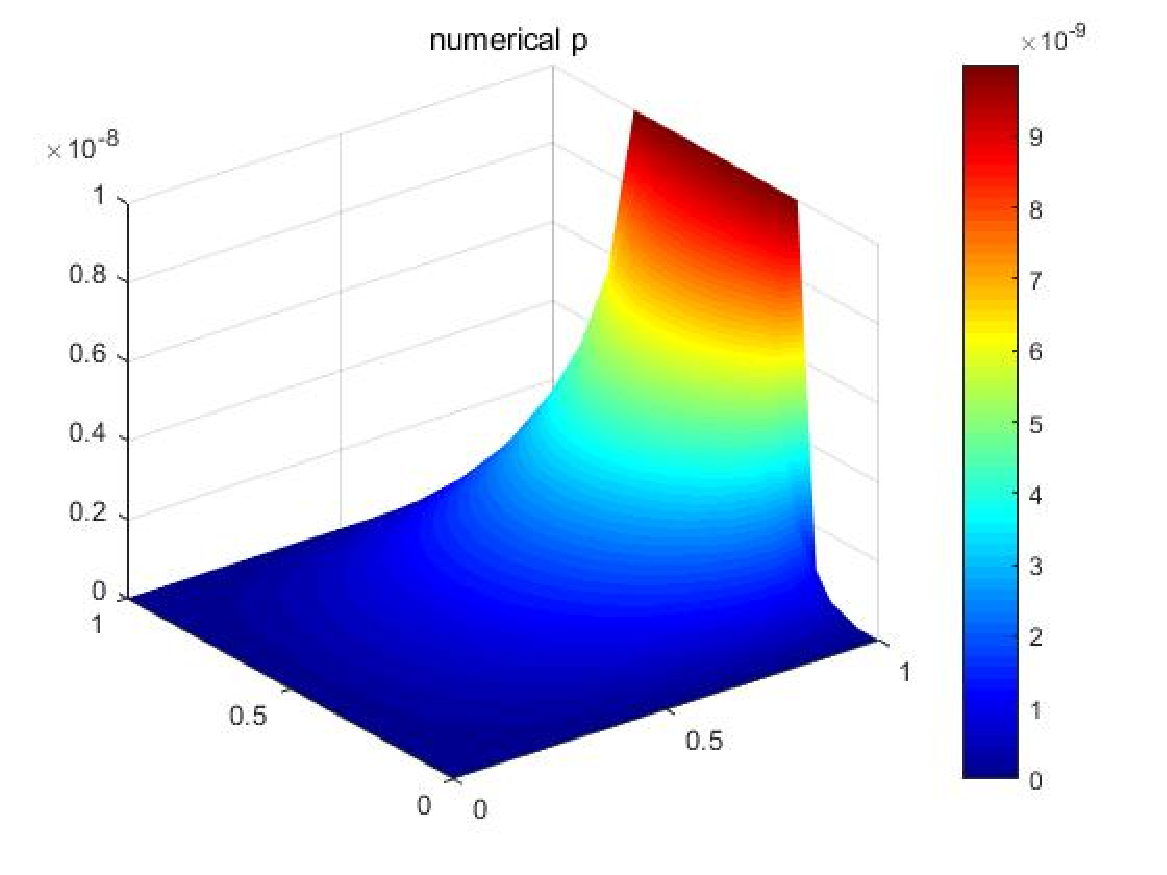}
		%\caption{fig1}
		%\end{minipage}%
		\label{fig122}}%
	\subfigure[]{
		%\begin{minipage}[t]{0.5\linewidth}
		\centering
		\includegraphics[width=2.5in]{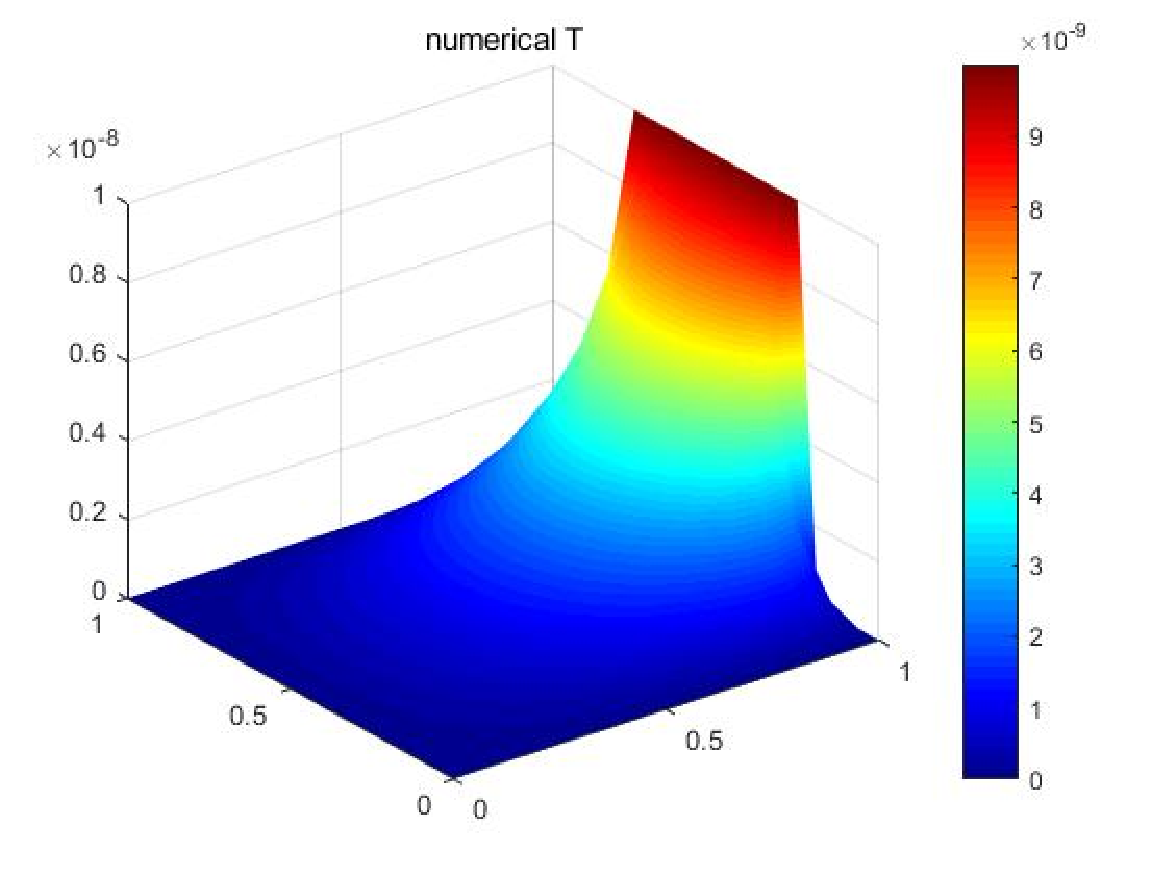}
		%\caption{fig2}
		%\end{minipage}%
		\label{fig123}}%
	\centering
	\caption{(a) and (b) are the surface plot of  $p_{h}^{n}$ and $T_{h}^{n}$  at time $t_f$ when $b$=1.}
\end{figure}
\begin{figure}[H]
	\centering
	\subfigure[]{
		%\begin{minipage}[t]{0.5\linewidth}
		\centering
		\includegraphics[width=2.5in]{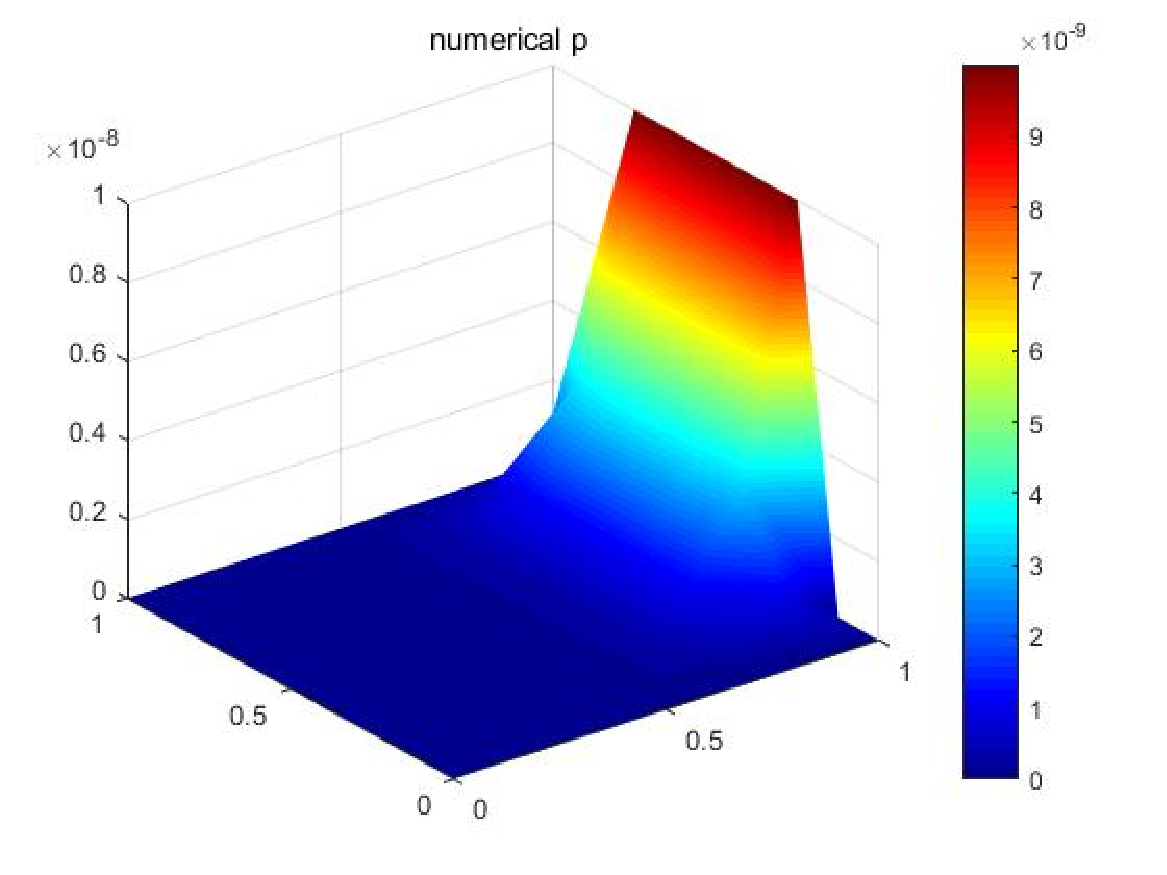}
		%\caption{fig1}
		%\end{minipage}%
		\label{fig124}}%
	\subfigure[]{
		%\begin{minipage}[t]{0.5\linewidth}
		\centering
		\includegraphics[width=2.5in]{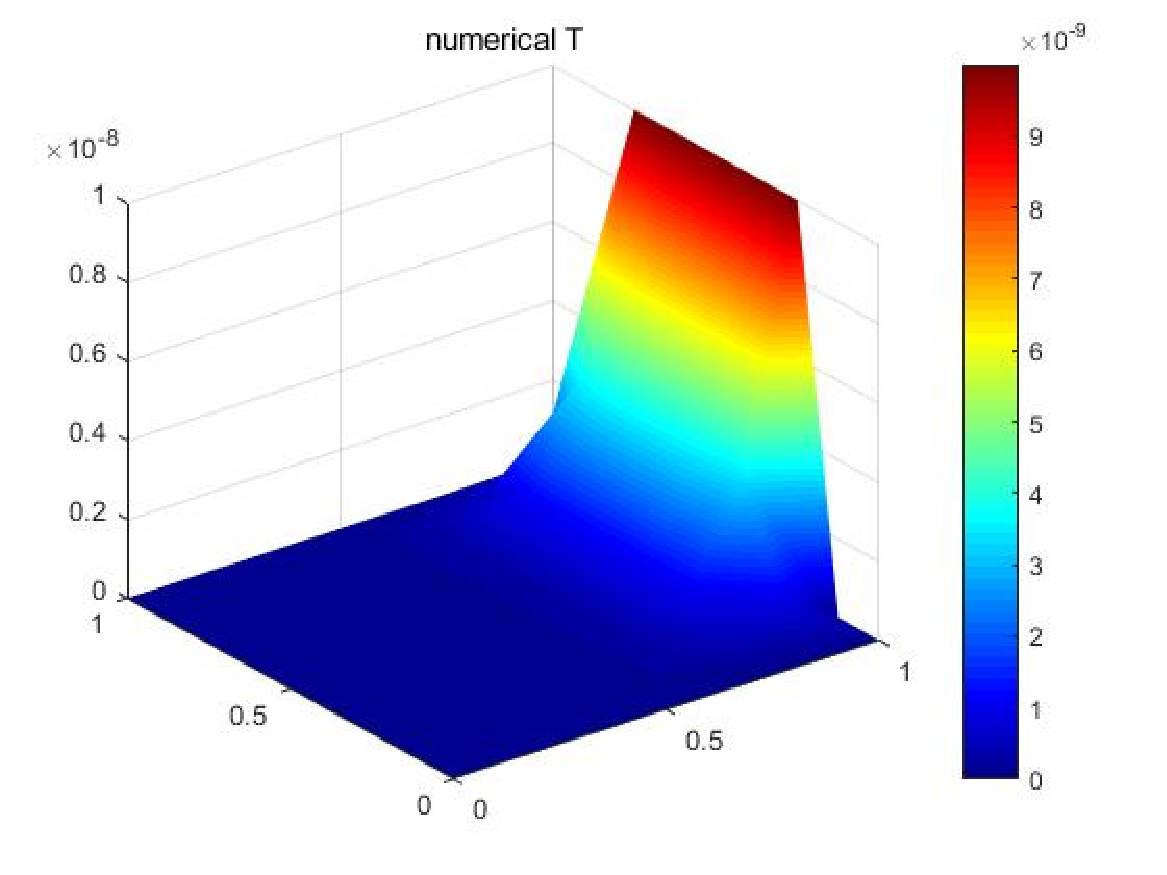}
		%\caption{fig2}
		%\end{minipage}%
		\label{fig125}}%
	\centering
	\caption{(a) and (b) are the surface plot of  $p_{h}^{n}$ and $T_{h}^{n}$  at time $t_f$ when $b$=1e2.}
\end{figure}

According to the observation of the six images above, we can find that the coupling coefficient b has a significant effect on the pressure and temperature, and it can change the variation speed and trend of the pressure and temperature with time. The coupling coefficient b is a physical quantity that describes the interaction strength between two systems, and it has applications in electromagnetism, mechanics, quantum mechanics and other fields. In this experiment, the coupling coefficient $b$ may reflect different physical processes, such as heat conduction, phase transition, etc. By analyzing the effect of the coupling coefficient b on the pressure and temperature, we can better understand the nature and laws of these physical processes.
\section{Conclusion}
In this paper, we studies the  thermo-poroelasticity model of considering the non-linearity of the permeability $k$ according to the pressure, temperature, and pressure; provides an effective and reliable numerical calculation method for the thermo-fluid-solid interaction problem in porous media, which has important significance for guiding practical engineering practice. 

The main contributions of this paper:
 By introducing 3 new variables to reformulate the original model, we reveal the action process of the medium in the thermo-poroelasticity model  more clearly, as well as the mutual influence of physical phenomena such as fluid flow, temperature distribution, deformation and stress. We introduce the
 $B$-operator to simplify the model, and transform it into an abstract nonlinear parabolic system, then use the
 improved Rother¡¯s method to prove the existence and uniqueness of the weak solution. We propose a new multiphysics finite element method based on the reformulated model, which can solve two subproblems  respectively,  reducing the difficulty of solving the nonlinear terms. Meanwhile, by reformulation,  the problem is no longer affected by the saddle point, thus we can choose suitable finite element pair to handle variables according to the actual situation. Then we give the optimal error convergence order of the variables under any finite element pair, and verify it in numerical experiments. Our method provides a new idea and tool for the thermo-poroelasticity problem, and also provides a reliable numerical simulation scheme for practical problems such as groundwater flow and land subsidence.

%The following aspects can be further studied:
%
%$\bullet$ Considering the influence of $k$ non-linearity on nonlinear convection term model, analyzing the influence of $k$ non-linearity on the existence, uniqueness, stability and convergence of solution.
%
%$\bullet$ Considering other types of nonlinear factors, such as temperature-dependent density, viscosity, etc., and other types of boundary conditions, such as Robin boundary condition, periodic boundary condition, etc.
%
%$\bullet$ Extending the method proposed in this paper to three-dimensional situations, as well as more complex porous media shapes and structures.
%
%$\bullet$ Applying the method proposed in this paper to other fields of problems, such as groundwater flow, seepage mechanics, geomechanics, etc.

\end{document}